\documentclass[a4paper]{amsart} 

\usepackage[colorlinks,linkcolor=blue,citecolor=blue,urlcolor=blue]{hyperref}
\usepackage[english]{babel}
\usepackage{amsthm,amsfonts,amssymb,mathrsfs,enumerate,amsmath}
\usepackage{xcolor}
\usepackage[small,nohug,heads=vee]{diagrams}
\diagramstyle[labelstyle=\scriptstyle]

\usepackage{mathtools}						
\mathtoolsset{showonlyrefs,showmanualtags}	

\usepackage{todonotes}  


\newtheorem{Theorem}{Theorem}[section]
\newtheorem{Coro}[Theorem]{Corollary}
\newtheorem{Lemma}[Theorem]{Lemma}
\newtheorem{Prop}[Theorem]{Proposition}

\theoremstyle{definition}
\newtheorem{Def}[Theorem]{Definition}

\theoremstyle{remark}
\newtheorem{rk}[Theorem]{Remark}


\renewcommand{\aa}{\alpha}


\newcommand{\eps}{\varepsilon}



\newcommand{\R}{\mathbb{R}}

\newcommand{\N}{\mathbb{N}}


%
\numberwithin{equation}{section}

\title[On projective and affine equivalence of sub-Riemannian metrics]{On projective and affine equivalence of sub-Riemannian metrics}
\date{\today}

\thanks{This work was supported by a public grant as part of the
Investissement d'avenir project, reference ANR-11-LABX-0056-LMH, LabEx
LMH, in a joint call with Programme Gaspard Monge en Optimisation et
Recherche Op\'{e}rationnelle, by the iCODE
Institute project funded by the IDEX Paris-Saclay, ANR-11-IDEX-0003-02 and by the Grant ANR-15-CE40-0018 of the ANR. I.\ Zelenko is partly supported by NSF grant DMS-1406193 and Simons Foundation Collaboration Grant for Mathematicians 524213.}


\author{Fr\'ed\'eric Jean}
\address{Fr\'ed\'eric Jean \\
Unit\'{e} de Math\'{e}matiques Appliqu\'{e}es, ENSTA ParisTech, Universit\'{e} Paris-Saclay,
91120 Palaiseau, France}
\email{frederic.jean@ensta-paristech.fr}
\urladdr{\url{http://uma.ensta-paristech.fr/~fjean}}


\author{Sofya Maslovskaya}
\address{Sofya Maslovskaya \\
Unit\'{e} de Math\'{e}matiques Appliqu\'{e}es, ENSTA ParisTech, Universit\'{e} Paris-Saclay,
91120 Palaiseau, France}
\email{sofya.maslovskaya@ensta-paristech.fr}


\author{Igor Zelenko}
\address{Igor Zelenko\\
         Department of Mathematics\\
         Texas A\&M University\\
         College Station\\
         Texas \ 77843\\
         USA}
\email{zelenko@math.tamu.edu}
\urladdr{\url{http://www.math.tamu.edu/~zelenko}}

\begin{document}

\begin{abstract}
Consider a smooth connected manifold $M$ equipped with a bracket generating distribution $D$. Two sub-Riemannian metrics on $(M,D)$ are said to be \emph{projectively} (resp.\ \emph{affinely}) \emph{equivalent} if they have the same geodesics up to reparameterization (resp.\ up to affine reparameterization).
A sub-Riemannian metric $g$ is called \emph{rigid} (resp.\ \emph{conformally rigid}) with respect to projective/affine equivalence, if any sub-Riemannian metric  which is projectively/affinely equivalent to $g$ is constantly proportional to $g$ (resp.\ conformal to $g$).
In the Riemannian case the local classification of projectively (resp.\ affinely) equivalent metrics was done in the classical work \cite{Levi-Civita1896} (resp.\ \cite{Eisenhart1923}).
In particular, a Riemannian metric which  is not rigid with respect to one of the above equivalences satisfies the following two special properties: its geodesic flow  possesses a collection of  nontrivial integrals of special type and the metric induces certain canonical product structure on the ambient manifold.
The only proper sub-Riemannian cases to which these classification results were extended so far  are sub-Riemannian metrics on contact and quasi-contact distributions \cite{Z}.
The general goal is to extend these results to arbitrary sub-Riemannian manifolds. In this article we establish two types of results toward this goal:
if a sub-Riemannian metric is not conformally rigid with respect to the projective equivalence, then, first, its flow of normal extremals has at least one nontrivial integral quadratic on the fibers of the cotangent bundle and, second, the nilpotent approximation of the underlying distribution  at any point admits a product structure. As a consequence we obtain  two types of genericity results for rigidity:
first, we show that a generic sub-Riemannian metric on a fixed pair $(M,D)$ is conformally rigid with respect to projective equivalence. Second, we prove that, except for special pairs $(m,n)$, for a generic distribution $D$ of rank $m$ on an $n$-dimensional manifold, every sub-Riemannian metric on $D$ is conformally rigid with respect to the projective equivalence. For the affine equivalence in both genericity results conformal rigidity can be replaced by usual rigidity.
\end{abstract}

\maketitle

\tableofcontents

\section{Introduction}
In Riemannian geometry, projectively (or geodesically) equivalent metrics are Riemannian metrics on the same manifold which have the same geodesics, up to reparameterization. The study of equivalent metrics dates back to the works of Dini and Levi-Civita in the 19th century. The interest in this notion of equivalence is renewed by recent applications of optimal control theory to the study of human motor control. Indeed, finding the optimality criterion followed by a particular human movement amounts to solve what is called an \emph{inverse optimal control problem} (see for instance \cite{berret2008,chittaro2013}): given a set $\Gamma$ of trajectories and a class of optimal control problems -- that is, a pair (dynamical constraint, class $\mathcal{L}$ of infinitesimal costs) --, identify a cost function $L$ in
$\mathcal{L}$ such that the elements of $\Gamma$ are minimizing trajectories of the optimal control problem associated with the integral cost $\int L$. Being the solutions of a same inverse problem defines an equivalence between costs in $\mathcal{L}$ similar to projective equivalence for Riemannian metrics.
Our purpose here is to extend and study this kind of equivalence in the context of sub-Riemannian geometry. This is a first step in the direction of a more general goal, which is to give a rigorous theoretical framework to the study of inverse optimal control problems.

A sub-Riemannian manifold is a triple $(M,D,g)$, where $M$ is a smooth manifold, $D$ is a distribution on $M$ (i.e.\ a subbundle of $TM$) which is assumed to be bracket generating, and $g$ is a Riemannian metric on $D$. We say that $g$ is a sub-Riemannian metric on $(M,D)$. Riemannian geometry appears as the particular case where $D=TM$. A horizontal curve is an absolutely continuous curve tangent to $D$, and for such a curve $\gamma$  the length and the energy are defined as in Riemannian geometry by respectively $\ell(\gamma) = \int \sqrt{g(\dot{\gamma},\dot{\gamma})}$ and $E(\gamma) = \int g(\dot{\gamma},\dot{\gamma})$. A length minimizer (resp.\ an energy minimizer) is a horizontal curve minimizing the length (resp.\ the energy) among all the horizontal curves with the same extremities.

The length being independent on the parameterization of the curve, any time reparameterization of a length minimizer is still a length minimizer. On the other hand, a classical consequence of the Cauchy--Schwarz inequality is that the energy minimizers are the length minimizers with constant velocity, i.e.\ such that $g(\dot{\gamma},\dot{\gamma})$ is constant along $\gamma$.  It is then sufficient to describe the energy minimizers, the length minimizers being any time reparameterization of the latter.

It results from the Pontryagin Maximum Principle that energy minimizers are projections of Pontryagin extremals, and can be of two types, normal or abnormal geodesics. These geodesics play a role similar to the one of the solutions of the geodesic equation in Riemannian geometry. We thus extend the definition of equivalence of metrics in the following way.

\begin{Def}
Let $M$ be a manifold and $D$ be a bracket generating distribution on $M$. Two sub-Riemannian metrics  $g_1$ and $g_2$ on  $(M,D)$ are called \emph{projectively equivalent} at $q_0\in M$ if they have the same geodesics, up to a reparameterization, in a neighborhood of $q_0$. They are called \emph{affinely equivalent} at $q_0$ if they have the same geodesics, up to affine reparameterization, in a neighborhood of $q_0$.
\end{Def}

In the sequel the manifold $M$ is assumed to be connected.
The trivial example of equivalent metrics is the one of two constantly proportional metrics $g$ and $cg$, where $c>0$ is a real number. We thus say that these metrics are \emph{trivially} (projectively or affinely) equivalent.
Besides, affine equivalence implies projective equivalence but in general the two notions do not coincide. For instance, on $M=\R$, all metrics are projectively equivalent to each other while two metrics are affinely equivalent if and only if they are trivially equivalent.

Note that if two sub-Riemannian metrics on $(M,D)$ have the same set of length minimizers, then they are projectively equivalent. And if they have the same set of energy minimizers, then they are affinely equivalent. This results from the fact that on one hand normal geodesics are locally energy minimizers, and on the other hand abnormal geodesics are characterized only by the distribution $D$. Thus projective and affine equivalence are appropriate notions to study inverse optimal control problems where the dynamical constraint is $\dot \gamma \in D$ and the class $\mathcal{L}$ is the set of sub-Riemannian metrics. In particular, they allow one to answer to the following questions: given $M$ and $D$, can we recover $g$ in a unique way, up to a multiplicative constant, from the knowledge of all energy minimizers of $(M,D,g)$? And from the knowledge of all length minimizers of $(M,D,g)$? The answer to these questions is positive when the metric presents a kind of rigidity.

\begin{Def}
A metric $g$ on $(M,D)$ is said to be \emph{projectively rigid} (resp.\ \emph{affinely rigid}) if it admits no non-trivially projectively (resp.\ affinely) equivalent metric.
\end{Def}

We also introduce a weaker notion of rigidity associated with the concept of conformal metrics. Remind that a metric $g_2$ on $(M,D)$ is said to be \emph{conformal} to another metric $g_1$ if $g_2 = \alpha^2 g_1$ for some nonvanishing smooth function $\alpha: M \to \R$. The trivial case of constantly proportional metrics is the particular case where $\alpha$ is constant. Note that two conformal metrics are not projectively  equivalent in general (we actually conjecture that the latter situation occurs only when either $\dim M=1$ or the metrics are constantly proportional to each other).

\begin{Def}
A metric $g$ is said to be \emph{conformally projectively rigid} if any metric projectively equivalent to $g$ is conformal to $g$.
\end{Def}

It is easy to construct examples of metrics which are not projectively rigid. For example,  an Euclidean metric on a plane provides such an example. Indeed, its geodesics consist of straight lines. Take the Riemannian metrics on the same plane obtained by the pull-back from the round  metric on a sphere placed on this plane via the (inverse of) gnomonic projection, i.e. the stereographic projection with the center in the center of the sphere. Obviously the geodesic of this  metric are straight lines as  unparameterized curves geodesics but this metric is not constantly proportional to the original metric, because it has  nonzero constant Gaussian curvature (see \cite[Sect. 3.1]{jmz2017}). Note also that by a classical theorem by Beltrami \cite{Beltrami1869}, the metrics with constant sectional curvature are the only ones projectively equivalent to the flat ones.

If one extends the notion of equivalence to Lagrangians, then one arrives to the variational version of Hilbert's fourth problem in dimension $2$, which was solved by Hamel \cite{Hamel1903} and provides a very rich class of Lagrangians having straight lines as extremals.

Affine and projective equivalence of Riemannian metrics are actually both classical. From the results of Dini \cite{Dini1870}, it follows that under natural regularity assumptions a two-dimensional Riemannian metric is non projectively rigid if and only if it is a Liouville surface, i.e.,
its geodesic flow admits a non-trivial integral which is quadratic with respect to the velocities. This implies that generic Riemannian metrics on surfaces are projectively rigid. In \cite{Levi-Civita1896}, again under natural regularity assumptions, Levi-Civita proved that the same result holds for Riemannian metrics on manifolds of arbitrary dimensions and provided a classification of locally projectively equivalent Riemannian metrics. The affinely equivalent Riemannian metrics are exactly the metrics with the same Levi-Civita connection and the description of the pairs of Riemannian metrics with this property can be attributed to Eisenhart \cite{Eisenhart1923}. This description is also closely related to the de Rham decomposition of a Riemannian manifold and  the properties of its holonomy group \cite{deRham1952}.

These classical results in Riemannian case implies in particular that a Riemannian metric that is not rigid with respect to one of the above equivalences satisfies the following two special properties.

\begin{enumerate}
\item {\bf Integrability property.} Its geodesic flow  possesses a collection of nontrivial integrals quadratic on the fiber  and  in involution.
\item {\bf Product structure (or separation of variables) property.} Locally the ambient manifold $M$ is a product of at least two manifolds such that the metric is a product of metrics on the factors in the affine case and a sort of twisted product of Riemannian metrics on the factors in the projective case (for a precise meaning of twisting here see formula \eqref{met1} below).
\end{enumerate}

Note that similar relations between separability of the Hamilton-Jacobi equation on a Riemannian manifold and integrability (existence of Killing tensors) were extensively studied by Benenti \cite{benenti1, benenti2}, while a more conceptual explanation of the integrability property, based on the modern language of symplectic geometry was given by Matveev and Topalov \cite{Matveev-Topalov2003}.

In a proper sub-Riemannian case, the only complete classification of projectively equivalent metrics was done
far more recently by Zelenko \cite{Z} for contact and quasi-contact sub-Riemannian metrics.
The general goal is to extend the above classification results to an arbitrary  sub-Riemannian case. By analogy with the Levi-Civita classification we define a wide class of pairs of sub-Riemannian metrics that are projective equivalent, see Section \ref{se:LCpairs} and Appendix \ref{se:proof_LCpairs}. We call them the \emph{(generalized) Levi-Civita pairs}. Note that the generalized Levi-Civita pairs  satisfy both integrability and product structure properties. It turns out that the result of \cite{Z} about the contact and quasi-contact case can be actually {reformulated} in the following way: under a natural regularity assumption the generalized Levi-Civita pairs are the only pairs of projectively equivalent metrics. The natural question is whether  this is the case for arbitrary sub-Riemannian case, i.e.\ \emph{whether under some natural regularity assumption the generalized Levi-Civita pairs are the only pairs of projectively equivalent metrics.}

In the present paper we make several steps toward answering this question by proving  the following two general results, which are weaker than the integrability and product structure properties formulated above, but support them. The first result is the existence of at least one integral, which supports the integrability property.

\begin{Theorem}
\label{th:integral}
If a sub-Riemannian metric $g$ is not conformally projectively rigid, then its flow of normal extremals has at least one nontrivial (i.e.\ not equal to the sub-Riemannian Hamiltonian) integral quadratic on the fibers.
\end{Theorem}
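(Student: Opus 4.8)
The plan is to exploit the standard Hamiltonian description of normal extremals and relate the two sub-Riemannian metrics $g$ and $g_2$ (assumed projectively equivalent to $g$ but not conformal to it) via the common unparameterized geodesics. Write $h$ and $h_2$ for the sub-Riemannian Hamiltonians on $T^*M$ associated with $g$ and $g_2$; both are fiberwise quadratic forms supported on the annihilator-complement of $D^\perp$. Since the two metrics have the same normal geodesics up to reparameterization, there is a fiberwise relation between the corresponding Hamiltonian vector fields along the common extremals, exactly as in the Riemannian Levi-Civita/Matveev--Topalov picture. The first step is to make this precise: I would introduce, following the Riemannian case, the fiberwise-linear map (a $(1,1)$-tensor on $M$ depending on the covector only through $D$) comparing $g_2$ to $g$, and show that projective equivalence forces a first-order PDE system on this tensor whose ``leading'' algebraic part already produces a candidate integral.

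The key construction is the following. Consider the function $I := h_2$ restricted to (or transported to) the level set $\{h = \text{const}\}$, or more robustly a suitable combination of $h$ and $h_2$ (in the Riemannian case one takes something like $(\det g_2/\det g)^{2/(n+1)}$ times the inverse metric of $g_2$ evaluated on covectors, producing a quadratic integral; the sub-Riemannian analogue must be set up on $D$ only). The second step is to verify that this $I$ is constant along the flow of the normal extremals of $g$: differentiate $I$ along $\vec{h}$ and use that the images under both Hamiltonian flows of a given unparameterized normal geodesic coincide, so the $\vec{h}$-derivative of $h_2$ is controlled by the reparameterization function; choosing the normalization of $I$ kills this term and yields $\{h, I\} = 0$. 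This is the heart of the argument and I expect it to be the main obstacle: in the sub-Riemannian setting $h$ is only a \emph{degenerate} fiberwise quadratic form (it vanishes on $D^\perp$), the Hamiltonian flow is not a geodesic spray in the usual sense, and the normalization factors involving determinants of metrics on $D$ versus on $TM$ have to be handled carefully — one must check that the candidate integral is genuinely globally defined on $T^*M$ and smooth, not merely on the open set where extremals are strongly normal.

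The third step is to prove that $I$ is \emph{nontrivial}, i.e.\ not a constant multiple of $h$. Here is where the hypothesis that $g$ is not conformally projectively rigid is used: if $I = \lambda h$ pointwise on fibers for some function $\lambda$ on $M$, one unwinds the definition of $I$ to conclude that $g_2$ and $g$ induce proportional quadratic forms on $D$ with a point-dependent factor, i.e.\ $g_2 = \alpha^2 g$ for some function $\alpha$, contradicting non-conformality. (If additionally $\alpha$ were forced to be constant we would only contradict non-rigidity, which is why the statement is phrased with conformal rigidity.) Finally I would remark that $I$ being quadratic on the fibers is immediate from its construction as a transported inverse-metric-type object, completing the proof. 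The routine parts — the exact coordinate form of the PDE for the comparison tensor and the determinant normalization — I would relegate to the computations, citing the Riemannian prototype in \cite{Matveev-Topalov2003} and the contact case in \cite{Z} as the guiding templates.
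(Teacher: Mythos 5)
Your overall architecture is the right one in spirit: the paper's integral is indeed a Painlev\'e/Levi-Civita--type quadratic expression, namely $F=\bigl(\prod_{\ell=1}^N\alpha_\ell^2\bigr)^{-2/(N+1)}\mathcal{P}$ with $\mathcal{P}=\sum_{i=1}^m\alpha_i^2u_i^2$, built from the eigenvalues $\alpha_i^2$ of the transition operator comparing $g_2$ to $g$ (Proposition~\ref{1divntegral}), and the non-triviality step is exactly as you say: $F$ is proportional to $h$ only when all eigenvalues coincide, i.e.\ when $g_2$ is conformal to $g$.

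However, the step you yourself flag as ``the heart of the argument and the main obstacle'' is a genuine gap, and it is where essentially all of the paper's work lies. First, the ``fiberwise relation between the Hamiltonian vector fields'' cannot be written down ``exactly as in the Riemannian picture'': a sub-Riemannian geodesic is not determined by its first jet, so even producing a fiber-preserving orbital diffeomorphism $\Phi$ with $\Phi_*\vec h_1=\alpha\,\vec h_2\circ\Phi$ requires the theory of Jacobi curves and ample covectors (Corollary~\ref{le:immersion}, Proposition~\ref{th:orb.to.proj}), and $\Phi$ is not expressible in closed form from the metrics. Second, and more seriously, even with $\Phi$ in hand, $\vec h_1(F)=0$ is \emph{not} a formal consequence of reparameterization invariance plus a clever normalization. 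The paper deduces it from the \emph{first divisibility condition} --- the cubic fiber-polynomial $\vec h_1(\mathcal{P})$ is divisible by the quadratic $\mathcal{P}$ --- which is proved by a degree-counting and irreducibility argument on the infinite linear system $A\widetilde\Phi=b$ satisfied by $\Phi$ (Proposition~\ref{prop:divty}); this divisibility then forces the differential relations $X_i(\alpha_j^2)=\alpha_j^2X_i(\alpha_i^2)/(2\alpha_i^2)$ for $\alpha_i\neq\alpha_j$ (Corollary~\ref{coro:from.div2}), and only with these relations does the prefactor cancel $\vec h_1(\mathcal{P})/\mathcal{P}$. Note finally that the correct normalization involves the exponent $-2/(N+1)$ with $N$ the number of \emph{distinct} eigenvalues and the product taken over distinct eigenvalues only --- not the na\"ive determinant power $2/(n+1)$ from the Riemannian prototype --- so guessing the prefactor by analogy and then ``checking that it kills the extra term'' cannot succeed without first establishing the divisibility machinery.
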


The second result states that the product structure properties hold at the level of the nilpotent approximations.

\begin{Theorem}
\label{th:nilpotent_equiv_main}
Let $M$ be a smooth manifold, $D$ be a distribution on $M$, and $g_1, g_2$ be two sub-Riemannian metrics on $(M, D)$.
If $g_1, g_2$ are projectively equivalent and non conformal to  each other, then for $q$ in an open and dense subset of $M$, the nilpotent approximation $\hat{D}$ of $D$ at $q$ admits a product structure\footnote{The notion of ``Product structure'' has a specific meaning here, see Definition~\ref{def:product}}, and the nilpotent approximations $\hat{g}_1, \hat{g}_2$ of the metrics form a Levi-Civita pair with constant coefficients.
\end{Theorem}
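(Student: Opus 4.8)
The plan is to reduce the statement to a pointwise, purely algebraic assertion about the nilpotent approximation and then invoke the Riemannian (or previously established sub-Riemannian) classification in the graded setting. First I would recall that projective equivalence of $g_1,g_2$ on $(M,D)$ is an open condition in the following sense: the set of points $q$ at which the pair $(g_1,g_2)$ is "regular'' — meaning the distribution $D$, the relevant brackets, and the endomorphism $A\in\mathrm{End}(D)$ comparing $g_1$ to $g_2$ all have locally constant rank/type — is open and dense in $M$. Restricting attention to such a $q$, I would pass to privileged coordinates at $q$ and consider the one-parameter dilations $\delta_s$ adapted to the flag of $D$. The key observation is that projective equivalence is invariant under the dilation-and-limit procedure defining the nilpotent approximation: the normal extremal flows of $g_1$ and $g_2$, being determined by the sub-Riemannian Hamiltonians $H_1,H_2$ on $T^*M$, converge (after the appropriate rescaling in the fibers) to the normal extremal flows of $\hat g_1,\hat g_2$ on $T^*\widehat M$ with respect to $\hat D$; and since having the same geodesics up to reparameterization is preserved in this limit, $\hat g_1$ and $\hat g_2$ are again projectively equivalent, now on the nilpotent (graded nilpotent Lie group) model.

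Second, I would analyze non-conformality. If $g_1,g_2$ are not conformal, then the endomorphism $A$ is not a scalar multiple of the identity on a dense open set; at such points the eigenvalues of $A$ are non-constant functions, or $A$ has at least two distinct eigenvalues. In the limit, $\hat g_2 = \hat A \,\hat g_1$ with $\hat A$ a constant-coefficient endomorphism that is still not a multiple of the identity (this is where I would need the regularity of $q$: the non-conformality must survive passage to the nilpotent approximation, which requires choosing $q$ outside the closed nowhere-dense set where the limiting $\hat A$ degenerates to a scalar). Then, on the graded nilpotent model, I would apply Theorem 1.6: since $\hat g_1,\hat g_2$ are projectively equivalent and not conformal, the flow of normal extremals of $\hat g_1$ admits a nontrivial quadratic integral. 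The existence of such an integral on the nilpotent model, together with the fact that projectively equivalent non-proportional metrics on a homogeneous space carry the Benenti/Levi-Civita structure, forces $\hat D$ to split as a product $\hat D = \hat D' \oplus \hat D''$ respected by the grading, and $\hat g_1,\hat g_2$ to be of Levi-Civita type (formula \eqref{met1}) with the coefficients necessarily constant because everything is left-invariant on the nilpotent group. This is essentially the graded analogue of the classical Levi-Civita theorem, and for the nilpotent model the "twisting functions'' in \eqref{met1} are functions on a homogeneous space that are simultaneously eigenfunctions forced to be constant.

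The main obstacle I anticipate is the commutation of two limits: the nilpotent approximation is a singular limit in the base variables, while the quadratic integral and the endomorphism $A$ live partly in the fiber variables of $T^*M$, and one must check that the rescaled integral does not vanish identically in the limit and that the limiting object is genuinely nontrivial on $T^*\widehat M$. Concretely, the delicate point is to show that the non-conformality of $(g_1,g_2)$ at $q$ — which a priori could be a lower-order phenomenon invisible to the leading-order nilpotent model — persists for $q$ in a dense open set; this is handled by discarding the closed set where $\hat A$ becomes scalar, and by an argument that if $\hat A$ were scalar at every point of an open set then $g_2$ would be conformal to $g_1$ there, contradicting the hypothesis. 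A secondary technical point is establishing the graded Levi-Civita classification itself on the nilpotent model with the required "constant coefficients'' conclusion, which should follow by running the Levi-Civita/Eisenhart argument equivariantly under the left action of the nilpotent group together with the grading automorphisms.
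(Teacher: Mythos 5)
Your proposal hinges on two steps that are each, in this setting, essentially as hard as the theorem itself, and neither is actually carried out.

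First, the claim that projective equivalence passes to the nilpotent limit (``having the same geodesics up to reparameterization is preserved in this limit'') is asserted but not proved, and it is not a soft statement. Knowing that individual normal extremals of $g_1$ and of $g_2$ converge under the dilations to normal extremals of $\hat g_1$ and $\hat g_2$ does not by itself show that the two limiting \emph{families} of geodesics coincide as unparameterized curves: the reparameterizations relating a $g_1$-geodesic to the corresponding $g_2$-geodesic can degenerate in the blow-up, and one must also control which covectors survive the rescaling. The paper avoids this limit entirely. It works instead with the fundamental algebraic system $A\widetilde\Phi=b$ of Proposition~\ref{Aprop}: Lemmas~\ref{le:degA} and~\ref{le:degb} identify the entries $\hat a^s_{j,k}$, $\hat b^s_j$ of the system for the nilpotent approximation as the top weighted-degree parts of $a^s_{j,k}(q_0)$, $b^s_j(q_0)$, and Lemma~\ref{le:minors} transfers the rank degeneracy of $\begin{pmatrix} A & \aa b\end{pmatrix}$ to $\begin{pmatrix}\hat A & \aa(q_0)\hat b\end{pmatrix}$, producing a solution of $\hat A\hat\Phi=\hat b$ and hence, via Proposition~\ref{prop:inverse}, an \emph{orbital diffeomorphism} between the flows of $\hat g_1$ and $\hat g_2$. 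Note that even the paper does not conclude that $\hat g_1,\hat g_2$ are projectively equivalent --- only orbitally diffeomorphic near ample covectors --- which is why Remark~\ref{re:Carnot_micro} is needed; your argument needs the stronger conclusion and supplies no proof of it.

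Second, and more seriously, you invoke a ``graded Levi-Civita classification'' on the Carnot group, to be obtained ``by running the Levi-Civita/Eisenhart argument equivariantly.'' There is no such classification available: the Levi-Civita and Eisenhart arguments rest on the Levi-Civita connection, geodesic completeness of the exponential map in all directions, and the de~Rham/holonomy decomposition, none of which exist for a proper sub-Riemannian metric. Whether Levi-Civita pairs exhaust the projectively equivalent pairs is precisely the paper's stated open question. The statement you need on the nilpotent model is exactly Theorem~\ref{th:Carnot.prod}, whose proof is the technical core here: it uses the first divisibility condition (Proposition~\ref{prop:divty} and Corollary~\ref{coro:from.div2}) to get $[V^1_\ell,V^1_{\ell'}]=0$ for distinct eigenspaces of the transition operator, and then the injectivity of the fundamental algebraic system (Proposition~\ref{th:nonzer.min}) to force $\Phi_k=\alpha_\ell^2 u_k/\aa$ on each factor and hence the directness of the sum $\mathfrak g=\mathfrak g_1\oplus\cdots\oplus\mathfrak g_N$. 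The existence of one quadratic integral (your appeal to the integral theorem) is far from sufficient to force a product structure. As written, the proposal assumes the two key lemmas rather than proving them.
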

Finally we prove in Corollary~\ref{le:conf_affine} that any conformally projectively rigid metric is affinely rigid. As a direct consequence of this and of Theorem \ref{th:nilpotent_equiv_main} we obtain the following.

\begin{Coro}
	\label{Cor:anyrank}
	Let $D$ be a bracket generating distribution on a connected manifold $M$. If at every point of an open and dense subset of $M$ the nilpotent approximation of $(M,D)$ does not admit a product structure, then any sub-Riemannian metric on $D$ is conformally projectively rigid and affinely rigid.
\end{Coro}

In particular, since no bracket generating rank-$2$ distribution admits a product structure, we have the following consequence of Theorem~\ref{th:nilpotent_equiv_main}.

\begin{Coro}
\label{Cor:rank2}
Any {bracket generating}  sub-Riemannian metric on a rank-2 distribution is conformally projectively rigid and affinely rigid on each connected complement of the underlying manifolds $M$.
\end{Coro}

Theorem \ref{th:integral} allows us to get the following rigidity property of generic sub-Riemannian metrics on a given distribution.

\begin{Theorem}
\label{th:generic_metric}
Let $M$ be a smooth connected manifold and $D$ be a distribution on $M$. A generic sub-Riemannian metric on $(M,D)$ is affinely rigid and conformally projectively rigid.
\end{Theorem}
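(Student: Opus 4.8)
The plan is to deduce the whole statement from Theorem \ref{th:integral} by showing that it suffices to prove the following: \emph{a generic sub-Riemannian metric $g$ on $(M,D)$ admits no nontrivial quadratic integral of its normal extremal flow}, where ``generic'' means belonging to a residual subset for the Whitney $C^\infty$ topology on the (open, convex, hence Baire) space of smooth positive-definite sections of $S^2 D^*$. Indeed, by the contrapositive of Theorem \ref{th:integral} such a $g$ is conformally projectively rigid; it is then also affinely rigid, since if $g_2\neq cg$ were affinely equivalent to $g$ it would in particular be projectively equivalent to $g$, hence conformal to it, $g_2=\alpha^2 g$, and the sub-Riemannian analogue of the classical Eisenhart fact --- that two conformal metrics sharing their geodesics up to affine reparameterization are constantly proportional --- would force $\alpha$ to be constant, a contradiction. (This analogue should be isolated as a lemma; it follows from the structure of Levi--Civita pairs of Section \ref{se:LCpairs}.)

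To analyze quadratic integrals, note that a function on $T^*M$ quadratic on the fibers is $Q_a(q,p)=\tfrac12\, a^{ij}(q)p_ip_j$ for a section $a$ of $S^2TM$, and the sub-Riemannian Hamiltonian is $H_g=Q_{h_g}$, where $h_g\in\Gamma(S^2TM)$ is the cometric (of rank $m=\operatorname{rank}D$, with image $D$). Collecting the coefficient of each cubic monomial in $p$, the condition $\{H_g,Q_a\}=0$ becomes an overdetermined first-order linear system $L_g a=0$ on $a$, with coefficients rational in the $1$-jet of $g$, of which $h_g$ is always a solution. The technical heart of the argument is the claim that, thanks to the bracket-generating condition, this system is \emph{of finite type}: after a number $\ell=\ell(n)$ of prolongations its symbol vanishes identically. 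Granting this, on each connected component of $M$ a solution $a$ is determined by its $\ell$-jet at any chosen point $q_0$, and the admissible values of $j^\ell_{q_0}a$ form a linear subspace $W(q_0,g)\subseteq J^\ell_{q_0}(S^2TM)$ that depends algebraically on the $(\ell+1)$-jet of $g$ at $q_0$ and always contains the line $\mathbb{R}\cdot j^\ell_{q_0}h_g$. In particular $\dim$ of the space of quadratic integrals is at most $\dim W(q_0,g)$, and $g$ admits a nontrivial one only if $\dim W(q_0,g)\ge 2$.

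Now fix one point $q_0$ in each of the (at most countably many) connected components of $M$. The jet evaluation $g\mapsto j^{\ell+1}_{q_0}g$ is a continuous open surjection onto the finite-dimensional jet space, and the bad locus $\Sigma_{q_0}=\{\,\sigma:\dim W(q_0,\sigma)\ge 2\,\}$ is a real-algebraic subset, cut out by the vanishing of suitable minors. It is a \emph{proper} subset: some germ of metric on $(M,D)$ at $q_0$ has only the trivial quadratic integral --- this can be arranged by a small perturbation of the nilpotent approximation at $q_0$, whose quadratic integrals span a finite-dimensional space that can be destroyed using the finite-type bound, or, in the Riemannian case $D=TM$, by the classical genericity of projective rigidity. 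Hence $\Sigma_{q_0}$ is closed and nowhere dense, so its preimage is a closed nowhere-dense set of metrics. Every metric admitting a nontrivial quadratic integral lies in the union of these preimages over the countably many components, a meagre set; its complement is residual, and by the first paragraph every metric in it is affinely rigid and conformally projectively rigid. (This is consistent with, and much weaker in that range than, the unconditional rigidity of Corollary \ref{Cor:rank2}.)

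The main obstacle is the finite-type claim for the degenerate sub-Riemannian Killing-type system $L_g a=0$: unlike in the Riemannian case, its principal symbol $a\mapsto\operatorname{Sym}\!\big(h_g^{il}\xi_l\, a^{jk}\big)$ is degenerate along the annihilator of $D$, and one must feed in iterated Lie brackets of vector fields in $D$ to show that the prolonged system closes up, producing a universal bound $\ell(n)$. A secondary point is verifying properness of $\Sigma_{q_0}$ for \emph{every} admissible pair $(m,n)$, i.e.\ exhibiting, near a generic point and for the given germ of $D$, a metric whose only quadratic integrals are the multiples of $H_g$; the nilpotent-approximation perturbation sketched above should do this, but it requires the finite-type bound to be already in hand, so the two difficulties are intertwined.
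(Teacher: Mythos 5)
Your first reduction is exactly the paper's: Theorem \ref{th:integral} reduces everything to showing that a generic metric admits no nontrivial quadratic integral, and the passage from conformal projective rigidity to affine rigidity is the Corollary following Proposition \ref{le:constant_alphai} (though the paper derives the constancy of $\alpha$ from the first divisibility condition and Lemma \ref{le:PQ}, not from the structure of Levi--Civita pairs as you suggest). The genuine gap is in the step you yourself flag as the ``main obstacle'': the claim that the sub-Riemannian Killing-type system $L_g a=0$ is of finite type with a universal bound $\ell(n)$ on the number of prolongations. This is not a technicality to be granted --- the principal symbol is degenerate along the annihilator of $D$, derivatives of $a$ enter only along $D$, and nothing in your sketch shows the prolonged system closes up. Without it you have neither the finite-dimensionality of the solution space nor the algebraicity of the bad locus $\Sigma_{q_0}$ in a finite jet space, so the Baire-category argument never gets off the ground. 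Your secondary difficulty (properness of $\Sigma_{q_0}$, i.e.\ exhibiting for every admissible germ of $D$ at least one metric with only the trivial quadratic integral) is likewise unresolved, and as you note the proposed fix via the nilpotent approximation presupposes the finite-type bound, so the argument is circular at that point. Nilpotent approximations are Carnot groups with large symmetry and typically carry extra quadratic integrals, so this is not a removable hypothesis.

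The paper's proof (Proposition \ref{le:quadratic} and Appendix \ref{se:proof_quadratic}) avoids finite type entirely by adapting the perturbation argument of Kruglikov and Matveev: one chooses $k$ sets of $N=n(n+1)/2$ points joined by ample geodesics with $2$-decisive initial covectors, and shows (Lemma \ref{le:subm}) that localized perturbations of the metric along an ample geodesic generate, via the map $\widetilde{g}\mapsto e^{\vec{h}_{\widetilde g}}(\lambda_0)$, a full neighbourhood of the endpoint covector --- a submersion property proved from the ampleness of the Jacobi curve, i.e.\ $\dim J^{(k_0)}_{\lambda}=2n$. The constraints that a quadratic integral would impose on these finitely many covectors are then made incompatible by a small perturbation, which yields an open-dense (hence residual) condition without ever bounding the dimension of the space of integrals. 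If you want to salvage your route you would need to actually prove the finite-type statement for the degenerate symbol, which is a substantial independent result; otherwise the ampleness-based perturbation argument is the way the statement is established here.
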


Theorem \ref{th:nilpotent_equiv_main} allows us to get the following rigidity results for all sub-Riemannian metric of a generic distributions.

\begin{Theorem}
\label{th:generic_distrib}
Let $m$ and $n$ be two integers such that $2\leq m <n$, and assume $(m,n) \neq (4,6)$ and $m \neq n-1$ if $n$ is even.
Then, given an $n$-dimensional connected manifold $M$ and  a generic rank-$m$ distribution $D$ on $M$,
any sub-Riemannian metric on $(M,D)$ is affinely rigid and conformally projectively rigid.
\end{Theorem}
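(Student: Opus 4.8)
The plan is to deduce Theorem~\ref{th:generic_distrib} from Theorem~\ref{th:nilpotent_equiv_main}. By the latter, if some sub-Riemannian metric on $(M,D)$ admits a projectively equivalent metric that is not conformal to it, then the set of points $q$ where $\hat D_q$ admits a product structure is open and dense; and since affine equivalence implies projective equivalence while a conformal non-proportional pair is never affinely equivalent, the same conclusion holds if some metric fails to be affinely rigid. Hence it suffices to prove that, for $(m,n)$ satisfying the hypotheses and $D$ generic, this product locus has empty interior. I would invoke the standard description of the small flag of a generic distribution: for $D$ in a residual set there is an open dense $M^{\mathrm{reg}}\subseteq M$ with nowhere-dense complement on which $D$ has the maximal growth vector, with $n_2=\min(n,m+\binom m2)$, and on which — by jet-transversality — the step-$2$ symbol $\Lambda^2 D_q\to T_qM/D_q$ is a generic surjection $\Lambda^2\R^m\to\R^{n-m}$ whenever the growth there is $2$-step. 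The product locus is then contained in the union of $M\setminus M^{\mathrm{reg}}$ and the set of points of $M^{\mathrm{reg}}$ where the symbol is a decomposable $2$-step symbol; the first set is nowhere dense by genericity of $D$, so everything reduces to the algebraic claim that, under the hypotheses, the maximal-growth graded nilpotent Lie algebra $\mathfrak g$ — with generic bracket when $2$-step — admits no product structure.

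I would prove this claim by cases. If $n\ge m+\binom m2$ then $\dim\mathfrak g_{-2}=\binom m2$, whereas any decomposition $\mathfrak g=\mathfrak a\oplus\mathfrak b$ with $k_1=\dim\mathfrak a_{-1}\ge1$, $k_2=\dim\mathfrak b_{-1}\ge1$ would give $\dim\mathfrak g_{-2}=\dim\mathfrak a_{-2}+\dim\mathfrak b_{-2}\le\binom{k_1}2+\binom{k_2}2<\binom m2$ (using that $\mathfrak a,\mathfrak b$ are generated in degree $-1$), a contradiction; this settles all $n\ge m+\binom m2$. Otherwise $m<n<m+\binom m2$, $\mathfrak g$ is $2$-step and determined by a generic surjection $\beta\colon\Lambda^2\R^m\to\R^q$ with $q=n-m$, equivalently by the generic $q$-dimensional space of $2$-forms $\Pi=\{\xi\circ\beta:\xi\in(\R^q)^*\}$ on $\R^m$. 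A product structure yields a splitting $\R^m=V_1\oplus V_2$ into nonzero subspaces, $k_i=\dim V_i$, with every $\omega\in\Pi$ satisfying $\omega(V_1,V_2)=0$ — i.e.\ $\Pi\subseteq\Lambda^2 V_1^*\oplus\Lambda^2 V_2^*$ — and with $\R^q=\beta(\Lambda^2 V_1)\oplus\beta(\Lambda^2 V_2)$.

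For $q\ge3$ this is excluded by a dimension count: the subvarieties $\{\Lambda^2 V_1^*\oplus\Lambda^2 V_2^*:\ \R^m=V_1\oplus V_2,\ \dim V_i=k_i\}$ have dimension $2k_1k_2$, so the $\Pi$'s contained in one of them form a subset of the Grassmannian of $q$-planes of codimension at least $k_1k_2(q-2)$, resp.\ at least $(m-1)(q-1)$ when $k_1=1$; both are positive, so the generic $\Pi$ is excluded for every choice of ranks. For $q=2$ one uses the coranks of the members of $\Pi$: from $\omega(V_1,V_2)=0$ and $\beta(\Lambda^2 V_i)\subsetneq\R^2$ one obtains a nonzero $\omega_i\in\Pi$ with $V_i\subseteq\ker\omega_i$, hence $\operatorname{corank}\omega_i\ge k_i$; but a nonzero member of a generic pencil ($\dim\Pi=2$) has corank at most $2$, since skew $2$-forms of corank $\ge3$ form a subvariety of codimension $\ge3$, not met by a generic projective line. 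Thus $k_1=k_2=2$, so $m=4$ and $q=2$, i.e.\ $(m,n)=(4,6)$; for that pair the generic pencil is a line meeting the Klein quadric of decomposable $2$-forms on $\R^4$ in two points whose kernels split $\R^4$, giving an actual product structure (a product of two $3$-dimensional Heisenberg algebras), which is why $(4,6)$ is a genuine exception. For $q=1$ one cannot split $\R$ into two nonzero summands, so $\beta(\Lambda^2 V_i)=0$ for one $i$, forcing $V_i\subseteq\ker\omega$ for the single bracket form $\omega$; a generic $2$-form on $\R^m$ is nondegenerate when $m$ is even (impossible) and has a $1$-dimensional kernel when $m$ is odd, which does give a product structure — and $m=n-1$ is odd precisely when $n$ is even. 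Hence $\mathfrak g$ is indecomposable unless $(m,n)=(4,6)$ or $m=n-1$ with $n$ even.

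Putting this together with the reduction of the first paragraph proves the theorem. The main obstacle is the $2$-step analysis for $q\le2$: plain dimension counts are inconclusive there, so one must exploit the finer structure — the coranks of the members of the pencil of $2$-forms, the codimensions of the skew-rank loci, and the Klein quadric — to pin down that $(4,6)$ and the corank-one pairs $(n-1,n)$ with $n$ even are the only exceptions. A secondary technical point is to make the genericity statements precise, namely that for a generic $D$ the regular stratum $M^{\mathrm{reg}}$ is open dense with nowhere-dense complement and the step-$2$ symbol is generic there, which requires a jet-transversality argument together with the theory of the small flag of a generic distribution.
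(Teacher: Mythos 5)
Your overall strategy is the same as the paper's: reduce, via Theorem~\ref{th:nilpotent_equiv_main} (together with Proposition~\ref{le:constant_alphai} for the affine case), to showing that the Tanaka symbol of a generic distribution at generic points is an indecomposable fundamental graded Lie algebra, dispose of the free case $n\ge m+\binom m2$ by comparing $\dim V^2$ with $\binom{k_1}{2}+\binom{k_2}{2}$, and then analyse generic $(n-m)$-dimensional subspaces $\Omega\subset\wedge^2V_1^*$ modulo $GL(V_1)$. Where you genuinely diverge is in the core algebraic step (the paper's Proposition~\ref{allgenTan}). The paper reduces every corank $n-m\ge 3$ to the pencil case $n-m=2$ by looking at planes inside $\Omega$, and then settles the pencil case with the Kronecker--Gauger normal form, using minimal indices for odd $m$ and elementary divisors plus the Pfaffian for even $m$. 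You instead kill all coranks $q\ge 3$ by a direct dimension count in $\mathrm{Gr}(q,\wedge^2V_1^*)$ (codimension $\ge k_1k_2(q-2)$ of the decomposable locus), and for $q=2$ you replace the normal-form machinery by the codimensions of the rank-degeneracy loci of skew forms, which bound the corank of members of a generic pencil and hence the sizes $k_i$ of the factors. Your route is more elementary and self-contained (no classification of pencils needed); the paper's buys the finer structural information encoded in the minimal indices and a uniform reduction of all coranks to one case. Both identify the same exceptional pairs and the same mechanisms ($\mathfrak h_3\oplus\mathfrak h_3$ for $(4,6)$, the kernel line of a generic $2$-form for $m=n-1$ odd).

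Two spots need a little more care than you give them. First, in the $q=2$ case your jump to ``thus $k_1=k_2=2$'' from ``$\operatorname{corank}\omega_i\ge k_i$ and $\le 2$'' silently discards the possibilities $k_i=1$ and, more importantly, the sub-case where one of the two summands of $\Omega$ is zero, i.e.\ $\Lambda^2V_1=0$ and the \emph{entire} pencil has the line $V_1$ in its kernel; for $m$ odd the corank bound alone does not exclude this (every member of every pencil already has corank $\ge 1$ by parity), and you need the separate elementary fact that a generic pencil has no common kernel vector (the locus of pencils contained in some $A_v$ has codimension $m-1\ge 2$). This is exactly the point the paper handles with the first minimal index. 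Second, the reduction from ``generic distribution'' to ``generic point of the Grassmannian of symbols'' does require the jet-transversality argument you flag; the paper is equally terse there, so this is not a defect relative to it, but it should be written out.
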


Few words now about the main ideas behind the proofs with references to the corresponding sections of the paper. The problem of the projective equivalence of sub-Riemannian metrics can be reduced to the problem of existence of a fiber preserving orbital diffeomorphism between the flows of normal extremals  in the cotangent bundle (orbital diffeomorphism means that it sends normal extremals of one metric to the normal extremals of another one considered as unparameterized curves). In the Riemannian case, if such diffeomorphism exists then it can be easily expressed in terms of the metrics.
It is not the case anymore in the proper sub-Riemannian case, which is the main difficulty here.
The reason is that, in contrast to the Riemannian case, a sub-Riemannian geodesic is not uniquely determined by its initial point and the velocity at this point (i.e.\ by its first jet at one point).
The order of jet which is needed to determine a geodesic uniquely is controlled by the flag of the Jacobi curves along the corresponding extremal, which were introduced in \cite{li1, li2}. In subsections \ref{jacsec} and \ref{se:ample}  we collect all necessary information about Jacobi curves in order to justify the reduction of the equivalence problem to the existence of a fiberwise diffeomorphism in subsection \ref{se:orbital}.

In what follows, for shortness a function which is a polynomial or rational function on each fiber of $T^*M$ will be simply called a polynomial or rational function respectively on $T^*M$.
The equations on a fiber preserving orbital diffeomorphism form a highly overdetermined system of differential equations. In subsection \ref{se:coord_orbdiffeo}  after certain prolongation process, we reduce this system  to a system of infinitely many linear algebraic equations with coefficients which are polynomial functions so that if a solution of this system exists, then it is unique. We refer to this system as the \emph{fundamental algebraic system} for orbital diffeomorphism.  Its solution must be a rational function involving quadratic radicals on $T^*M$.

 The analysis of compatibility conditions for this system leads to a set of algebraic conditions. In particular, we show
 that one specific polynomial function on $T^*M$ is divisible by another specific polynomial function on $T^*M$. This divisibility condition is equivalent to the existence of an integral 
 for the normal extremal flow, which proves Theorem \ref{th:integral}. In Appendix \ref{se:proof_quadratic} we prove that the non-existence of a non-trivial integral for the geodesic flow of a sub-Riemannian metric is a generic property, adapting the proof of the analogous result for the Riemannian case from \cite{Kruglikov-Matveev2015}. This implies Theorem \ref{th:generic_metric}.

The idea of the proof of Theorem \ref{th:nilpotent_equiv_main} comes from the fact that the filtration of the tangent bundle, generated by the iterative brackets of vector fields tangent to the underlying distribution, induces weighted degrees for polynomial function on $T^*M$. If we replace all coefficients of the fundamental algebraic system at a point by the components of the highest weighted degree, we will get exactly the fundamental algebraic system for the orbital diffeomorphism related to the flow of normal extremals of the nilpotent approximation of the first metric, see the proof of Theorem \ref{th:nilpotent_equiv}. This and the analysis of conditions for projective equivalence for left invariant sub-Riemannian metrics given in Theorem \ref{th:Carnot.prod} are the main steps of the proof of Theorem \ref{th:nilpotent_equiv_main}.

Finally, to prove Theorem \ref{th:generic_distrib} we analyse in Section \ref{sec:gen} for which pairs $(m,n)$ the generic $n$-dimensional graded  nilpotent Lie algebras generated by the homogeneous component of weight $1$ can not be represented as a direct sum of  two nonzero graded  nilpotent Lie algebras.

\section{Preliminaries}
\label{se:prelim}
\subsection{Sub-Riemannian manifolds}
Let us recall some standard notions from sub-Riemannian geometry. Let $M$ be a $n$-dimensional smooth manifold and $D$ be a rank-$m$ distribution on $M$, i.e., $D$ is  a subbundle of $TM$ of rank $m$. We define by induction a sequence of modules of vector fields by setting $D^1 = \{ X : X \text{ is a section of } D \}$, and, for any integer $k>1$,  $D^k = D^{k-1} + \mathrm{span}\{ [X, Y] : X \text{ is a section of } D, \ Y \text{ belongs to } D^{k-1}\}$, where span is taken over the smooth functions on $M$.
	The \emph{Lie algebra} $\mathrm{Lie}(D)$ generated by the distribution $D$ is defined as  $\mathrm{Lie}(D) = \bigcup_{k \geq 1} D^k$.	

For $q \in M$, we denote by $D^k(q)$ and $\mathrm{Lie}(D)(q)$ respectively  the subspaces $\mathrm{span}\{X(q) : X \in D^k \}$ and $\mathrm{span}\{X(q) : X \in \mathrm{Lie}(D) \}$ of $T_q M$.
\begin{Def}
	The distribution $D$ is said to be \emph{bracket generating} if at any point $q \in M$ we have $\mathrm{Lie}(D)(q) = T_qM$.	
\end{Def}
In the rest of the paper, all distributions are supposed to be bracket generating. If $D$ is bracket generating then for any $q \in M$ there exists an integer $k$ such that $D^k(q) = T_q M$. The smallest integer with this property is called the \emph{nonholonomic order} (or simply \emph{the step}) of $D$ at $q$ and it is denoted by $r = r(q)$.
\begin{Def}
	A point $q_0 \in M$ is called \emph{regular} if, for every integer $k \geq 1$, the dimension $\dim D^k(q)$ is constant in a neighborhood of $q_0$.
\end{Def}

The \emph{weak derived flag of the distribution} $D$ at $q$ is the following filtration of vector spaces
	 \begin{equation}
	 \label{flag}
	D(q) = D^1(q) \subset D^2(q) \subset \dots \subset D^r(q) = T_qM.
	 \end{equation}	
For any positive integer $k$, we set $\dim D^k(q) = m_k(q)$. In particular, $m_1 = m$ and $m_r = n$. We call \emph{weights} at $q$ the  integers $w_1(q), \dots, w_{n}(q)$ defined by $w_i(q) = s, \text{ if } m_{s-1} < i \leq m_s$, where we set $m_0 = 0$.

\begin{Def}
A set of vector fields $\{X_1, \dots, X_n\}$ is called a \emph{frame of $TM$ adapted to $D$} at $q \in M$ if for any integer $i \in \{1,\dots,n\}$, the vector field $X_i$ belongs to $D^{w_i}$, and for any integer $k \in \{1,\dots ,r\}$, the vectors $X_1(q), \dots, X_{m_k}(q)$ form a basis of $D^k(q)$.
The \emph{structure coefficients} of the frame are the real-valued functions $c^k_{ij}$, $i,j,k \in \{1,\dots,n\}$, defined near $q$ by
$$
[X_i,X_j] = \sum_{k=1}^n c^k_{ij} X_k.
$$
\end{Def}
Such a frame can be constructed in the following way.
We start by choosing vector fields $X_1, \dots, X_m \in D^1$ whose values at $q$ form a basis of $D(q)$. Then we choose $m_2 - m$ vector fields $X_{m+1}, \dots, X_{m_2}$ among $\{ [X_i, X_j], \  1 \leq i,j \leq m \}$  whose values at $q$ form a basis of $D^2(q)$. Continuing in this way we get a set of vector fields $X_1, \dots, X_n$  such that $\mathrm{span} \{X_1(q), \dots, X_{m_k}(q)\} = D^k(q)$ for every integer $k\leq r$. In particular, $X_1(q), \dots, X_n(q)$ form a basis of $T_{q} M$.
Note that if $q$ is a regular point, then a frame adapted at $q$ is also adapted at any point near $q$.\medskip

Choosing now a Riemannian metric $g$ on $D$, we obtain a sub-Riemannian manifold $(M,D,g)$.
By abuse of notations we also say that $g$ is a \emph{sub-Riemannian metric} on $(M,D)$.
As mentioned in the introduction, the geodesics of $(M,D,g)$ are the projections on $M$ of the Pontryagin extremals associated with the minimization of energy. There exist two types of geodesics, the normal and abnormal ones. Abnormal geodesics depend only on the distribution $D$, not on $g$, hence they are of no use for the study of equivalence of metrics. Normal geodesics admit the following description.

For $q\in M$, we define a semi-norm on $T^*_q M$ by
$$
\| p \|_q = \max \left\{ \left< p, v\right> :  v \in D(q), \ g(q)(v,v) = 1 \right\}, \qquad p \in T^*_q M.
$$
The \emph{Hamiltonian} of the sub-Riemannian metric $g$ is the function $h: T^*M \rightarrow \R$ defined by
$$
h(q,p) = \frac{1}{2} \|  p \|_q^2, \qquad q\in M, \ p \in T^*_qM.
$$
\begin{Def}
	A \emph{normal extremal} is a trajectory $\lambda(\cdot)$ of the Hamiltonian vector field, i.e.\ $\lambda(t) = e^{t\vec{h}}\lambda_0$ for some $\lambda_0 \in T^*M$. A \emph{normal geodesic} is the projection $\gamma(t) = \pi (\lambda(t))$ of a normal extremal, where $\pi: T^*M\rightarrow M$ is the canonical projection.
\end{Def}

It is useful to give the expression of $\vec{h}$ in local coordinates. Fix a point $q_0 \in M$ and choose a frame $\{ X_1, \dots, X_n \}$ of $TM$ adapted to $D$ at $q_0$ such that $X_1, \dots, X_m$ is a $g$-orthonormal frame of $D$. At any point $q$ in a neighborhood $U$ of $q_0$, the basis $X_1(q), \dots, X_n(q)$ of $T_q M$ induces coordinates $(u_1,\dots,u_n)$  on $T^*_q M$ defined as $u_i(q,p) = \langle p, X_i(q) \rangle$. These coordinates in turn induce a basis $\partial_{u_1}, \dots, \partial_{u_n}$ of $T_{\lambda}(T^*_qM)$ for any $\lambda \in \pi^{-1}(q)$. For $i = 1, \dots, n$, we define the lift $Y_i$ of $X_i$ as the (local) vector field on $T^*M$ such that $\pi_* Y_i = X_i$ and $du_j (Y_i) = 0 \ \ \forall 1 \leq j \leq n$. The family of vector fields $\{ Y_1, \dots, Y_n, \partial_{u_1}, \dots, \partial_{u_n}\}$ obtained in this way is called a \emph{frame of $T(T^*M)$ adapted at} $q_0$. By a standard calculation, we obtain
\begin{equation}
\label{eq:vech}
h = \frac{1}{2}\sum_{i=1}^{m} u_i^2 \qquad \hbox{and} \qquad \vec{h}=\sum_{i=1}^{m} u_i Y_i + \sum_{i=1}^{m} \sum_{j,k=1}^{n} c^{k}_{ij} u_i u_k \partial_{u_j}.
\end{equation}

Note that a normal geodesic $\gamma(t) = \pi (e^{t\vec{h}}\lambda_0)$ satisfies $g(\dot{\gamma}(t),\dot{\gamma}(t)) = 2 h (\lambda(t)) = 2 h(\lambda_0)$, so the geodesic is arclength parameterized if $h(\lambda_0)=1/2$.

\subsection{Jacobi curves}
\label{jacsec}
As mentioned in the introduction the notion of Jacobi curve of a normal extremal is important for the considered equivalence problem.
This notion, introduced in \cite{agrgam1}, comes from the notion of Jacobi fields in Riemannian geometry. A Jacobi field is a vector field along a geodesic which carries information about minimizing properties of the geodesic. The Jacobi curve is a generalization of the space of Jacobi fields which can be defined in sub-Riemannian geometry.

For completeness we introduce Jacobi curves and all necessary related objects here, for more details we refer to \cite{jac1, li2, ref2}. Consider a sub-Riemannian manifold $(M,D,g)$ and a normal geodesic $\gamma(t) \in M, \ t \in [0,T]$. It is the projection on $M$ of an extremal $\lambda(t) = e^{t\vec{h}}\lambda$ for some $\lambda \in T^*M \text{ such that } \pi(\lambda) = \gamma(0)$. The $2n$-dimensional space $T_{\lambda}(T^*M)$ endowed with the natural symplectic form $\sigma_{\lambda}(\cdot,\cdot)$ is a symplectic vector space. A Lagrangian subspace of this symplectic space is a vector space of dimension $n$ which annihilates the symplectic form. We denote by $ \mathcal{V}_{\lambda(t)}$ the vertical subspace $T_{\lambda(t)}(T_{\gamma(t)}^*M)$  of   $T_{\lambda(t)}(T^*M)$, it is vertical in the sense that $\pi_* (\mathcal{V}_{\lambda(t)}) = 0$. Now we can define the Jacobi curve  associated with the normal geodesic $\gamma(t)$.
\begin{Def}
\label{def:jacobi}
	For $\lambda \in T^*M$, we define the \emph{Jacobi curve} $J_{\lambda}(\cdot)$ as the curve of Lagrangian subspaces of $T_{\lambda}(T^*M)$ given by
	$$ J_{\lambda}(t) = e^{-t\vec{h}}_*\mathcal{V}_{\lambda(t)}, \qquad t \in [0, T].$$
\end{Def}
\label{def:extension}
We introduce the extensions of a Jacobi curve as an analogue to the Taylor expansions at different orders of a smooth curve.
\begin{Def}
For an integer $i \geq 0$, the \emph{$i$th extension} of the Jacobi curve $J_{\lambda}(\cdot)$ is defined as
$$
J_{\lambda}^{(i)} =  \mathrm{span} \left\{  \frac{d^j}{dt^j}l(0) \ : \ l(s) \in J_{\lambda}(s)\ \forall s \in [0, T], \ l(\cdot) \text{ smooth }, \ 0\leq j \leq i \right\}.
$$
\end{Def}
In other words, these spaces are spanned by all the directions generated by derivatives at $t=0$ of the standard curves lying in the Jacobi curve. By definition, $J^{(i)}_{\lambda} \subset J^{(i+1)}_{\lambda} \subset T_{\lambda}(T^* M)$, so it is possible to define a flag of these spaces.
\begin{Def}
The \emph{flag of the Jacobi curve} $J_{\lambda}(\cdot)$ is defined as
	 \begin{equation}
	 \label{flag.jc}
	 J_{\lambda} = J_{\lambda}^{(0)} \subset J_{\lambda}^{(1)} \subset \dots \subset T_{\lambda}(T^*M).
	 \end{equation}
\end{Def}

In an adapted frame of $T(T^*M)$, the Jacobi curves can be obtained from iterations of Lie brackets by $\vec{h}$. Let us remind first that, for a positive integer $l$ and a pair of vector fields $X,Y$, the notation $(\mathrm{ad}X)^l Y$ stands for $\underbrace{ [ X, \dots, [ X}_\text{$l$ times} ,Y ] ]$.	

\begin{Lemma}
\label{le:jacobi}
Let $q=\pi(\lambda)$. In an adapted frame $\{ Y_1, \dots, Y_n, \partial_{u_1}, \dots, \partial_{u_n}\}$ of $T(T^*M)$ at $q$, the extensions of the Jacobi curve take the following form:
\begin{equation*}
	 \begin{aligned}
	& J_{\lambda}^{(0)} =  \Big\{ v \in T_{\lambda}(T^* M) \ : \ \pi_* v = 0 \Big\}, \\
	& J_{\lambda}^{(1)} =  \Big\{ v \in T_{\lambda}(T^* M) \ : \ \pi_* v \in D \Big\} = J^{(0)}_{\lambda} + \mathrm{span} \Big\{Y_1(\lambda), Y_2(\lambda), \dots, Y_m(\lambda)\Big\},\\
	& J_{\lambda}^{(2)} =  J^{(1)}_{\lambda} + \mathrm{span}\Big\{[\vec{h}, Y_1](\lambda), \dots, [\vec{h}, Y_m](\lambda)\Big\}, \\
	\vdots & \\
	& J_{\lambda}^{(k)} =  J^{(k-1)}_{\lambda} + \mathrm{span}\Big\{(\mathrm{ad}\vec{h})^{k-1} Y_1(\lambda), \dots, (\mathrm{ad}\vec{h})^{k-1} Y_m(\lambda) \Big\}.
	 \end{aligned}
\end{equation*}
\end{Lemma}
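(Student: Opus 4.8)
The plan is first to rewrite each extension $J_\lambda^{(N)}$ purely in terms of iterated brackets of the vertical vector fields $\partial_{u_i}$ with $\vec h$, and then to unwind those brackets using the explicit expression \eqref{eq:vech} of $\vec h$.

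First I would carry out the reduction to iterated brackets. Since $e^{-s\vec{h}}_*$ is a linear isomorphism $T_{\lambda(s)}(T^*M)\to T_\lambda(T^*M)$, a smooth curve $l(s)\in J_\lambda(s)=e^{-s\vec{h}}_*\mathcal V_{\lambda(s)}$ is precisely one of the form $l(s)=e^{-s\vec{h}}_*w(s)$ with $w(s):=e^{s\vec{h}}_*l(s)\in\mathcal V_{\lambda(s)}$ smooth; in an adapted frame of $T(T^*M)$ one writes $w(s)=\sum_{i=1}^n f_i(s)\,\partial_{u_i}(\lambda(s))$ with smooth functions $f_i$, so that $l(s)=\sum_{i=1}^n f_i(s)\big(e^{-s\vec{h}}_*\partial_{u_i}\big)(\lambda)$. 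I would then use the identity $\frac{d^k}{ds^k}\big|_{s=0}\big(e^{-s\vec{h}}_*W\big)=(\mathrm{ad}\,\vec h)^kW$, valid for any vector field $W$ and obtained by iterating $\frac{d}{ds}e^{-s\vec{h}}_*W=e^{-s\vec{h}}_*[\vec h,W]$, together with the Leibniz rule, to get that every derivative $\frac{d^j}{ds^j}l(0)$ with $0\le j\le N$ lies in $\mathrm{span}\{(\mathrm{ad}\,\vec h)^k\partial_{u_i}(\lambda):0\le k\le N,\ 1\le i\le n\}$; conversely, taking $f_i\equiv\delta_{ia}$ constant yields $\frac{d^N}{ds^N}l(0)=(\mathrm{ad}\,\vec h)^N\partial_{u_a}(\lambda)$, so all these vectors occur. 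Hence
\[
J_\lambda^{(N)}=\mathrm{span}\big\{(\mathrm{ad}\,\vec h)^k\partial_{u_i}(\lambda):0\le k\le N,\ 1\le i\le n\big\}.
\]

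Next I would unwind the brackets. For $N=0$ this reads $J_\lambda^{(0)}=\mathcal V_\lambda$, which equals $\{v:\pi_*v=0\}$ since $\ker(\pi_*|_{T_\lambda(T^*M)})$ has dimension $n=\dim\mathcal V_\lambda$. For the inductive step I would differentiate \eqref{eq:vech}: because the $\pi$-horizontal part of each lift $Y_j$ equals that of $X_j$ and does not depend on the fiber coordinates, the bracket $[\partial_{u_i},Y_j]$ is vertical, and $\partial_{u_i}$ bracketed with the vertical part of $\vec h$ is again vertical; therefore $[\vec h,\partial_{u_i}]=-Y_i+Z_i$ for $i\le m$ and $[\vec h,\partial_{u_i}]=Z_i$ for $i>m$, with $Z_i$ a vertical vector field. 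Applying $(\mathrm{ad}\,\vec h)^{k-1}$ and using that $(\mathrm{ad}\,\vec h)^{k-1}$ of a vertical field $\sum_j a_j\partial_{u_j}$ is, by the Leibniz rule, a combination of $\partial_{u_j},[\vec h,\partial_{u_j}],\dots,(\mathrm{ad}\,\vec h)^{k-1}\partial_{u_j}$, I obtain $(\mathrm{ad}\,\vec h)^k\partial_{u_i}(\lambda)\equiv-(\mathrm{ad}\,\vec h)^{k-1}Y_i(\lambda)\pmod{J_\lambda^{(k-1)}}$ for $i\le m$ and $\equiv 0$ for $i>m$. This is precisely the recursion $J_\lambda^{(k)}=J_\lambda^{(k-1)}+\mathrm{span}\{(\mathrm{ad}\,\vec h)^{k-1}Y_1(\lambda),\dots,(\mathrm{ad}\,\vec h)^{k-1}Y_m(\lambda)\}$ for $k\ge 1$, whose first instance is the stated formula for $J_\lambda^{(1)}$. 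Finally, to check $J_\lambda^{(1)}=\{v:\pi_*v\in D\}$ I would compare dimensions: both spaces have dimension $n+m$ (the right-hand one because $\pi_*Y_i=X_i$ and $X_1(q),\dots,X_m(q)$ span $D(q)$), and the inclusion $\subseteq$ is immediate.

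I expect the only delicate point to be the first inclusion in the reduction step: since the extension is built from \emph{all} smooth curves lying in the Jacobi curve, one must verify carefully, via the Leibniz rule, that differentiating the possibly non-constant coefficients $f_i(s)$ never produces a bracket of order exceeding the differentiation order. The rest is bookkeeping with \eqref{eq:vech} and the automatic verticality of $[\partial_{u_i},Y_j]$ and of $\partial_{u_i}$ bracketed with the vertical part of $\vec h$.
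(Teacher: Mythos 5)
Your proof is correct and follows essentially the same route as the paper: both reduce the elements of $J^{(k)}_{\lambda}$ to iterated brackets $(\mathrm{ad}\,\vec{h})^{s}$ of vertical fields and then unwind these using the expression \eqref{eq:vech} of $\vec{h}$ together with the verticality of $[\partial_{u_i},Y_j]$. Your intermediate identity $J_{\lambda}^{(N)}=\mathrm{span}\{(\mathrm{ad}\,\vec{h})^{k}\partial_{u_i}(\lambda):0\leq k\leq N,\ 1\leq i\leq n\}$ simply makes explicit the Leibniz bookkeeping that the paper carries out with a single vertical field $Y=\sum_i a_i\partial_{u_i}$ with non-constant coefficients.
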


\begin{proof}
Let $v \in J_{\lambda}^{(k)}$, for some integer $k \geq 0$. By definition,  $v = \frac{d^s}{dt^s}l(0)$ where $l(\cdot)$ is a curve with $l(t) \in J_{\lambda}(t)$ for any $t\in [0, T]$, and $s \leq k$ is an integer. Then there exists a vertical vector field $Y$ on $T^*M$ (i.e.\ $\pi_*Y=0$) such that, for any $t \in [0,T]$,
$$
l(t) =  e^{-t\vec{h}}_* Y(\lambda(t)),
$$
and $v$ writes as
\begin{equation}
\left. v = \dfrac{d^s}{d t^s}e^{-t\vec{h}}_* Y(e^{t\vec{h}}\lambda) \right|_{t=0} =  (\mathrm{ad}\vec{h})^{s} Y(\lambda).
\end{equation}

As $Y$ is a vertical vector field, in the adapted frame $\{ Y_1, \dots, Y_n, \partial_{u_1}, \dots, \partial_{u_n}\}$ it can be written as $Y = \sum_{i=1}^{n} a_i \partial_{u_i}$. Using the expression \eqref{eq:vech} of $\vec{h}$ in this frame, we obtain
\begin{align*}
[\vec{h},Y]  & = \left[\displaystyle\sum_{i=1}^{m} u_i Y_i + \displaystyle \sum_{i=1}^{m} \sum_{j,k=1}^{n} c^{k}_{ij} u_i u_k \partial_{u_j}, \displaystyle\sum_{i=1}^{n} a_i \partial_{u_i}\right] \\
& = \sum_{i=1}^{m} a_i Y_i  \mod  \mathrm{span} \{ \partial_{u_1}, \dots, \partial_{u_n}\}.
\end{align*}
By iteration, we get
\begin{eqnarray*}
(\mathrm{ad}\vec{h})^{2} Y  &=& \sum_{i=1}^{m} a_i \, [\vec{h},Y_i]  \mod  \mathrm{span} \{ \partial_{u_1} \dots, \partial_{u_n}, Y_1, \dots, Y_m\} \\
& \vdots &  \\
(\mathrm{ad}\vec{h})^{s} Y   &=& \sum_{i=1}^{m} a_i \, (\mathrm{ad}\vec{h})^{s-1}Y_i  \\
& & \ \hfill \mod  \mathrm{span} \{ \partial_{u_1} \dots, \partial_{u_n}, Y_i,  \dots, (\mathrm{ad}\vec{h})^{s-2}Y_i, \ i=1,\dots,m\},
\end{eqnarray*}
which proves the result.	
\end{proof}

\subsection{Ample geodesics}
\label{se:ample}

Note that the dimension of the spaces $J^{(k)}_{\lambda}$ for $|k|>1$ may depend on $\lambda$ in general. Following \cite{ref2}, we distinguish the geodesics corresponding to the extensions of maximal dimension.
\begin{Def}
\label{def:ample}
The normal geodesic $\gamma(t)=\pi(e^{t\vec{h}}\lambda)$ is said to be \emph{ample at $t=0$}  if there exists an integer $k_0$ such that
$$
\dim (J^{(k_0)}_{\lambda}) = 2n.
$$
In that case we say that $\lambda$ \emph{is ample with respect to the metric $g$}.
\end{Def}
Notice that if a geodesic is ample at $t=0$, then it is not abnormal on any small enough interval $[0,\varepsilon]$ (see \cite[Prop.\ 3.6]{ref2}).
It appears that normal geodesic are generically ample in the following sense.
\begin{Theorem}[\cite{ref2}, Proposition 5.23]
\label{th:Agr}
For any $q \in M$, the set of ample covectors $\lambda \in \pi^{-1}(q)$ is an open and dense (and hence non empty) subset of $T^*_qM$.
\end{Theorem}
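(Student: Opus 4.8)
The statement is that for any $q \in M$, the set of ample covectors in $\pi^{-1}(q) = T_q^*M$ is open and dense. Openness is the easy half: the function $\lambda \mapsto \dim J_\lambda^{(k)}$ is lower semicontinuous (the dimension of a span of finitely many smoothly varying vectors can only jump up on a closed set), so the set where some $\dim J_\lambda^{(k_0)} = 2n$ is open. The substance is density, and here the plan is to work along a fixed normal extremal rather than across the fiber: one shows that on any normal extremal $\lambda(\cdot) = e^{t\vec h}\lambda$ the set of times $t$ at which $\lambda(t)$ is \emph{not} ample is a discrete (in particular nowhere dense) subset of the interval of definition, and then uses the flow $e^{t\vec h}$ to propagate density through the fiber. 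Concretely, if $\lambda$ is not ample, I would pick a nearby covector $\lambda'$ in the same fiber whose extremal, run for a short time, passes arbitrarily close to an ample covector; since ampleness is an open condition invariant (as a property of the unparameterized extremal, by the very definition via the flag \eqref{flag.jc}) one transfers ampleness back down.

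The heart of the matter is therefore the claim that non-ample points are isolated along an extremal. For this I would invoke the structure of the flag \eqref{flag.jc} and Lemma \ref{le:jacobi}: the extensions $J_\lambda^{(k)}$ are built by successively adjoining $(\mathrm{ad}\,\vec h)^{k-1}Y_1(\lambda),\dots,(\mathrm{ad}\,\vec h)^{k-1}Y_m(\lambda)$, so ampleness of $\lambda$ at $t=0$ is equivalent to the vectors $\{(\mathrm{ad}\,\vec h)^j Y_i(\lambda)\}$ spanning $T_\lambda(T^*M)$ as $i$ ranges over $1,\dots,m$ and $j$ over nonnegative integers. Along the extremal $\lambda(t)$, after transporting by $e^{-t\vec h}_*$ these become real-analytic-in-$t$ (more precisely, smooth) families of vectors whose span is $J_{\lambda(t)}^{(\cdot)}$; the key observation is that this span fails to be all of $T(T^*M)$ only on a set where a certain determinant (a minor of the matrix formed by these vectors in the adapted frame) vanishes. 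Because the flow $e^{t\vec h}$ is generated by a single vector field and the Jacobi curve construction is compatible with it — $J_{\lambda(s)}(t) = e^{-s\vec h}_* J_\lambda(t+s)$, so the flag at $\lambda(s)$ is the $s$-shift of the flag at $\lambda$ — the dimension function $t \mapsto \dim J_{\lambda(t)}^{(k_0)}$ either is constant equal to its generic value or the bad set is determined by the zero set of a nontrivial function with at most isolated zeros. This is the point where I would simply cite \cite[Prop.\ 5.23]{ref2} and \cite[Prop.\ 3.6]{ref2} for the precise argument, as the excerpt already attributes the theorem to that source; reproducing it would require the machinery of growth vectors of Jacobi curves developed there.

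The main obstacle, then, is not logical subtlety but the need for the apparatus of \cite{ref2}: one must know that \emph{some} covector in each fiber is ample (non-emptiness), which ultimately rests on the bracket-generating hypothesis on $D$ — the iterated brackets $D^k$ eventually fill $T_qM$, and correspondingly the iterated $(\mathrm{ad}\,\vec h)$-brackets of the $Y_i$ eventually fill a complementary $n$ dimensions in $T_\lambda(T^*M)$ for generic $\lambda$, giving $\dim J^{(k_0)}_\lambda = 2n$. Granting non-emptiness, density follows from openness plus the flow-invariance argument above. Since the theorem is quoted verbatim from \cite{ref2}, the honest "proof" in this paper is a reference; the plan above is the shape of the argument one would reconstruct if needed.
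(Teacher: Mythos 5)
The paper offers no proof of this statement: it is quoted from \cite{ref2} and used as a black box, so the only thing to compare your sketch against is the argument that actually works. Your openness claim (lower semicontinuity of $\lambda\mapsto\dim J_\lambda^{(k)}$) is fine, and you correctly locate the role of the bracket-generating hypothesis in non-emptiness. But the density mechanism you propose --- non-ample times are discrete along any normal extremal, then ``propagate through the fiber'' by the flow --- has two genuine defects. First, the discreteness claim is false in general: a normal geodesic that is simultaneously abnormal (such geodesics exist, e.g.\ the singular line of the flat Martinet metric) is ample at no time, since ampleness at $t=0$ excludes abnormality on $[0,\varepsilon]$ (the paper's own remark after Definition~\ref{def:ample}); for such an extremal the non-ample set is the whole interval. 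Second, even where discreteness holds, the flow $e^{t\vec h}$ does not preserve the fiber $\pi^{-1}(q)$, so isolation of bad times along extremals would at best say something about density in $T^*M$, not within $T_q^*M$; and the step ``pick a nearby covector $\lambda'$ in the same fiber whose extremal passes close to an ample covector'' presupposes exactly the density you are trying to establish.

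The argument that works is algebraic on the fiber, and the present paper essentially contains the key tool: by Lemma~\ref{Theorem.rank}, $\dim J_\lambda^{(s+1)} = n+m+\mathrm{rank}\,A_s(u(\lambda))$, where the entries of $A_s$ are polynomials in the fiber coordinates $u$. Hence the non-ample locus in $T_q^*M$ is the common zero set of all $(n-m)\times(n-m)$ minors of all the $A_s$, an algebraic subset of the fiber; its complement is open, and it is dense as soon as it is non-empty, because the complement of the zero set of a single nonzero polynomial is already open and dense. Non-emptiness --- that the maximal rank $n-m$ is attained for some $s$ and some $u$ --- is the only place where bracket generation and the structure of the iterated brackets $(\mathrm{ad}\,\vec h)^{j}Y_i$ genuinely enter, and is the content one really has to import from \cite{ref2}. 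Your instinct to defer to the citation is sound, but the reconstruction you sketch in its place is not the shape of the correct proof.
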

Note that, from the cited result, the set of ample covectors not only is open and dense but it has also full measure in $T^*_qM$ since it contains a Zarisky set.
\begin{rk}
Two proportional covectors $\lambda$ and $c \lambda$, $c>0$, define the same geodesic,  up to time reparameterization, and the corresponding extensions of Jacobi curves $J^{(k)}_{\lambda}$ and $J^{(k)}_{c \lambda}$ have the same dimension. As a consequence, the statement of Theorem~\ref{th:Agr} also holds in $h^{-1}(1/2)$. Namely, the set of ample covectors is open and dense in $\pi^{-1}(q) \cap h^{-1}(1/2)$.
\end{rk}

Ample geodesics play a crucial role in the study of equivalence of metrics because they are the geodesics characterized by their jets. Let us precise this fact.
Fix a nonnegative integer $k$. For a given curve $\gamma: I\rightarrow M$, $I\subset \mathbb R$, denote by
$j_{t_0}^{k}\gamma$ the $k$-jet of $\gamma$ at the point $t_0$. Given $q\in M$, we denote by $J^k_q (g)$ the space of $k$-jets at $t=0$ of the normal geodesics of $g$ issued from $q$ and parameterized by arclength. We set $J^k(g)=\displaystyle{\bigsqcup_{q\in U} J^k_q(g)}$.
	
Define the maps $P^{k}: H\mapsto
	J^k(g)$, where $H=h^{-1}(1/2)$, by
	 \begin{equation*}
	 P^{k}(\lambda)= j_0^k\pi(e^{t\vec {h}}\lambda).
	 \end{equation*}
The properties of the map $P^k$ near a point $\lambda$ can be described in terms of the $k$th extension $J^{(k)}_{\lambda}$ of the Jacobi curve. Let us denote by $\left(J^{(k)}_{\lambda}\right)^{\angle}$ the skew-symmetric complement of $J^{(k)}_{\lambda}$ with respect to the symplectic form $\sigma_\lambda$ on $T_\lambda(T^*M)$, i.e.,
\begin{equation}
\left(J^{(k)}_{\lambda}\right)^{\angle} = \left\{ v \in T_\lambda(T^*M) \ : \ \sigma_\lambda(v,w)=0 \quad \forall w \in J^{(k)}_{\lambda} \right\}.
\end{equation}
	
\begin{Lemma}
\label{lem}
For any integer $k \geq 0$, the kernel of the differential of the map  $P^k$ at a point $\lambda$ satisfies
\begin{equation}
		 \label{kernel}
		\ker d P^k(\lambda) \subset \left(J^{(k)}_{\lambda}\right)^{\angle}.
\end{equation}
\end{Lemma}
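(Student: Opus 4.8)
The plan is to reduce the inclusion to a single scalar identity and then exploit the Leibniz rule. Recall from the proof of Lemma~\ref{le:jacobi} that $J^{(k)}_{\lambda}$ is spanned by the vectors $(\mathrm{ad}\vec{h})^{s}Y(\lambda)$, where $Y$ ranges over the vertical vector fields on $T^{*}M$ (i.e.\ $\pi_{*}Y=0$) and $0\le s\le k$. Hence it suffices to show that every $\xi\in\ker dP^{k}(\lambda)$ satisfies
\[
\sigma_{\lambda}\bigl(\xi,(\mathrm{ad}\vec{h})^{s}Y(\lambda)\bigr)=0 \qquad\text{for every such }Y\text{ and every }0\le s\le k.
\]

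Fix a vertical vector field $Y$ and set $\lambda(t)=e^{t\vec{h}}\lambda$, $\gamma(t)=\pi(\lambda(t))$. I would study the scalar function $f(t)=\sigma_{\lambda(t)}\bigl((e^{t\vec{h}})_{*}\xi,\,Y(\lambda(t))\bigr)$. On one hand, since the Hamiltonian flow is symplectic, $\sigma_{\lambda(t)}\bigl((e^{t\vec{h}})_{*}\xi,(e^{t\vec{h}})_{*}v\bigr)=\sigma_{\lambda}(\xi,v)$ for all $v$; taking $v=(e^{-t\vec{h}})_{*}Y(\lambda(t))$ gives $f(t)=\sigma_{\lambda}\bigl(\xi,(e^{-t\vec{h}})_{*}Y(\lambda(t))\bigr)$, and differentiating $s$ times at $t=0$ together with the standard identity $\frac{d^{s}}{dt^{s}}\big|_{0}(e^{-t\vec{h}})_{*}Y(e^{t\vec{h}}\lambda)=(\mathrm{ad}\vec{h})^{s}Y(\lambda)$ yields $f^{(s)}(0)=\sigma_{\lambda}\bigl(\xi,(\mathrm{ad}\vec{h})^{s}Y(\lambda)\bigr)$. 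So the whole lemma reduces to proving that $f^{(s)}(0)=0$ for $0\le s\le k$.

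On the other hand, $Y(\lambda(t))$ is vertical, so under the canonical identification $\mathcal{V}_{\lambda(t)}\cong T^{*}_{\gamma(t)}M$ the symplectic pairing against it reads $\sigma_{\lambda(t)}(a,Y(\lambda(t)))=-\langle Y(\lambda(t)),\pi_{*}a\rangle$ for every $a$ (in canonical coordinates $(x,p)$, where $\sigma=\sum dp_{i}\wedge dx^{i}$, this is immediate; the sign is irrelevant). Consequently $f(t)=-\langle Y(\lambda(t)),Z(t)\rangle$ with $Z(t):=\pi_{*}(e^{t\vec{h}})_{*}\xi$, a vector field along $\gamma$. Now I bring in the hypothesis. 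Choose a curve $s\mapsto\mu_{s}$ in $H$ with $\mu_{0}=\lambda$, $\dot{\mu}_{0}=\xi$, and set $\gamma_{s}(t)=\pi(e^{t\vec{h}}\mu_{s})$; its variation field $\partial_{s}|_{0}\gamma_{s}(t)$ is exactly $Z(t)$, and $dP^{k}(\lambda)\xi=0$ means that, in a chart on $M$, every component of the $k$-jet $\bigl(x_{s}(0),\partial_{t}x_{s}(0),\dots,\partial_{t}^{k}x_{s}(0)\bigr)$ of $\gamma_{s}$ is stationary at $s=0$, that is $\frac{d^{j}}{dt^{j}}Z^{i}(0)=0$ for all indices $i$ and all $0\le j\le k$. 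Writing $f(t)=-\sum_{i}Y_{i}(\lambda(t))\,Z^{i}(t)$ in that chart (with $Y_{i}$ the $p_{i}$-components of $Y$) and applying the Leibniz rule, each term of $f^{(s)}(0)$ with $s\le k$ carries a factor $\frac{d^{b}}{dt^{b}}Z^{i}(0)$ with $b\le k$ and hence vanishes; so $f^{(s)}(0)=0$ for $0\le s\le k$, which is what remained to be proved. (In particular, the case $j=0$ already gives $\pi_{*}\xi=Z(0)=0$, i.e.\ $\ker dP^{k}(\lambda)\subset\mathcal{V}_{\lambda}=J^{(0)}_{\lambda}$, consistent with the inclusion.)

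The routine ingredients are the symplectic invariance of the Hamiltonian flow, the two interchanges of $\frac{d}{dt}$ with $\mathrm{ad}\vec{h}$ and with $\partial_{s}$, and the final Leibniz count. The one point that requires genuine care — and which I would flag as the main obstacle — is the identification in the last paragraph: one must check carefully that $\ker dP^{k}(\lambda)$, computed inside the ambient jet bundle $J^{k}(\R,M)$, is indeed characterised by the vanishing at $t=0$ of the first $k$ derivatives of the chart components of the variation field $Z(t)=\pi_{*}(e^{t\vec{h}})_{*}\xi$, and that this $Z$ is precisely the variation field of the family $\gamma_{s}=\pi(e^{t\vec{h}}\mu_{s})$. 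Once this is pinned down, everything else is bookkeeping.
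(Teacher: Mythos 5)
Your proof is correct and follows essentially the same route as the paper's: both reduce to pairing a kernel vector against $(\mathrm{ad}\vec{h})^{s}Y(\lambda)$, use the symplectomorphism property of $e^{t\vec h}$ together with fixed Darboux coordinates to apply the Leibniz rule, and conclude from the vanishing of the $k$-jet of $\pi_{*}(e^{t\vec h})_{*}\xi$ at $t=0$. The only (cosmetic) difference is that you phrase the final cancellation via the identity $\sigma(\cdot,Y)=-\langle Y,\pi_{*}\cdot\rangle$ for vertical $Y$, whereas the paper observes that every term $\sigma_{\lambda}(v_{l},w_{l})$ in the Leibniz expansion has both arguments vertical.
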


\begin{proof}
Let $\lambda \in H$ and fix a canonical system of coordinates on $T^*M$ near $\lambda$. In particular, in such coordinates $\pi$ is a linear projection.

Let $v$ be a vector in $\ker d P^k(\lambda)$. Then there exists a curve $s \mapsto \lambda_s$ in $H$ such that $\lambda_0=\lambda$, $\left. \frac{d\lambda_s}{ds}\right|_{s=0}  = v$, and the following equalities holds in the fixed coordinate system:
\begin{equation}
	\label{eq:e*}
	\left. \frac{\partial^{l+1}}{\partial t^l \partial s} \left(\pi \circ e^{t \vec{h}} \lambda_s \right) \right|_{(t,s)=(0,0)} = \left. d \pi\circ \frac{d^l}{d t^l} \left(  e_*^{t\vec{h}} v\right)\right|_{t=0}= 0 \qquad \forall \ 0 \leq l \leq k.
\end{equation}

Consider now $w \in  J^{(k)}_{\lambda}$. Then there exists an integer $j$, $0 \leq j \leq k$, and a vertical vector field $Y$ (i.e., $d\pi \circ Y=0$) on $T^*M$ such that $w$ writes as
\begin{equation}
 w = \dfrac{d^j}{d t^j} \left. \left( e^{-t\vec{h}}_* Y(e^{t\vec{h}}\lambda) \right) \right|_{t=0}.
\end{equation}
We have
\begin{align}\label{startchain}
\sigma_\lambda (v,w) & = \sigma_\lambda \left(v, \dfrac{d^j}{d t^j} \left. \left( e^{-t\vec{h}}_* Y(e^{t\vec{h}}\lambda) \right) \right|_{t=0} \right), \\
& = \dfrac{d^j}{d t^j} \left. \left( \sigma_{\lambda} \left(v, e^{-t\vec{h}}_* Y(e^{t\vec{h}}\lambda) \right) \right) \right|_{t=0}.
\end{align}
The last equality holds, because we work with the fixed bilinear form $\sigma_\lambda$ on the given vector space $T_\lambda T^*M$.

Using now that $e^{t\vec{h}}$ is a symplectomorphism, we obtain
\begin{align}
\sigma_\lambda (v,w)  = \dfrac{d^j}{d t^j} \left. \left( \sigma_{e^{t\vec{h}}\lambda} \left(e^{t\vec{h}}_*v, Y(e^{t\vec{h}}\lambda) \right) \right) \right|_{t=0}
\end{align}
So far, all equalities starting  from \eqref{startchain} were obtained in a coordinate-free manner. Now use again the fixed canonical coordinate system
on $T^*M$ near $\lambda$. In these coordinates, the form $\sigma$ is in the Darboux form. In particular, the coefficients of this form are constants. Therefore,
\begin{align}
~&\sigma_\lambda (v,w) =  \sum_{l=1}^j  \begin{pmatrix}
   j \\  l \\   \end{pmatrix}  \sigma_{\lambda} (v_l,w_l), \\
~& \hbox{where} \quad v_l= \dfrac{d^l}{d t^l} \left. \left( e^{t\vec{h}}_*v \right) \right|_{t=0} \quad  \hbox{and} \quad w_l= \dfrac{d^{j-l}}{d t^{j-l}} \left. \left( Y(e^{t\vec{h}}\lambda) \right) \right|_{t=0} .
\end{align}
By \eqref{eq:e*}, every  vector $v_l$ is vertical. The vectors $w_l$ in the chosen coordinate system are vertical  as well since the vector field $Y$ is vertical. As a consequence, $\sigma_{\lambda} (v_l,w_l)=0$, which implies $\sigma_\lambda (v,w)=0$. This completes the proof.
\end{proof}

\begin{rk}
When the Jacobi curve is equiregular (i.e., the dimensions $\dim J^{(-k)}_{\lambda(t)}$, $k \in \N$,  are constant for $t$ close to $0$), the skew-symmetric complement of the $k$th extension is equal to  the $k$th contractions $J^{(-k)}_{\lambda}$ of the Jacobi curve (see \cite[Lemma 1]{li2}). In that case we can show the equality $\ker d P^k(\lambda) = J^{(-k)}_{\lambda}$.
\end{rk}

Since $\dim \left(J^{(k)}_{\lambda}\right)^{\angle} = 2n - \dim J^{(k)}_{\lambda}$, we get as a corollary of Lemma~\ref{lem}  that ample geodesics are characterized locally by their $k$-jets for $k$ large enough.

\begin{Coro}
\label{le:immersion}
Let $\lambda \in T^*M$ be ample. Then there exists an integer $k_0$ such that the map $P^{k_0}$ is an immersion at $\lambda$.
\end{Coro}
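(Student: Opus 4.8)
The plan is to obtain the statement as an immediate consequence of Lemma~\ref{lem}, the definition of an ample covector, and the nondegeneracy of the canonical symplectic form. Recall that $P^{k_0}$ being an immersion at $\lambda$ means exactly that its differential $dP^{k_0}(\lambda)$ is injective, i.e.\ that $\ker dP^{k_0}(\lambda)=\{0\}$; so it suffices to exhibit one integer $k_0$ for which this kernel is trivial.

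First I would record that the flag \eqref{flag.jc} is nondecreasing, so $k\mapsto \dim J^{(k)}_{\lambda}$ is a nondecreasing function of $k$. Since $\lambda$ is ample, Definition~\ref{def:ample} furnishes an integer $k_0$ with $\dim J^{(k_0)}_{\lambda}=2n=\dim T_{\lambda}(T^*M)$, hence $J^{(k_0)}_{\lambda}=T_{\lambda}(T^*M)$. Therefore its skew-symmetric complement is $\bigl(J^{(k_0)}_{\lambda}\bigr)^{\angle}=\{\,v\in T_{\lambda}(T^*M):\sigma_{\lambda}(v,w)=0\ \forall\,w\in T_{\lambda}(T^*M)\,\}$, and this is $\{0\}$ because $\sigma_{\lambda}$ is nondegenerate. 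Applying Lemma~\ref{lem} with $k=k_0$ then gives $\ker dP^{k_0}(\lambda)\subset \bigl(J^{(k_0)}_{\lambda}\bigr)^{\angle}=\{0\}$, so $dP^{k_0}(\lambda)$ is injective and $P^{k_0}$ is an immersion at $\lambda$.

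I do not expect any genuine obstacle: the substantive content is entirely contained in Lemma~\ref{lem}, and the corollary is a one-line consequence of it combined with the nondegeneracy of $\sigma_{\lambda}$. The only point worth checking is that the notion of immersion is meaningful here, i.e.\ that near $\lambda$ the domain $H=h^{-1}(1/2)$ is a genuine submanifold of $T^*M$ of dimension $2n-1$ on which $P^{k_0}$ is smooth; this holds because, in adapted coordinates, $dh=\sum_{i=1}^m u_i\,du_i$ vanishes only where $u_1=\dots=u_m=0$, i.e.\ where $h=0\neq 1/2$, so $1/2$ is a regular value of $h$ and in particular $dh(\lambda)\neq 0$.
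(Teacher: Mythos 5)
Your proof is correct and is essentially the paper's own argument: the paper likewise notes that $\dim\bigl(J^{(k)}_{\lambda}\bigr)^{\angle}=2n-\dim J^{(k)}_{\lambda}$ (which is just the nondegeneracy of $\sigma_\lambda$) and deduces the corollary immediately from Lemma~\ref{lem} and Definition~\ref{def:ample}. Your extra remark that $H=h^{-1}(1/2)$ is a genuine submanifold is a harmless and correct addition, but not something the paper spells out.
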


\section{Orbital diffeomorphism}
\label{se:orb_diffeo}

Projectively or affinely equivalent metrics have the same geodesics, up to the appropriate reparameterization. But do they have the same (normal) Hamiltonian vector field, up to an appropriate transformation? In particular, is it possible to recover the Hamiltonian vector field of a metric from the knowledge of the geodesics? We will see that both questions have a positive answer near ample geodesics, for which the covector can be obtained from the jets of the geodesics (see Corollary~\ref{le:immersion}).

\subsection{Orbital diffeomorphism on ample geodesics}
\label{se:orbital}

Fix a manifold $M$ and a bracket generating distribution $D$ on $M$, and consider two sub-Riemannian metrics $g_1$ and $g_2$ on $D$. We denote by $h_1$ and $h_2$ the respective sub-Riemannian Hamiltonians of $g_1$ and $g_2$,  and by $H_1=h_1^{-1}(1/2)$ and $H_2=h_2^{-1}(1/2)$ the respective $\frac{1}{2}$-level sets of these Hamiltonians.

\begin{Def}
We say that $\vec h_1$ and  $\vec h_2$ are \emph{orbitally diffeomorphic}
on an open subset $V_1$ of $H_1$ if there exists an open subset $V_2$ of $H_2$ and a diffeomorphism $\Phi: V_1 \rightarrow V_2$ such that $\Phi$ is fiber-preserving, i.e.\  $\pi (\Phi(\lambda)) = \pi (\lambda)$, and $\Phi$ sends the integral curves of $\vec h_1$ to the reparameterized integral curves of $\vec h_2$, i.e., there exists a smooth function $s=s(\lambda,t)$ with $s(\lambda,0)=0$ such that $\Phi\bigl(e^{t\vec h_1}\lambda\bigr) =e^{s\vec h_2} \bigl(\Phi(\lambda)\bigr)$ for all $\lambda \in V_1$ and $t\in \mathbb R$ for which $e^{t\vec h_1}\lambda$ is well defined. Equivalently, there exists a smooth function $\aa(\lambda)$ such that
\begin{equation}
\label{orbeq}
d\Phi \circ \vec{h}_1(\lambda) = \aa(\lambda) \vec{h}_2(\Phi(\lambda)).
\end{equation}
The map $\Phi$ is called an \emph{orbital diffeomorphism} between the extremal flows of $g_1$ and  $g_2$.
\end{Def}

\begin{rk}
\label{rem:H1}
In the definition above, the orbital diffeomorphism $\Phi$ is defined as a mapping from $H_1$ to $H_2$. However it can be easily extended as a mapping $\bar \Phi$ from $T^*M \setminus h_1^{-1}(0)$ to itself by rescaling, i.e.,
$$
\bar \Phi (\lambda) = \sqrt{2 h_1(\lambda)} \Phi \left(\frac{\lambda}{\sqrt{2 h_1(\lambda)}}\right).
$$
This mapping sends the level sets $h_1^{-1}(C^2/2)$ of $h_1$ to the level sets $h_2^{-1}(C^2/2)$ of $h_2$, and the integral curves of $\vec{h}_1$ to the ones of $\vec{h}_2$. In particular \eqref{orbeq} holds with a function $\bar \aa (\lambda) = \aa (\lambda / \sqrt{2 h_1(\lambda)})$. 
\end{rk}

\begin{Prop}
\label{th:proj.to.orb}
If $\vec{h}_1$ and $\vec{h}_2$ are orbitally diffeomorphic on a neighborhood of $H_1 \cap \pi^{-1}(q_0)$, then $g_1, g_2$ are projectively equivalent at $q_0$. If in addition the function $\aa(\lambda)$ in  \eqref{orbeq} satisfies $\vec{h}_1(\aa) = 0$, then $g_1, g_2$ are affinely equivalent.
\end{Prop}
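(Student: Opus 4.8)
The plan is to show directly that the orbital diffeomorphism transports normal geodesics of $g_1$ to normal geodesics of $g_2$, and that the reparameterization it induces is affine under the extra hypothesis. First I would start from a normal geodesic $\gamma_1(t) = \pi(e^{t\vec h_1}\lambda)$ of $g_1$ with $\lambda \in H_1 \cap \pi^{-1}(q_0)$, set $\mu = \Phi(\lambda) \in H_2$, and use \eqref{orbeq} together with the fact that $\Phi$ is fiber-preserving. The relation $\Phi_* \vec h_1(\lambda) = \aa(\lambda)\vec h_2(\Phi(\lambda))$ says that $\Phi$ sends the (unparameterized) integral curve of $\vec h_1$ through $\lambda$ onto the integral curve of $\vec h_2$ through $\mu$. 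Concretely, if one defines the time change $t \mapsto s(t)$ by $s(0)=0$ and $\dot s(t) = \aa\bigl(e^{t\vec h_1}\lambda\bigr)$, then $\Phi\bigl(e^{t\vec h_1}\lambda\bigr) = e^{s(t)\vec h_2}\mu$; this is exactly the integrated form of \eqref{orbeq} recorded in the definition of orbital diffeomorphism. Since $\dot s(0) = \aa(\lambda) > 0$ (it is nonzero because $\Phi$ is a diffeomorphism and $\vec h_1$, $\vec h_2$ are nonvanishing on the level sets, and positive because it maps a smooth orientation-preserving flow line to another), the map $s$ is a local reparameterization near $t=0$.

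Next, projecting with $\pi$ and using the fiber-preserving property $\pi\circ\Phi = \pi$, I get
\begin{equation}
\gamma_1(t) = \pi\bigl(e^{t\vec h_1}\lambda\bigr) = \pi\Bigl(\Phi\bigl(e^{t\vec h_1}\lambda\bigr)\Bigr) = \pi\bigl(e^{s(t)\vec h_2}\mu\bigr) = \gamma_2\bigl(s(t)\bigr),
\end{equation}
where $\gamma_2$ is the normal geodesic of $g_2$ issued from $q_0$ with covector $\mu$. Hence every normal geodesic of $g_1$ through a point in a neighborhood of $q_0$ coincides, up to the reparameterization $s$, with a normal geodesic of $g_2$. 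Running the same argument with $\Phi^{-1}$ in place of $\Phi$ gives the reverse inclusion, so $g_1$ and $g_2$ have the same normal geodesics up to reparameterization near $q_0$. For abnormal geodesics there is nothing to prove: as recalled in the text, abnormal geodesics depend only on $D$, so $g_1$ and $g_2$ share them automatically. Therefore $g_1$ and $g_2$ are projectively equivalent at $q_0$. Here I should also note (or restrict attention to a neighborhood of an ample covector and invoke that ample geodesics are dense, then extend) that the geodesics under consideration are parameterized by arclength because we work on the $\tfrac12$-level sets; but since projective equivalence is about unparameterized geodesics this is harmless, and covectors of arbitrary energy are handled by the rescaling $\bar\Phi$ of Remark \ref{rem:H1}.

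For the affine statement, suppose in addition $\vec h_1(\aa) = 0$, i.e.\ $\aa$ is constant along integral curves of $\vec h_1$. Then $\dot s(t) = \aa\bigl(e^{t\vec h_1}\lambda\bigr) = \aa(\lambda)$ is constant in $t$, so $s(t) = \aa(\lambda)\,t$ is an affine function of $t$. Plugging this into the identity $\gamma_1(t) = \gamma_2(s(t))$ shows that each normal geodesic of $g_1$ is an affine reparameterization of a normal geodesic of $g_2$ and conversely, so $g_1$ and $g_2$ are affinely equivalent at $q_0$. The one point requiring a little care — the main (minor) obstacle — is making rigorous the passage from the infinitesimal relation \eqref{orbeq} to the integrated relation $\Phi(e^{t\vec h_1}\lambda) = e^{s(t)\vec h_2}\mu$ with the explicit time change $\dot s = \aa\circ e^{t\vec h_1}$: this is a standard ODE uniqueness argument (both sides are integral curves of the vector field $\aa(\cdot)\,\vec h_2$ pushed forward appropriately, with the same initial condition $\mu$), but one must check that $\Phi$ being merely a local diffeomorphism on open subsets of the level sets suffices, which it does because everything takes place for $t$ in a small interval where $e^{t\vec h_1}\lambda$ stays in $V_1$.
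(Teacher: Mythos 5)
Your proof is correct and follows essentially the same route as the paper: project the integrated relation $\Phi\bigl(e^{t\vec h_1}\lambda\bigr)=e^{s(t)\vec h_2}\Phi(\lambda)$ through $\pi$ using the fiber-preserving property, note that abnormal geodesics automatically coincide, and observe that $\vec h_1(\aa)=0$ makes $\dot s(t)=\aa\bigl(e^{t\vec h_1}\lambda\bigr)$ constant, hence $s$ affine. The only cosmetic difference is that you re-derive the integrated relation from \eqref{orbeq}, whereas the paper's definition of orbital diffeomorphism already states the two forms as equivalent.
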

\begin{proof}
The first property is obvious. Indeed, if $\vec{h}_1$ and $\vec{h}_2$ are orbitally diffeomorphic, then the relation $\Phi\bigl(e^{t\vec h_1}\lambda\bigr) =e^{s\vec h_2} \bigl(\Phi(\lambda)\bigr)$ implies that any normal geodesics of $g_2$ near $q_0$ satisfies
$$
\pi (e^{s\vec h_2} \lambda)= \pi \circ \Phi \bigl(e^{t\vec h_1}\bigl(\Phi^{-1}(\lambda)\bigr)\bigr)=\pi \circ e^{t\vec h_1}\bigl(\Phi^{-1}(\lambda)\bigr),
$$
and thus coincides with a normal geodesic of $g_1$. Since on the other hand abnormal geodesics always coincide, the metrics $g_1,g_2$ have the same geodesics near $q_0$, and thus are projectively equivalent at $q_0$.

Note that $s=s(\lambda,t)$ is the reparameterization of time and that $\aa(\lambda)= \frac{d s}{dt}(\lambda,0)$. If $\vec{h}_1(\aa) = 0$, then the function $\aa$ is constant along the geodesics and the time-reparameterization is affine, which implies that the metrics are affinely equivalent.
\end{proof}

We have actually a kind of converse statement near ample geodesics.

\begin{Prop}
\label{th:orb.to.proj}
Assume that the sub-Riemannian metrics $g_1$ and $g_2$ are projectively equivalent at $q_0$. Then, for any covector $\lambda_1 \in H_1 \cap \pi^{-1}(q_0)$ ample with respect to $g_1$,  $\vec{h}_1$ and $\vec{h}_2$ are orbitally diffeomorphic on a neighborhood $V_1$ of $\lambda_1$ in $T^*M$.

If moreover $g_1$ and $g_2$ are affinely equivalent at $q_0$, then the function $\aa(\lambda)$ in~\eqref{orbeq} satisfies $\vec{h}_1(\aa)=0$. \end{Prop}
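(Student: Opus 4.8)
The plan is to build the required orbital diffeomorphism by the natural rule ``$\Phi(\lambda)$ is the $g_2$-covector whose normal geodesic is the same unparameterized trajectory as the $g_1$-geodesic issued from $\lambda$'', and to make this rigorous with Corollary~\ref{le:immersion}, which lets one recover an ample covector from finitely many jets of its geodesic. Denote by $P^k_i$ the map $\lambda\mapsto j^k_0\pi(e^{t\vec h_i}\lambda)$ associated with $g_i$. First I would use Corollary~\ref{le:immersion} together with the openness of the immersion condition to fix an integer $k_0$ and a neighborhood of $\lambda_1$ in $H_1$ on which $P^{k_0}_1$ is an immersion, hence a diffeomorphism onto a submanifold of $J^{k_0}(g_1)$; since $\ker dP^{k'}_1(\lambda)\subseteq\ker dP^{k}_1(\lambda)$ for $k'\ge k$ (because $P^k_1$ factors through $P^{k'}_1$ by truncation), $k_0$ may be freely enlarged later. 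Next I would record that reparameterization by $g_2$-arclength is a smooth operation on jets: for a curve $\gamma$ parameterized by $g_1$-arclength the function $\phi$ with $\phi(0)=0$ and $\phi'(s)=g_2\bigl(\dot\gamma(\phi(s)),\dot\gamma(\phi(s))\bigr)^{-1/2}$ solves an ODE with smooth positive right-hand side, so the $k$-jet of $\phi$ at $0$, and hence that of $\tilde\gamma:=\gamma\circ\phi$, depends smoothly on the $k$-jet of $\gamma$; this produces a smooth map $R_{k_0}$ defined near $P^{k_0}_1(\lambda_1)$ in $J^{k_0}$, and $R_{k_0}$ is a local diffeomorphism because reparameterization by $g_1$-arclength is a smooth local inverse.

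Now the construction and the main obstacle. By projective equivalence at $q_0$, for $\lambda$ close to $\lambda_1$ the curve $t\mapsto\pi(e^{t\vec h_1}\lambda)$ is, up to reparameterization, a normal $g_2$-geodesic; parameterizing it by $g_2$-arclength from $\pi(\lambda)$ gives a curve $\tilde\gamma^\lambda=\pi(e^{s\vec h_2}\mu)$ for some $\mu\in H_2\cap\pi^{-1}(\pi(\lambda))$, and $P^{k_0}_2(\mu)=j^{k_0}_0\tilde\gamma^\lambda=R_{k_0}\bigl(P^{k_0}_1(\lambda)\bigr)$. The delicate point, which I expect to be the heart of the proof, is that $\mu$ is \emph{uniquely} determined and depends smoothly on $\lambda$; this is where one needs to know that $\mu$ is ample with respect to $g_2$. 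For that I would argue that ampleness is intrinsic to $D$ and to the unparameterized trajectory, not to the metric: by Lemma~\ref{le:jacobi} one has $\dim J^{(k)}_\lambda=2n$ iff $\pi_*(J^{(k)}_\lambda)=T_{\pi(\lambda)}M$, and the increasing family $\bigl(\pi_*(J^{(k)}_\lambda)\bigr)_k$ of subspaces of $T_{\pi(\lambda)}M$ is the flag of the \emph{curve} $t\mapsto\pi(e^{t\vec h_1}\lambda)$, built from $D$ and the trajectory alone and invariant under reparameterization; hence the $g_1$-ample geodesic through $\lambda_1$, read as a $g_2$-geodesic, is still ample and $\mu$ is $g_2$-ample. (Establishing this identification carefully is the step I expect to require the most work, and it may be worth isolating as a separate lemma.) Granting it, enlarge $k_0$ so that $P^{k_0}_2$ is also an immersion at $\mu_1:=\mu(\lambda_1)$, pick a smooth local left inverse $Q$ of $P^{k_0}_2$, and set $\Phi:=Q\circ R_{k_0}\circ P^{k_0}_1$. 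Then $\Phi$ is well-defined (with $\Phi(\lambda)=\mu(\lambda)$), smooth and fiber-preserving on a neighborhood $V_1$ of $\lambda_1$; from $P^{k_0}_2\circ\Phi=R_{k_0}\circ P^{k_0}_1$, the injectivity of $dP^{k_0}_1(\lambda_1)$, the invertibility of $dR_{k_0}$, and $\dim H_1=\dim H_2=2n-1$, the map $d\Phi(\lambda_1)$ is an isomorphism, so after shrinking, $\Phi\colon V_1\to V_2:=\Phi(V_1)$ is a diffeomorphism.

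It remains to verify that $\Phi$ is orbital and to treat the affine case. Applying the construction to $e^{t_0\vec h_1}\lambda$ (which is again ample, hence in $V_1$ for $t_0$ small) and invoking the uniqueness of the associated $g_2$-covector, one obtains $\Phi(e^{t_0\vec h_1}\lambda)=e^{s(\lambda,t_0)\vec h_2}\Phi(\lambda)$, where $s(\lambda,\cdot)$ is the $g_2$-arclength reparameterization of $\pi(e^{t\vec h_1}\lambda)$; differentiating at $t_0=0$ yields \eqref{orbeq} with $\aa(\lambda)=\tfrac{\partial s}{\partial t}(\lambda,0)=g_2(v_\lambda,v_\lambda)^{1/2}>0$, where $v_\lambda=\tfrac{d}{dt}\big|_{0}\pi(e^{t\vec h_1}\lambda)\in D(\pi(\lambda))$, which is a smooth function of $\lambda$. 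For the affine statement, if $g_1$ and $g_2$ are affinely equivalent at $q_0$ then each of these reparameterizations is affine, so $\pi(e^{s\vec h_2}\Phi(\lambda))=\pi\bigl(e^{(s/\aa(\lambda))\vec h_1}\lambda\bigr)$; writing the same identity for $e^{t_0\vec h_1}\lambda$ and comparing — the two sides are $g_2$-arclength parameterizations of one and the same (locally injective) curve, hence differ by a constant shift of $s$ — forces $\aa(e^{t_0\vec h_1}\lambda)=\aa(\lambda)$ for all small $t_0$, that is $\vec h_1(\aa)=0$.
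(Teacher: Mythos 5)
Your proposal is correct and follows essentially the same route as the paper's proof: construct $\Phi$ as $(P^{k}_2)^{-1}\circ(\text{reparameterization on jets})\circ P^{k}_1$ using Corollary~\ref{le:immersion}, after checking that the $g_1$-ample geodesic remains ample for $g_2$, and then read off $\aa$ and the affine statement from the time reparameterization $s(\lambda,t)$. The only difference is cosmetic: where the paper cites \cite[Prop.~6.15]{ref2} for the parameterization-invariance of ampleness, you sketch it via the projected Jacobi flag, which is a legitimate (if slightly more laborious) way to fill in the same step.
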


\begin{proof}
Assume that $U$ is a neighborhood of $q_0$ such that $g_1$ and $g_2$ have the same geodesics in $U$, up to a reparameterization. Then $g_1$ and $g_2$ have the same ample geodesics in $U$, up to a reparameterization. Indeed, a geodesic $\gamma(t)=\pi(e^{t\vec{h}_1}\lambda)$ of $g_1$ which is ample at $t=0$ is a geodesics of $g_2$ as well by assumption, and moreover a normal one since ample geodesics are not abnormal. The conclusion follows then from the fact that being ample at $t=0$ with respect to $g_1$ is a property of $\gamma(t)=\pi(e^{t\vec{h}_1}\lambda)$ as an admissible curve (see \cite[Proposition 6.15]{ref2}), and does not depend neither on the time parameterization nor on the Hamiltonian vector field.

Fix a nonnegative integer $k$. As in Subsection~\ref{se:ample}, for $q\in U$ and $i=1,2$, we denote by $J^k_q (g_i)$ the space of $k$-jets at $t=0$ of the normal geodesics of $g_i$ issued from $q$ and parameterized by arclength parameter with respect to the sub-Riemannian metric $g_i$. We set $J^k(g_i)=\displaystyle{\bigsqcup_{q\in U} J^k_q(g_i)}$ and we define $P_i^{k}:H_i\mapsto
	J^k(g_i)$ by
	 \begin{equation*}
	 P_i^{k}(\lambda) = j_0^k\pi(e^{t\vec {h}_i}\lambda).
	 \end{equation*}

Let $\lambda_1 \in H_1 \cap \pi^{-1}(q_0)$ be an ample covector with respect to $g_1$. Then by Corollary~\ref{le:immersion} for a large enough integer $k$ there exists a neighborhood $V_1$ of $\lambda_1$ in $H_1$ such that the map $P_1^{k}|_{V_1}$ is a diffeomorphism on its image. Up to reducing $V_1$ we assume that $\pi(V_1) \subset U$ and that every $\lambda \in V_1$ is ample. As a consequence, every geodesic $\pi(e^{t\vec{h}_1}\lambda)$ with $\lambda \in V_1$ is an ample geodesic with respect to $g_2$.

Let $\lambda_2 \in \pi^{-1}(q_0) \cap H_2$ be the covector such that the curves $\pi(e^{t\vec{h}_1}\lambda_1)$ and $\pi(e^{t\vec{h}_2}\lambda_2)$ coincide up to time reparameterization ($\lambda_2$ is unique since an ample geodesic is not abnormal).  Since $\lambda_2$ is ample with respect to $g_2$,  the same argument as above shows that there exists a neighborhood $V_2$ of $\lambda_2$ in $H_2$ such that $P_2^{k}|_{V_2}$ is a diffeomorphism on its image. Up to reducing $V_1$ and $V_2$ if necessary, the reparameterization of the geodesics from the arclength parameter with respect to $g_1$ to the arclength parameter with respect to $g_2$ induces naturally a diffeomorphism $\Psi_k: P_1^{k}(V_1) \subset J^k(g_1) \to P_2^{k}(V_2) \subset J^k(g_2)$. Thus the map $\Phi$ which completes the following
diagram into a commutative one,
	 \begin{equation}
	 \label{commdiag}
	 \begin{diagram}
	V_1 \subset H_1 & \rTo^\Phi & V_2 \subset H_2 \\
	 \dTo^{P_1^k} & \qquad \qquad & \dTo_{P_2^k} \\
	 P_1^{k}(V_1) \subset J^k(g_1)& \rTo^{\Psi_k} & P_2^{k}(V_2) \subset J^k(g_2)
	 \end{diagram}
	 \end{equation}
defines an orbital diffeomorphism between $V_1$ and $V_2$. Due to Remark~\ref{rem:H1} $V_1$ can be extended to a neighborhood of $\lambda_1$ in the whole $T^*M$, which completes the proof of the first part of the proposition.

Assume now that $g_1$ and $g_2$ are affinely equivalent at $q_0$. Then the map $\Phi$ satisfies $\Phi\bigl(e^{t\vec h_1}\lambda\bigr) =e^{s\vec h_2} \bigl(\Phi(\lambda)\bigr)$, where $s=s(\lambda,t)$ is the reparameterization of time and $\frac{d s}{dt}(\lambda,0) =\aa(\lambda)$. Since $g_1$ and $g_2$ are affinely equivalent, $s(\lambda,t)$ must be affine with respect to $t$, which implies that $\aa(e^{t\vec h_1}\lambda)$ is constant, and thus $\vec{h}_1(\aa)=0$.
\end{proof}

\begin{rk}
We have seen in the proof just above that two projectively equivalent metrics have the same set of ample geodesics. In the same way, one can prove that they have the same set of strictly normal geodesics. However we can not affirm that they have the same normal geodesics: a geodesic could be both normal and abnormal for $g_1$ and only abnormal for $g_2$.
\end{rk}

\subsection
{Fundamental algebraic system}
\label{se:coord_orbdiffeo}
Let $M$ be a smooth manifold and $D$ be a bracket generating distribution on $M$. Let us fix two sub-Riemannian metrics $g_1, g_2$ on ($M,D$).

\begin{Def}
The \emph{transition operator} at a point $q \in M$ of the pair of metrics $(g_1, g_2)$ is the linear operator $S_q : D_q \rightarrow D_q$ such that $g_1(q)(S_q v_1, v_2) = g_2(q)(v_1, v_2)$ for any $v_1, v_2 \in D_q$.
\end{Def}
Obviously $S_{q}$ is a positive $g_1$-self-adjoint operator and its eigenvalues $\alpha^2_1(q)$, \dots, $\alpha^2_m(q)$ are positive real numbers (we choose $\alpha_1(q), \dots, \alpha_m(q)$ as positive numbers as well). Denote by $N(q)$ the number of distinct eigenvalues of $S_q$.
\begin{Def}
A point $q \in M$ is said to be \emph{stable with respect to}  $g_1, g_2$ if $q$ is a regular point and $N(\cdot)$ is constant in some neighborhood of $q$.
\end{Def}
The set of regular points and the set of points where $N(q)$ is locally constant are both open and dense in $M$, and so is the set of stable points.

Let us fix a stable point $q_0$. In a neighborhood $U$ of $q_0$ we can choose a $g_1$-orthonormal frame $X_1, \dots, X_m$ of $D$ whose values at any $q \in U$ diagonalizes $S_q$, i.e.\ $X_1(q), \dots, X_m(q)$ are eigenvectors of $S_q$ associated with the eigenvalues  $\alpha^2_1(q), \dots, \alpha^2_m(q)$ respectively. Note that $\frac{1}{\alpha_1}X_1, \dots, \frac{1}{\alpha_m}X_m$ form a $g_2$-orthonormal frame of $D$.  We then complete $X_1, \dots, X_m$ into a frame $\{X_1, \dots, X_n\}$ of $TM$ adapted to $D$ at $q_0$. We call such a set of vector fields $\{X_1, \dots, X_n\}$  a \emph{(local) frame adapted to the (ordered) pair of metrics} $(g_1, g_2)$.

Let $u = (u_1, \dots, u_n)$ be the coordinates on the fibers $T_q^*M$  induced by this frame, i.e.\ $u_i(q,p) = \langle p, X_i(q) \rangle$. The Hamiltonian functions $h_1$ and $h_2$ associated respectively with $g_1$ and $g_2$ write as
$$
h_1 = \frac{1}{2}\sum_{i=1}^{m} u_i^2, \quad h_2 = \frac{1}{2}\sum_{i=1}^{m} \frac{u_i^2}{\alpha_i^2}.
$$
In the corresponding frame $\{ Y_1, \dots, Y_n, \partial_{u_1}, \dots, \partial_{u_n}\}$ of $T(T^*M)$, $\vec{h}_1$ has the form \eqref{eq:vech}, i.e.,
\begin{equation}
\label{eq:h1}
\vec{h}_1=\sum_{i=1}^{m} u_i Y_i + \sum_{i=1}^{m} \sum_{j,k=1}^{n} c^{k}_{ij} u_i u_k \partial_{u_j},
\end{equation}
and a simple computation gives
\begin{equation}
\label{eq:h2}
\vec{h}_2=\sum_{i=1}^{m} \frac{u_i}{\alpha_i^2} Y_i + \sum_{i=1}^{m} \sum_{j,k=1}^{n} \frac{c^{k}_{ij}}{\alpha_i^2} u_i u_k \partial_{u_j} - \sum_{i=1}^{m} \sum_{j=1}^{n} \frac{1}{\alpha_i} X_j(\frac{1}{\alpha_i}) u_i^2 \partial_{u_j}.
\end{equation}

Assume now that $\vec{h}_1$ and $\vec{h}_2$ are orbitally diffeomorphic near $\lambda_0\in H_1\cap \pi^{-1}(q_0)$ and let $\Phi$ be the corresponding orbital diffeomorphism. Following Remark~\ref{rem:H1}, we assume that $\Phi$ is defined on a neighborhood $V$ of $\lambda_0$ in the whole $T^*M$. Let us denote by $\Phi_i, \ i= 1, \dots, n,$ the coordinates $u_i$ of $\Phi$ on the fiber, i.e.\ $u\circ \Phi(\lambda) = (\Phi_1(\lambda), \Phi_2(\lambda), \dots, \Phi_n(\lambda))$.

Using \eqref{eq:h1} and \eqref{eq:h2}, we can write in coordinates the identity \eqref{orbeq}, i.e.\ $d\Phi \circ \vec{h}_1(\lambda)= \aa (\lambda) \vec{h}_2(\Phi(\lambda))$,  and deduce from there some conditions on the coordinates $\Phi_i$. This computation has been made in \cite{Z}, we just give the result here (our equations look a bit different than the ones of \cite{Z} because we use the structure coefficients $c_{ij}^k$ here instead of the $\bar c_{ji}^k$).

\begin{Lemma}[\cite{Z}, Lemmas 1 and 2]
\label{prop}
A smooth fiber-preserving map $\Phi$ from an open subset $V_1$ of $H_1$ to an open subset $V_2$ of $H_2$  satisfies \eqref{orbeq} if and only if the following conditions are satisfied:
\begin{itemize}
\item the function $\aa(\lambda)$ is given by
\begin{equation}
\label{eq:aa}
    \aa = \sqrt{\frac{\alpha_1^2 u_1^2 + \cdots + \alpha_m^2 u_m^2}{ u_1^2 + \cdots + u_m^2}},
\end{equation}
		 \item for $k = 1, \dots, m$,  	
\begin{equation}
\label{eq:first.m.coord}
\Phi_k = \frac{\alpha^2_k u_k}{\aa},
\end{equation}
\item for $j = 1, \dots, m$,
		 \begin{equation}
		 \label{eq:220}
		 \sum_{k= m+1}^{n} q_{jk} \Phi_{k} = \frac{R_j}{\aa},
		 \end{equation}
where $q_{jk} = \sum_{i=1}^m c_{ij}^k u_i$ and
\begin{multline}
\label{Rj}
R_j =\vec{h}_1(\alpha_j^2)u_j + \alpha_j^2 \vec{h}_1(u_j) - \frac{1}{2}\alpha_j^2 u_j \frac{\vec{h}_1(\aa^2)}{\aa^2} \\
 - \frac{1}{2} \sum_{i=1}^m X_j(\alpha_i^2) u_i^2 - \sum_{1 \leq i,k \leq m} c^k_{ij} \alpha_k^2 u_i u_k,
\end{multline}
\item for $k = m+1, \dots , n$,
		 \begin{equation}
		 \label{eq:221}
		 \vec{h}_1(\Phi_k) = \sum_{l= m+1}^{n} q_{kl} \Phi_{l} +\frac{1}{\aa} \sum_{i=1}^{m} u_i \left(\alpha_i^2 q_{ki} + \frac{X_k(\alpha_i^2)}{2} u_i\right).
		 \end{equation}
\end{itemize}
\end{Lemma}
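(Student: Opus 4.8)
The plan is to prove the equivalence directly, by writing the single vector-field identity \eqref{orbeq} in the adapted frame $\{Y_1,\dots,Y_n,\partial_{u_1},\dots,\partial_{u_n}\}$ and matching the $2n$ scalar components. The basic observation is that, since $\Phi$ is fiber-preserving, for a function $\varphi$ pulled back from $M$ one has $(\Phi_*\vec{h}_1(\lambda))(\varphi)=\vec{h}_1(\varphi)(\lambda)$, while for a fiber coordinate $u_j$ one has $(\Phi_*\vec{h}_1(\lambda))(u_j)=\vec{h}_1(u_j\circ\Phi)(\lambda)=\vec{h}_1(\Phi_j)(\lambda)$; on the other hand $\vec{h}_2(\Phi(\lambda))$ is obtained from the expression \eqref{eq:h2} of $\vec{h}_2$ by substituting the fiber coordinates $(\Phi_1,\dots,\Phi_n)$ of $\Phi(\lambda)$ for $u_1,\dots,u_n$, the functions $\alpha_i$ and $c^k_{ij}$ being kept at $q=\pi(\lambda)=\pi(\Phi(\lambda))$.

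First I would extract the horizontal part of \eqref{orbeq} by evaluating both sides on functions pulled back from $M$. Using $\pi_*Y_i=X_i$ together with \eqref{eq:h1} and \eqref{eq:h2}, this becomes the identity $\sum_{i=1}^{m}u_i X_i(q)=\aa\sum_{i=1}^{m}\tfrac{\Phi_i}{\alpha_i^2}X_i(q)$ of vectors in $D_q$; since $X_1(q),\dots,X_m(q)$ is a basis of $D_q$, matching coefficients gives \eqref{eq:first.m.coord}. To pin down $\aa$ I would use that $\Phi$, being an orbital diffeomorphism (hence a map $H_1\to H_2$) extended as in Remark~\ref{rem:H1}, satisfies $h_2(\Phi(\lambda))=h_1(\lambda)$; substituting \eqref{eq:first.m.coord} into $\tfrac12\sum_{i=1}^m\Phi_i^2/\alpha_i^2=\tfrac12\sum_{i=1}^m u_i^2$ yields $\aa^2=\bigl(\sum_{i=1}^m\alpha_i^2 u_i^2\bigr)/\bigl(\sum_{i=1}^m u_i^2\bigr)$, i.e.\ \eqref{eq:aa}. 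In the opposite direction \eqref{eq:aa} and \eqref{eq:first.m.coord} are simply assumed, so this step is reversed trivially.

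Next I would treat the vertical part, evaluating \eqref{orbeq} on each fiber coordinate $u_j$: the left-hand side is $\vec{h}_1(\Phi_j)$ and the right-hand side is $\aa$ times the $\partial_{u_j}$-coefficient of \eqref{eq:h2} evaluated at $\Phi(\lambda)$, namely $\aa\bigl(\sum_{i=1}^m\sum_{k=1}^n\tfrac{c^k_{ij}}{\alpha_i^2}\Phi_i\Phi_k-\sum_{i=1}^m\tfrac{1}{\alpha_i}X_j(\tfrac{1}{\alpha_i})\Phi_i^2\bigr)$. Substituting \eqref{eq:first.m.coord} for $\Phi_1,\dots,\Phi_m$ turns the first sum into $\tfrac{1}{\aa}\sum_{i,k\le m}c^k_{ij}\alpha_k^2 u_i u_k+\sum_{k=m+1}^n q_{jk}\Phi_k$, and, using $\alpha_i^3 X_j(1/\alpha_i)=-\tfrac12 X_j(\alpha_i^2)$, the second sum into $\tfrac{1}{2\aa}\sum_{i\le m}X_j(\alpha_i^2)u_i^2$. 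For $j>m$ one keeps $\vec{h}_1(\Phi_j)$ on the left and, after relabelling the structure constants so that $\sum_{i,k\le m}c^k_{ij}\alpha_k^2 u_i u_k=\sum_{i\le m}\alpha_i^2 u_i q_{ji}$, obtains exactly \eqref{eq:221}. For $j\le m$ one substitutes the known $\Phi_j=\alpha_j^2 u_j/\aa$ on the left, expands $\vec{h}_1\bigl(\alpha_j^2 u_j/\aa\bigr)$ by the Leibniz rule with $\vec{h}_1(\aa)=\vec{h}_1(\aa^2)/(2\aa)$, and moves all terms not involving $\Phi_{m+1},\dots,\Phi_n$ to the right; this gives $\sum_{k=m+1}^n q_{jk}\Phi_k=R_j/\aa$ with $R_j$ as in \eqref{Rj}. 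Reading these computations backwards yields the converse implication.

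I do not expect a genuine obstacle here: the statement is at bottom a bookkeeping computation, and indeed \cite{Z} carries it out. The one point requiring care is that \eqref{orbeq} alone does not determine the conformal factor $\aa$ --- the horizontal equations leave it free --- so in the forward direction one must really invoke the level-set preservation built into the notion of orbital diffeomorphism together with Remark~\ref{rem:H1}. The remaining difficulty is purely clerical: tracking signs and indices on the two sides, in particular reconciling $\sum c^k_{ij}\alpha_k^2 u_i u_k$ with $\sum\alpha_i^2 u_i q_{ji}$ via the symmetry $i\leftrightarrow k$, and keeping the quotient-rule contribution that becomes $-\tfrac12\alpha_j^2 u_j\,\vec{h}_1(\aa^2)/\aa^2$ inside $R_j$.
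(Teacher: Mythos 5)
Your proof is correct and is exactly the computation the paper defers to: the paper gives no proof of this lemma, citing \cite{Z} with the remark that ``this computation has been made in \cite{Z}'', and the argument there is precisely your direct component-matching of \eqref{orbeq} in the adapted frame (horizontal part via $\pi\circ\Phi=\pi$ giving \eqref{eq:first.m.coord}, level-set preservation giving \eqref{eq:aa}, and the $du_j$-components giving \eqref{eq:220}--\eqref{eq:221}). Your observation that \eqref{eq:aa} requires invoking $h_2\circ\Phi=h_1$ from the definition of orbital diffeomorphism, rather than \eqref{orbeq} alone, is the right point of care.
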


\begin{rk}
The spectral size $N$ is equal to $1$ if and only if $g_2$ is conformal to $g_1$ near $q_0$. In that case $g_2=\aa^2 g_1$ and $\alpha_1=\cdots = \alpha_m= \aa$. In particular the function $\aa$ does not depend on $u$, i.e.\ $\alpha(\lambda)$ depends only on $\pi(\lambda)$.
\end{rk}

This lemma gives directly the values of the first $m$ components of $\Phi$. The difficulty now is to find the other components from \eqref{eq:220} and \eqref{eq:221}. It is more convenient to replace the differential equations \eqref{eq:221} by infinitely many linear algebraic equations, forming the \emph{fundamental algebraic system} as described  by the following proposition.
%
\begin{Prop}
	 \label{Aprop}
	Let $\Phi$ be an orbital diffeomorphism between the extremal flows of $g_1$ and $g_2$ with coordinates $(\Phi_1, \dots, \Phi_n)$. Set $\widetilde{\Phi}=(\Phi_{m+1}, \dots, \Phi_n)$. Then $\widetilde{\Phi}$ satisfies a linear system of equations,
	 \begin{equation}
	 \label{A.phi.B}
	A \widetilde{\Phi} = b,
	 \end{equation}
where $A$ is a matrix with $(n-m)$ columns and an infinite number of rows, and $b$ is a column vector with an infinite number of rows. These infinite matrices can be decomposed in layers of $m$ rows as
\begin{equation}
\label{eq:A_and_b}
		A = \left(
		 \begin{array}{c}
		A^1 \\
		A^2 \\
		 \vdots \\
		A^{s} \\
		 \vdots \\
		 \end{array}
		 \right)
		 \qquad \hbox{and}
		 \qquad b= \left(
		 \begin{array}{c}
		b^1 \\
		b^2 \\
		 \vdots \\
		b^{s} \\
		 \vdots \\
		 \end{array}
		 \right),
\end{equation}
where the coefficients $a^s_{jk}$ of the $(m \times (n-m))$ matrix $A^s$, $s \in \N$,  are defined by induction as
		 \begin{equation}
		 \label{elem.A}
		 \left\{
		 \begin{aligned}
		& a^1_{j,k} = q_{jk}, \qquad  &1 \leq j \leq m, \ m < k  \leq n,\\
		& a^{s+1}_{j, k} = \vec{h}_1(a^s_{j,k}) + \sum_{l = m+1}^{n}a^s_{j,l} q_{l k}, \qquad  &1 \leq j \leq m, \ m < k  \leq n,
		 \end{aligned}
		 \right.
		 \end{equation}
(note that the columns of $A$ are numbered from $m+1$ to $n$ according to the indices of $\widetilde{\Phi}$) and the coefficients $b^{s}_j$, $1 \leq j \leq m$, of the vector $b^s \in \R^m$ are defined by
\begin{equation}
		 \label{term.of.b.rec.form}
		 \left\{
\begin{aligned}
		& b^{1}_{j} = \ \frac{R_j}{\aa}, \\
		& b^{s+1}_{j} = \ \vec{h}_1(b^s_{j}) - \frac{1}{\aa}\sum_{k = m+1}^{n} a^s_{j,k} \sum_{i=1}^{m} u_i \left(\alpha_i^2 q_{ki} + \frac{X_k(\alpha_i^2)}{2} u_i\right).  \\
		 \end{aligned}
		 \right.
\end{equation}
\end{Prop}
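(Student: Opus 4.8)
The plan is to prove, by induction on $s \in \N$, that the fiber coordinates $\widetilde\Phi = (\Phi_{m+1}, \dots, \Phi_n)$ of the orbital diffeomorphism $\Phi$ satisfy, as an identity of functions on the neighborhood $V$ of $\lambda_0$, the $s$th ``layer''
\begin{equation*}
\sum_{k=m+1}^{n} a^s_{j,k}\, \Phi_k = b^s_j \,, \qquad 1 \leq j \leq m \,,
\end{equation*}
where the functions $a^s_{j,k}$ and $b^s_j$ on $V$ are those defined by the recursions \eqref{elem.A} and \eqref{term.of.b.rec.form}. Once this holds for every $s$, stacking the layers is precisely the system \eqref{A.phi.B} with $A$ and $b$ decomposed as in \eqref{eq:A_and_b}. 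The base case $s=1$ is immediate: equation \eqref{eq:220} of Lemma~\ref{prop} is exactly the first layer, since $a^1_{j,k} = q_{jk}$ and $b^1_j = R_j/\aa$ by definition.

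For the inductive step, assume the $s$th layer holds on $V$. The key point is that $\vec h_1$, being a smooth vector field on $T^*M$, acts as a derivation on functions defined on $V$; applying it to both sides of the $s$th layer and using the Leibniz rule gives
\begin{equation*}
\sum_{k=m+1}^{n} \vec h_1\!\left(a^s_{j,k}\right) \Phi_k + \sum_{k=m+1}^{n} a^s_{j,k}\, \vec h_1(\Phi_k) = \vec h_1\!\left(b^s_j\right) \,.
\end{equation*}
Now substitute into the second sum the differential relation \eqref{eq:221} of Lemma~\ref{prop} for each $\vec h_1(\Phi_k)$, $m < k \leq n$. This produces a term $\sum_{k} a^s_{j,k}\sum_{l} q_{k,l}\Phi_l$, which, after renaming the summation indices $k \leftrightarrow l$, becomes $\sum_{k}\bigl(\sum_{l} a^s_{j,l}\, q_{l,k}\bigr)\Phi_k$, together with a $\widetilde\Phi$-free contribution coming from the second summand of \eqref{eq:221}. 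Collecting all coefficients of the unknowns $\Phi_k$ on the left-hand side yields
\begin{equation*}
\sum_{k=m+1}^{n}\!\left(\vec h_1\!\left(a^s_{j,k}\right) + \sum_{l=m+1}^{n} a^s_{j,l}\, q_{l,k}\right)\!\Phi_k = \sum_{k=m+1}^{n} a^{s+1}_{j,k}\, \Phi_k
\end{equation*}
by \eqref{elem.A}, while moving the remaining terms to the right-hand side gives $\vec h_1(b^s_j) - \frac{1}{\aa}\sum_{k=m+1}^{n} a^s_{j,k}\sum_{i=1}^{m} u_i\bigl(\alpha_i^2 q_{k,i} + \tfrac12 X_k(\alpha_i^2) u_i\bigr)$, which is exactly $b^{s+1}_j$ by \eqref{term.of.b.rec.form}. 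This is the $(s+1)$st layer, so the induction is complete.

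I do not expect a genuine obstacle here: the whole argument amounts to differentiating the purely algebraic constraint \eqref{eq:220} repeatedly along the flow of $\vec h_1$ and absorbing each derivative through the differential relation \eqref{eq:221}. The only point demanding care is the index bookkeeping — in particular, correctly distinguishing the ``row'' and ``column'' roles of the indices of $q_{\cdot,\cdot}$ when relabeling the double sum $\sum_k a^s_{j,k}\sum_l q_{k,l}\Phi_l$, and carrying the factor $1/\aa$ through the second summand of \eqref{eq:221} — a computation entirely parallel to the one performed in \cite{Z}.
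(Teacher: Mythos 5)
Your induction—base case from \eqref{eq:220}, then Lie-differentiating the $s$th layer along $\vec h_1$, substituting \eqref{eq:221} for each $\vec h_1(\Phi_k)$, and collecting coefficients to recover the recursions \eqref{elem.A} and \eqref{term.of.b.rec.form}—is exactly the proof given in the paper. The argument is correct, including the sign and the $1/\aa$ factor in $b^{s+1}_j$ and the $k\leftrightarrow l$ relabeling.
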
	

Note that $A$ is a function of $u$ and this function only depends on the choice of the local frame $\{X_1, \dots, X_n\}$. On the other hand the vector-valued function $b$ depends on $\{X_1, \dots, X_n\}$ and on $\{\alpha_1, \dots, \alpha_m\}$.

\begin{proof}
We have to prove that, for every $s \in \N$, the coordinates $\widetilde{\Phi}$ satisfy
\begin{equation}
		 \label{eq:As}
		A^s \widetilde{\Phi} = b^s.
\end{equation}
Observe first that \eqref{eq:220} is exactly $A^1 \widetilde{\Phi} = b^1$, so \eqref{eq:As} holds for $s=1$. Assume by induction that it holds for a given $s$. Thus we have, for $j=1,\dots,m$,
$$
\sum_{k=m+1}^{n}a^s_{j,k} \Phi_{k} = b^s_j.
$$
Taking the Lie derivative of these expressions by $\vec{h}_1$, we get
$$
 \sum_{k=m+1}^{n} \vec{h}_1(a^s_{j,k})\Phi_{k} + \sum_{k=m+1}^{n}a^s_{j,k} \vec{h}_1(\Phi_{k}) = \vec{h}_1(b^s_j).
$$
Replacing every term $\vec{h}_1(\Phi_{k})$ by its expression in \eqref{eq:221} and reorganizing, we obtain a new linear equation,

\begin{multline}
		 \label{eq:elements.a-b}
		 \sum_{k = m+1}^{n}\left(\vec{h}_1(a^s_{j,k}) + \sum_{l = m+1}^{n}a^s_{j,l}q_{lk} \right) \Phi_{k} =  \\
\vec{h}_1(b^s_j) - \frac{1}{\aa} \sum_{k=m+1}^{n} a^s_{j,k} \sum_{i=1}^{m} u_i \left(\alpha_i^2 q_{ki} + \frac{X_k(\alpha_i^2)}{2} u_i\right),
\end{multline}
which is exactly the $j$th row of $A^{s+1} \widetilde{\Phi} = b^{s+1}$. This ends the induction and then the proof of the proposition.
\end{proof}

\subsection{Injectivity of the fundamental algebraic system}
\label{se:injectivity}
The matrix $A$ appears to be strongly related to the Jacobi curves, and we will use the properties of the latter to deduce the non degeneracy of $A$. Let us denote by $u(\lambda)$ the coordinates of $\lambda \in T^*M$.

\begin{Prop}
\label{th:nonzer.min}
If $\lambda \in T^*M \setminus h_1^{-1}(0)$ is ample with respect to $g_1$, then $A(u(\lambda))$ is injective. As a consequence, there exists at least one $(n-m) \times (n-m)$ minor of the matrix $A(u)$ which is a non identically zero function of $u$.
\end{Prop}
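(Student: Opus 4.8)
The plan is to connect the rows of the matrix $A$ to the extensions $J^{(k)}_{\lambda}$ of the Jacobi curve of $g_1$, and then use the ampleness hypothesis together with Lemma~\ref{lem} (or rather its underlying mechanism) to force injectivity. The key observation is that the recursion \eqref{elem.A} defining $A^{s+1}$ from $A^s$ is exactly the recursion one gets by applying $\mathrm{ad}\,\vec{h}_1$ and projecting, which is the same recursion that produces $(\mathrm{ad}\vec{h}_1)^{k}Y_i(\lambda)$ in Lemma~\ref{le:jacobi}. Concretely, I would fix an ample covector $\lambda$ with $h_1(\lambda)\neq 0$, work in the adapted frame $\{Y_1,\dots,Y_n,\partial_{u_1},\dots,\partial_{u_n}\}$ at $q=\pi(\lambda)$, and write each vector $(\mathrm{ad}\vec{h}_1)^{s-1}Y_j(\lambda)$, $1\le j\le m$, in terms of the $Y_k(\lambda)$, $m<k\le n$, modulo $J^{(s-1)}_\lambda$ and the span of all the $Y_i$ with $i\le m$; the coefficients of $Y_k(\lambda)$ appearing here are precisely (up to sign/normalization) the entries $a^s_{jk}(u(\lambda))$. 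This identification is essentially already contained in the computation of Lemma~\ref{le:jacobi}; I would make it precise by unwinding that lemma's proof one more step to keep track of the $Y_k$-components rather than discarding them.

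Granting this identification, suppose for contradiction that $A(u(\lambda))$ is not injective: there is a nonzero vector $(c_{m+1},\dots,c_n)\in\R^{n-m}$ with $\sum_{k=m+1}^n a^s_{jk}(u(\lambda))\,c_k=0$ for all $s\in\N$ and all $1\le j\le m$. Set $w=\sum_{k=m+1}^n c_k\,Y_k(\lambda)$. The vanishing of all these coefficients means, using the above dictionary, that $w$ is annihilated by the symplectic form $\sigma_\lambda$ when paired against every vector of the form $(\mathrm{ad}\vec{h}_1)^{s}Y_j(\lambda)$, $1\le j\le m$, $s\ge 0$ — because the off-diagonal (i.e.\ $Y_k$-against-$\partial_{u_l}$) blocks of $\sigma_\lambda$ in a canonical frame are what pick out exactly those coefficients, while the $Y$-against-$Y$ and $\partial$-against-$\partial$ blocks vanish. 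By Lemma~\ref{le:jacobi}, the vectors $(\mathrm{ad}\vec{h}_1)^{s}Y_j(\lambda)$ for $0\le s\le k-1$, $1\le j\le m$, together with the vertical space $J^{(0)}_\lambda$ span $J^{(k)}_\lambda$. Since $\lambda$ is ample, $J^{(k_0)}_\lambda=T_\lambda(T^*M)$ for some $k_0$; hence $w$ is $\sigma_\lambda$-orthogonal to all of $T_\lambda(T^*M)$, and nondegeneracy of $\sigma_\lambda$ forces $w=0$. But the $Y_k(\lambda)$, $m<k\le n$, are part of a frame and hence linearly independent, so $(c_{m+1},\dots,c_n)=0$, a contradiction. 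This proves injectivity of $A(u(\lambda))$.

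For the ``as a consequence'' clause: $A(u(\lambda))$ injective means its (infinitely many) rows contain $n-m$ rows forming an invertible $(n-m)\times(n-m)$ submatrix, so the corresponding minor is nonzero \emph{at $u(\lambda)$}. Each entry $a^s_{jk}$ is a polynomial function of $u$ (this is visible from \eqref{elem.A}: $q_{jk}$ is linear in $u$, and $\vec{h}_1$ applied to a polynomial in $u$ with smooth coefficients in $q$ stays polynomial in $u$ by \eqref{eq:h1}), hence every $(n-m)\times(n-m)$ minor is a polynomial, in particular a smooth, function of $u$ on each fiber. A minor that is nonzero at one point $u(\lambda)$ is therefore not identically zero as a function of $u$. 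I would also invoke Theorem~\ref{th:Agr} to note that ample covectors exist (the set is open and dense in each fiber), so such a $\lambda$ is available. The only delicate point — and the one I'd expect to spend the most care on — is the bookkeeping in the first step: making the correspondence ``row $a^s_{j\bullet}$ of $A$ $\leftrightarrow$ the $Y_{m+1},\dots,Y_n$-components of $(\mathrm{ad}\vec h_1)^{s-1}Y_j(\lambda)$'' precise, including checking that the terms dropped ``mod'' in Lemma~\ref{le:jacobi} (the purely vertical part and the $Y_1,\dots,Y_m$ part) are exactly the directions that pair trivially, via $\sigma_\lambda$, with $w\in\mathrm{span}\{Y_{m+1}(\lambda),\dots,Y_n(\lambda)\}$, so that no information is lost in the reduction.
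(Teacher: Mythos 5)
Your overall strategy is the correct one and coincides with the paper's: the heart of the matter is the identification
$(\mathrm{ad}\,\vec h_1)^{s}Y_j(\lambda)=\sum_{k=m+1}^{n}a^s_{j,k}\,Y_k(\lambda)\ \mathrm{mod}\ J^{(s)}_\lambda$,
which the paper proves by exactly the induction you sketch (note your indexing is off by one: the row $a^s_{j,\cdot}$ corresponds to $(\mathrm{ad}\,\vec h_1)^{s}Y_j$, not $(\mathrm{ad}\,\vec h_1)^{s-1}Y_j$). The paper then concludes with a direct dimension count, $\mathrm{rank}\,A_s(u(\lambda))=\dim J^{(s+1)}_\lambda-(n+m)$, so that ampleness forces the rank to reach $n-m$; no symplectic pairing is needed. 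Your treatment of the ``as a consequence'' clause (polynomiality of the entries, existence of ample covectors via Theorem~\ref{th:Agr}) is fine.

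Your contradiction step, however, contains a genuine error. You set $w=\sum_{k>m}c_k\,Y_k(\lambda)$ and assert that $\sigma_\lambda\bigl(w,(\mathrm{ad}\,\vec h_1)^sY_j(\lambda)\bigr)=\sum_k a^s_{j,k}c_k$ because ``the $Y$-against-$Y$ and $\partial$-against-$\partial$ blocks vanish''. The horizontal space $\mathrm{span}\{Y_1,\dots,Y_n\}$ is \emph{not} Lagrangian for this adapted frame: a direct computation gives $\sigma_\lambda(Y_i,Y_j)=\pm\langle\lambda,[X_i,X_j]\rangle=\pm\sum_k c^k_{ij}u_k$, which is nonzero in general. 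Moreover your $w$ pairs nontrivially with the vertical space, $\sigma_\lambda(w,\partial_{u_k})=\mp c_k$, so it cannot be $\sigma_\lambda$-orthogonal to $J^{(0)}_\lambda\subset J^{(k_0)}_\lambda$ unless $c=0$ --- which is the conclusion you are after, not something your pairing identity actually delivers. The repair is to take $w$ \emph{vertical}: set $w=\sum_{k>m}c_k\,\partial_{u_k}$. Since the vertical space is Lagrangian and $\sigma_\lambda(\partial_{u_k},Y_j)=\pm\delta_{kj}$, this $w$ annihilates $J^{(1)}_\lambda=\mathrm{span}\{\partial_{u_1},\dots,\partial_{u_n},Y_1,\dots,Y_m\}$, and an induction on $s$ (the ``$\mathrm{mod}\ J^{(s)}_\lambda$'' remainders being killed by the induction hypothesis) gives $\sigma_\lambda\bigl(w,(\mathrm{ad}\,\vec h_1)^sY_j(\lambda)\bigr)=\pm\sum_{k>m}a^s_{j,k}c_k$. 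Then $c\in\ker A(u(\lambda))$ yields $\sigma_\lambda(w,\cdot)\equiv 0$ on $J^{(k)}_\lambda$ for every $k$, and ampleness together with the nondegeneracy of $\sigma_\lambda$ forces $w=0$, hence $c=0$. With that correction your argument becomes a valid (slightly longer) substitute for the paper's rank count.
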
	
This proposition results directly from the following lemma combined with Theorem~\ref{th:Agr}.

\begin{Lemma} \label{Theorem.rank}
Let $s$ be a positive integer. Denote by $A_s$ the $sm \times (n-m)$ matrix formed by the first  $s$ layers of $A$. Then
\begin{equation}
\label{rank.A}
\mathrm{rank} A_{s}(u) = \dim J^{(s+1)}_{\lambda}-n-m.
\end{equation}
\end{Lemma}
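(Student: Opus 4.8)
The plan is to establish the rank formula \eqref{rank.A} by relating the rows of $A_s$ to the generators of the extensions $J^{(k)}_\lambda$ of the Jacobi curve, as described in Lemma~\ref{le:jacobi}. First I would recall from Lemma~\ref{le:jacobi} that $J^{(k)}_\lambda = J^{(k-1)}_\lambda + \mathrm{span}\{(\mathrm{ad}\vec{h}_1)^{k-1} Y_1(\lambda), \dots, (\mathrm{ad}\vec{h}_1)^{k-1} Y_m(\lambda)\}$, and that $J^{(1)}_\lambda = J^{(0)}_\lambda + \mathrm{span}\{Y_1(\lambda), \dots, Y_m(\lambda)\}$ where $J^{(0)}_\lambda = \mathrm{span}\{\partial_{u_1}, \dots, \partial_{u_n}\}$ is the vertical space of dimension $n$. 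Thus $\dim J^{(1)}_\lambda = n + m$, and for $s \geq 1$,
\begin{equation}
\dim J^{(s+1)}_\lambda - n - m = \dim \left( \mathrm{span}\{\partial_{u_i}, Y_j, [\vec{h}_1, Y_j], \dots, (\mathrm{ad}\vec{h}_1)^{s} Y_j \} \right) - n - m,
\end{equation}
so the right-hand side of \eqref{rank.A} counts exactly the number of \emph{new} independent directions contributed by $[\vec{h}_1, Y_j], \dots, (\mathrm{ad}\vec{h}_1)^s Y_j$ modulo the vertical space and the $Y_j(\lambda)$'s.

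The key computational step is to compute $(\mathrm{ad}\vec{h}_1)^{k} Y_j(\lambda)$ in the adapted frame $\{Y_1, \dots, Y_n, \partial_{u_1}, \dots, \partial_{u_n}\}$ and to read off its $Y_{m+1}, \dots, Y_n$ components. Using \eqref{eq:h1} for $\vec{h}_1$ and the structure equations $[X_i, X_j] = \sum_k c^k_{ij} X_k$, a bracket calculation of the same flavor as in the proof of Lemma~\ref{le:jacobi} should show that, modulo $\mathrm{span}\{\partial_{u_1}, \dots, \partial_{u_n}\}$ and modulo $\mathrm{span}\{Y_1, \dots, Y_m\}$, the "non-horizontal-rank-$m$" part of $[\vec{h}_1, Y_j]$ is precisely $\sum_{k=m+1}^{n} q_{jk} Y_k$ with $q_{jk} = \sum_{i=1}^m c^k_{ij} u_i = a^1_{j,k}$. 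Iterating this bracket, the recursion $a^{s+1}_{j,k} = \vec{h}_1(a^s_{j,k}) + \sum_{l=m+1}^n a^s_{j,l} q_{lk}$ in \eqref{elem.A} is exactly the recursion obeyed by the $Y_{m+1}, \dots, Y_n$-components of $(\mathrm{ad}\vec{h}_1)^s Y_j(\lambda)$ modulo the lower-order subspace: the $\vec{h}_1(a^s_{j,k})$ term comes from differentiating the coefficients and the $\sum_l a^s_{j,l} q_{lk}$ term comes from bracketing the already-accumulated $Y_l$ terms against $\vec{h}_1$. Hence the $s$ layers $A^1, \dots, A^s$ of $A$ are, up to the identification $Y_k \leftrightarrow$ $k$-th column, the matrix of $Y$-components of the vectors $\{(\mathrm{ad}\vec{h}_1)^t Y_j(\lambda) : 1 \le t \le s, 1 \le j \le m\}$ modulo the span of $\{\partial_{u_i}\} \cup \{Y_j\}_{j\le m}$.

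The remaining step is a linear-algebra argument: adding these $sm$ vectors to the subspace $\mathrm{span}\{\partial_{u_1}, \dots, \partial_{u_n}, Y_1(\lambda), \dots, Y_m(\lambda)\}$ (which has dimension $n+m$ and on which each $(\mathrm{ad}\vec{h}_1)^t Y_j$ projects trivially in the $Y_{m+1},\dots,Y_n$ block), the dimension of the resulting span $J^{(s+1)}_\lambda$ exceeds $n+m$ by exactly the rank of the $sm \times (n-m)$ matrix of those $Y_{m+1},\dots,Y_n$-components, which is $A_s(u)$ — one must check that the "garbage" accumulating in the $\partial_{u_i}$ and $Y_1,\dots,Y_m$ slots does not create spurious dependencies, which holds precisely because those slots are already entirely contained in $J^{(1)}_\lambda$. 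This gives \eqref{rank.A}. The main obstacle I anticipate is bookkeeping in the iterated bracket computation: one must carefully track which terms land in $\mathrm{span}\{\partial_{u_i}\}$, which in $\mathrm{span}\{Y_1,\dots,Y_m\}$, and which in $\mathrm{span}\{Y_{m+1},\dots,Y_n\}$, and confirm that the $\vec{h}_1$-derivative appearing in \eqref{elem.A} is exactly the derivative of coefficients produced when commuting $\vec{h}_1$ past a coefficient function — essentially the same $\mathrm{mod}$-computation as in Lemma~\ref{le:jacobi} but carried one block finer. Once the bracket identity matches \eqref{elem.A} layer by layer, Proposition~\ref{th:nonzer.min} follows immediately: ampleness of $\lambda$ means $\dim J^{(k_0)}_\lambda = 2n$ for some $k_0$, so $\mathrm{rank} A_{k_0-1}(u(\lambda)) = 2n - n - m = n - m$, i.e. $A(u(\lambda))$ has full column rank $n-m$ and is injective; and since the maximal minors are polynomial in $u$, one of the $(n-m)\times(n-m)$ minors is not identically zero.
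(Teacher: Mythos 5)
Your plan is essentially the paper's proof: establish by induction the congruence $(\mathrm{ad}\vec{h}_1)^s Y_j = \sum_{k=m+1}^n a^s_{j,k}Y_k \mod J^{(s)}_{\lambda}$ using the bracket formula $[\vec{h}_1,Y_k]=\sum_l q_{kl}Y_l \mod J_{\lambda}$, match the resulting recursion with \eqref{elem.A}, and then combine with Lemma~\ref{le:jacobi} and $\dim J^{(1)}_{\lambda}=n+m$ to get \eqref{rank.A}. One caution on your bookkeeping: the congruence holds modulo $J^{(s)}_{\lambda}$, not modulo $J^{(1)}_{\lambda}=\mathrm{span}\{\partial_{u_i}, Y_1,\dots,Y_m\}$ as one of your sentences asserts --- for $s\geq 2$ the exact $Y_{m+1},\dots,Y_n$-components of $(\mathrm{ad}\vec{h}_1)^s Y_j$ pick up extra terms from bracketing the $Y_1,\dots,Y_m$-part of lower iterates against $\vec{h}_1$, but these corrections lie in $J^{(s)}_{\lambda}$ (hence in the span of $J^{(1)}_{\lambda}$ and the earlier layers), so the dimension count is unaffected.
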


\begin{proof}
We begin by proving that, for any positive integer $s$,
\begin{equation}
\label{JC.equal.A}
	 (\mathrm{ad} \vec{h})^{s}Y_j  = \sum_{k = m+1}^{n} a^s_{j,k} Y_{k} \mod J^{(s)}_{\lambda}, \quad 1 \leq j \leq m.
	 \end{equation}
Remark first that, for $k=1,\dots,n$,
\begin{eqnarray*}
[\vec{h}_1, Y_k]= \left[ \sum_{i=1}^{m} u_i Y_i + \sum_{i=1}^{m} \sum_{j,l=1}^{n} c^{l}_{ij} u_i u_l \partial_{u_j}, Y_k \right] &=& \sum_{i=1}^{m} u_i [Y_i,Y_k] \mod J_{\lambda}, \\ &=& \sum_{i=1}^{m}u_i \sum_{l=1}^n c^l_{ik} Y_l \mod J_{\lambda},
\end{eqnarray*}
which writes as
\begin{equation}
\label{h.in.coord}
[\vec{h}_1, Y_k]= \sum_{l=1}^{n} q_{kl} Y_l \mod J_{\lambda}.
\end{equation}
Let us prove \eqref{JC.equal.A} by induction on $s$. The case $s=1$ is a direct consequence of \eqref{h.in.coord} since the latter implies that, for $j=1,\dots,m$,
$$
[\vec{h}_1, Y_j]= \sum_{k=m+1}^{n} q_{jk} Y_k +  \sum_{k=1}^{m} q_{jk} Y_k \mod J_{\lambda} = \sum_{k = m+1}^{n} a^1_{j,k} Y_{k} \mod J^{(1)}_{\lambda}.
$$
Assume now that \eqref{JC.equal.A} is satisfied for a given $s$. Using the induction hypothesis, we write
$$
 (\mathrm{ad}\vec{h}_1)^{s+1} Y_j = \left[\vec{h}_1, (\mathrm{ad}\vec{h}_1)^{s}Y_j\right] = \left[\vec{h}_1, \sum_{k = m+1}^{n} a^s_{j,k} Y_{k}  \right] \mod J^{(s+1)}_{\lambda},
 $$
since $[\vec{h}_1, 	 J^{(s)}_{\lambda}] \subset J^{(s+1)}_{\lambda}$.
The last bracket above expands as
\begin{eqnarray*}
 \left[\vec{h}_1, \sum_{k = m+1}^{n} a^s_{j,k} Y_{k}  \right] &=& \sum_{k = m+1}^{n} \vec{h}_1 ( a^s_{j,k}) Y_{k} + \sum_{k = m+1}^{n} a^s_{j,k} \left[\vec{h}_1, Y_{k}  \right], \\
   &=&  \sum_{k = m+1}^{n} \vec{h}_1 ( a^s_{j,k}) Y_{k} + \sum_{k = m+1}^{n} a^s_{j,k} \sum_{l=1}^{n} q_{kl} Y_l \mod J_{\lambda},
\end{eqnarray*}
thanks to \eqref{h.in.coord}. Splitting and renumbering the second sum above, we obtain
\begin{eqnarray*}
 (\mathrm{ad}\vec{h}_1)^{s+1} Y_j &=&  \sum_{k = m+1}^{n} \left( \vec{h}_1 ( a^s_{j,k}) + \sum_{l = m+1}^{n} a^s_{j,l}  q_{lk} \right) Y_k + \sum_{l=1}^{m}\sum_{k = m+1}^{n} a^s_{j,k} q_{kl} Y_l \mod J^{(s+1)}_{\lambda},\\
   &=& \sum_{k = m+1}^{n} a^{s+1}_{j,k} Y_k  \mod J^{(s+1)}_{\lambda},
\end{eqnarray*}
which ends the induction and proves \eqref{JC.equal.A}.

Now,  from Lemma~\ref{le:jacobi}, for any positive integer $s$ there holds $J^{(s+1)}_{\lambda} = J^{(1)}_{\lambda} + \mathrm{span}\{ (\mathrm{ad}\vec{h}_1)^k Y_j (\lambda) \ \mid \ 1 \leq k \leq s, \ 1 \leq j \leq m\}$. Thus it results from~\eqref{JC.equal.A} that
$$
\dim J^{(s+1)}_{\lambda} = \dim J^{(1)}_{\lambda} + \mathrm{rank} A_s(u(\lambda)), \quad \hbox{where } A_s =  \left(
		 \begin{array}{c}
		A^1 \\
		A^2 \\
		 \vdots \\
		 A^{s}
		 \end{array}
		 \right).
$$
Since $\dim J^{(1)}_{\lambda}=n+m$ for any $\lambda$, the lemma is proved.
\end{proof}

A first consequence of the injectivity of $A$ is that the system of equations $A \widetilde{\Phi} = b$ is a sufficient condition for $\Phi$ to be an orbital diffeomorphism.

\begin{Prop}
\label{prop:inverse}
 Consider a local frame $\{ X_1,\dots,X_n\}$ of $D$ on an open subset $U \subset M$, and smooth positive functions $\alpha_1, \dots , \alpha_m$ on $U$. Let $A$ and $b$ be the associated matrices defined by \eqref{elem.A} and \eqref{term.of.b.rec.form}, and denote by $g_1$ and $g_2$ the sub-Riemannian metrics defined locally by the orthonormal frames $X_1,\dots,X_m$ and $\frac{X_1}{\alpha_1},\dots,\frac{X_m}{\alpha_m}$ respectively.

 Assume that $\lambda \in T^*U$ is ample with respect to $g_1$, and that there exists a solution $\widetilde{\Phi}=(\Phi_{m+1}, \dots, \Phi_n)$ to $A \widetilde{\Phi} = b$ near $\lambda$. Let $\Phi_1, \dots, \Phi_m$ be defined by \eqref{eq:first.m.coord}. Then the local smooth fiber-preserving map $\Phi: H_1 \to H_2$ defined by $u_i \circ \Phi=\Phi_i$, $i=1,\dots,n$, satisfies \eqref{orbeq}.
\end{Prop}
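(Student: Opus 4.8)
The strategy is to reverse-engineer the computation that led to Lemma~\ref{prop} and Proposition~\ref{Aprop}. We are given smooth data $\{X_1,\dots,X_n\}$ and $\alpha_1,\dots,\alpha_m$, and from them we define $\aa$ by \eqref{eq:aa}, the functions $\Phi_1,\dots,\Phi_m$ by \eqref{eq:first.m.coord}, and the remaining components $\Phi_{m+1},\dots,\Phi_n$ as the purported solution $\widetilde{\Phi}$ of $A\widetilde{\Phi}=b$. The goal is to check that the map $\Phi=(\Phi_1,\dots,\Phi_n)$ so defined satisfies \eqref{orbeq}, i.e.\ $\Phi_*\vec h_1(\lambda)=\aa(\lambda)\vec h_2(\Phi(\lambda))$, near the ample covector $\lambda$. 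By the ``only if'' direction being an equivalence in Lemma~\ref{prop}, it suffices to verify that $\Phi$ (with $\aa$ as above) satisfies the four bullet conditions \eqref{eq:aa}, \eqref{eq:first.m.coord}, \eqref{eq:220}, \eqref{eq:221}. The first two hold by construction. Condition \eqref{eq:220} is precisely the first layer $A^1\widetilde{\Phi}=b^1$, which is part of $A\widetilde{\Phi}=b$, so it holds as well. Thus the whole issue reduces to deriving \eqref{eq:221} from $A\widetilde{\Phi}=b$.

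First I would observe that $A\widetilde{\Phi}=b$, holding on a neighborhood of $\lambda$, may be differentiated along $\vec h_1$. Taking the Lie derivative of layer $s$, namely $A^s\widetilde\Phi=b^s$, and subtracting layer $s{+}1$, namely $A^{s+1}\widetilde\Phi=b^{s+1}$ (using the recursions \eqref{elem.A} and \eqref{term.of.b.rec.form} that \emph{define} $A^{s+1}$ and $b^{s+1}$), one obtains, exactly as in the computation \eqref{eq:elements.a-b} run backwards, the identity
\begin{equation*}
\sum_{k=m+1}^n a^s_{j,k}\left(\vec h_1(\Phi_k)-\sum_{l=m+1}^n q_{kl}\Phi_l-\frac{1}{\aa}\sum_{i=1}^m u_i\Bigl(\alpha_i^2 q_{ki}+\frac{X_k(\alpha_i^2)}{2}u_i\Bigr)\right)=0
\end{equation*}
for every $s\ge1$ and $1\le j\le m$. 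In matrix form, if we denote by $E=(E_{m+1},\dots,E_n)$ the column vector whose entries $E_k$ are the expressions in the large parentheses above (the ``defect'' in \eqref{eq:221}), then $A^s E=0$ for all $s$, hence $AE=0$.

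Now I invoke the injectivity of $A(u(\lambda))$ from Proposition~\ref{th:nonzer.min}: since $\lambda$ is ample with respect to $g_1$ and $\lambda\notin h_1^{-1}(0)$ (we may assume this, covectors in $h_1^{-1}(0)$ being excluded because we work on $T^*U$ where $h_1$ is genuinely a nonzero quadratic form restricted to the relevant set; more carefully, ampleness already forces $\lambda$ not to be abnormal on small intervals, and one restricts to the open dense ample set where $A$ is injective), the only solution of $A(u(\lambda))E=0$ is $E=0$. Hence $E_k(\lambda)=0$ for $k=m+1,\dots,n$, which is exactly \eqref{eq:221} at $\lambda$. Since ample covectors are open and dense (Theorem~\ref{th:Agr}) and $E$ is continuous in $u$, the vanishing of $E$ extends to a full neighborhood. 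Having verified all four conditions of Lemma~\ref{prop}, we conclude that $\Phi$ satisfies \eqref{orbeq}; and since the $\Phi_k$ are built so that $\Phi$ is fiber-preserving with the first $m$ coordinates given by \eqref{eq:first.m.coord} (an invertible expression in $u_1,\dots,u_m$ away from $h_1^{-1}(0)$), after possibly shrinking the neighborhood $\Phi$ is a diffeomorphism onto its image, hence an orbital diffeomorphism between the extremal flows of $g_1$ and $g_2$.

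**Main obstacle.** The routine part is the backward version of the induction \eqref{eq:elements.a-b}; the only genuinely delicate point is justifying that $AE=0$ together with injectivity of $A$ forces $E=0$ \emph{as a germ of smooth function}, not merely pointwise at the single covector $\lambda$. This is handled by continuity plus the density of ample covectors, but one must be a little careful that $E$ is defined and smooth on the whole neighborhood where $\widetilde\Phi$ solves $A\widetilde\Phi=b$, and that ampleness (hence injectivity of $A$) holds on an open dense subset of that neighborhood so that $E$ vanishes on a dense set and therefore identically. A secondary point worth a sentence is checking that $\Phi$ is indeed a local diffeomorphism: its differential in the fiber directions is controlled by \eqref{eq:first.m.coord} in the first $m$ slots and by the fact that the equations determining $\widetilde\Phi$ come from an injective $A$, so $\widetilde\Phi$ depends smoothly and, generically, submersively on $\lambda$; shrinking $V$ around $\lambda$ makes $\Phi$ a diffeomorphism onto its image.
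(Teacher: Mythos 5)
Your proposal is correct and follows essentially the same route as the paper: reduce to the four conditions of Lemma~\ref{prop}, note that the first layer of $A\widetilde\Phi=b$ gives \eqref{eq:220}, and obtain \eqref{eq:221} by differentiating layer $s$ along $\vec h_1$, subtracting layer $s+1$, and concluding that the defect vector lies in $\ker A$, which is trivial near $\lambda$ by Proposition~\ref{th:nonzer.min}. The paper closes the last step slightly more directly — full rank of $A(u)$ at $u(\lambda)$ is an open condition, so the defect vanishes on a whole neighborhood without appealing to density of ample covectors — but this is only a cosmetic difference.
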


\begin{proof}
Following Lemma~\ref{prop}, it is sufficient to prove that $\widetilde{\Phi}$ satisfies \eqref{eq:220} and \eqref{eq:221} near  $\lambda$. The equations of the first layer, i.e.\ $A^1\widetilde{\Phi} = b^1$, are exactly \eqref{eq:220}, hence we are left with the task of proving that $\widetilde{\Phi}$ satisfies \eqref{eq:221}.

Fix a positive integer $s$ and $j \in \{1,\dots,m\}$. Let us write the $j$th row of the system $A^{s} \Phi = b^{s}$,
$$
\sum_{k = m+1}^{n} a^s_{j,k} \Phi_{k} = b^s_j,
$$
 and differentiate this expression  in the direction $\vec{h}_1$. We thus obtain
$$
\sum_{k = m+1}^{n} \vec{h}_1 (a^s_{j,k}) \Phi_{k} + \sum_{k = m+1}^{n} a^s_{j,k} \vec{h}_1 (\Phi_{k}) = \vec{h}_1 (b^s_j).
$$
Write now the $j$th row of the system $A^{s+1} \Phi = b^{s+1}$, replacing the coefficients  by their recurrence formula,
\begin{multline*}
\sum_{k= m+1}^{n} \vec{h}_1(a^s_{j,k})\Phi_{k}  + \sum_{k,l= m+1}^{n} a^s_{j,l} q_{l,k}  \Phi_{k} \\
= \vec{h}_1(b^s_j)  - \frac{1}{\aa} \sum_{k = m+1}^{n} a^s_{j,k} \sum_{i=1}^{m} u_i \left(\alpha_i^2 q_{ki} + \frac{X_k(\alpha_i^2)}{2} u_i\right),
\end{multline*}
and take the difference between the last two formulas. Rearranging the order of summation we obtain
\begin{equation}
\label{eq:Psi}
\sum_{k = m+1}^{n} a^s_{j,k} \left( \vec{h}_1(\Phi_k) - \sum_{l= m+1}^{n} q_{k,l} \Phi_{l} +\frac{1}{\aa} \sum_{i=1}^{m} u_i \left(\alpha_i^2 q_{ki} + \frac{X_k(\alpha_i^2)}{2} u_i\right) \right) = 0.
\end{equation}

Denote by $\Psi_{k}$ the terms inside the bracket above, and set $\Psi=(\Psi_{m+1}, \dots, \Psi_{n})$. Formula \eqref{eq:221} for $\widetilde{\Phi}$ is exactly $\Psi=0$. From \eqref{eq:Psi}, the vector $\Psi$ satisfies the system $A \Psi = 0$. Moreover, by Proposition~\ref{th:nonzer.min} the matrix $A(u)$ has full rank at $u=u(\lambda)$, and hence in a neighborhood of $u(\lambda)$ in $T^*M$. On this neighborhood $\Psi$ must be identically zero, which implies that $\widetilde{\Phi}$ satisfies \eqref{eq:221}. The statement is proved.
\end{proof}


\section{First divisibility and consequences}
\label{se:integrability}

\subsection{First divisibility}
In \cite{Z}, Zelenko introduced an algebraic condition called \emph{first divisibility condition}, which implies interesting conditions on the eigenvalues $\alpha_i^2$ and on the structure coefficients.

Consider two sub-Riemannian metric $g_1,g_2$ on $(M,D)$,  a stable point $q_0$ with respect to these metrics, and introduce as in Section~\ref{se:coord_orbdiffeo} a frame $\{X_1,\dots,X_n\}$ adapted to $(g_1,g_2)$ and the associated coordinates $(u_1,\dots,u_n)$ on the fibers of $T^*M$.

Set $\mathcal{P}= \alpha_1^2 u_1^2 + \cdots + \alpha_m^2 u_m^2$, where $\alpha_1^2, \dots,\alpha_m^2$ are the eigenvalues of the transition operator. Note that $\mathcal{P}$ and its Lie-derivative $\vec{h}_1(\mathcal{P})$ are polynomial functions on the fiber, i.e.\ polynomial functions of $u$ (see \cite[Eq.~(2.30)]{Z} for an intrinsic definition of $\mathcal{P}$). We say that the ordered pair of sub-Riemannian metrics $(g_1,g_2)$ satisfies \emph{the first divisibility condition} if the polynomial $\vec{h}_1(\mathcal{P})$ is divisible by $\mathcal{P}$.

\begin{Prop}[\cite{Z}, Proposition 6]
\label{prop:from.div}
	If $(g_1, g_2)$ and $(g_2,g_1)$ satisfy the first divisibility condition in a neighborhood $U$ of a stable point $q_0$, then for any $q \in U$ the following properties hold:
	 \begin{itemize}
	\item $\text{for any } 1 \leq i,j \leq m$, $\quad [X_i,X_j](q) \notin D(q) \ \Rightarrow \ \alpha_i(q) = \alpha_j(q)$;\smallskip
\item $X_i\left(\frac{\alpha_j^2}{\alpha_i^2}\right) = 2 c^j_{ij} \left(1 - \frac{\alpha_j^2}{\alpha_i^2}\right)\quad \text{for any } \ 0 \leq i,j \leq m$; \smallskip
		 \item $X_i\left(\frac{\alpha_j^2}{\alpha_i}\right) = 0, \quad \alpha_j \neq \alpha_j$;\smallskip
		 \item $X_i\left(\frac{\alpha_j}{\alpha_k}\right) = 0, \quad \alpha_i \neq \alpha_j,\ \alpha_i \neq \alpha_k$.
	 \end{itemize}
\end{Prop}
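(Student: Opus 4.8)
The statement is Proposition~\ref{prop:from.div}, which should be extracted from the first divisibility condition. I would start from the hypothesis that both $\vec h_1(\Pp)$ is divisible by $\Pp = \sum \alpha_i^2 u_i^2$ and (symmetrically, after swapping the roles of $g_1$ and $g_2$) the analogous polynomial for $(g_2,g_1)$ is divisible by the analogous $\Pp$. The first step is to compute $\vec h_1(\Pp)$ explicitly in the adapted frame using \eqref{eq:h1}. Since $\vec h_1 = \sum_i u_i Y_i + \sum \ldots \partial_{u_j}$ and $\alpha_i^2$ are functions on $M$ (so $Y_j$-derivatives act as $X_j$), I get
\begin{equation*}
\vec h_1(\Pp) = \sum_{i=1}^m \sum_{j=1}^m X_j(\alpha_i^2) u_j u_i^2 + 2 \sum_{i,j=1}^m \alpha_i^2 u_i \vec h_1(u_j)\,[\text{terms}],
\end{equation*}
and here $\vec h_1(u_j) = \sum_{i,k} c^k_{ij} u_i u_k$; so $\vec h_1(\Pp)$ is a homogeneous cubic polynomial in $u = (u_1,\dots,u_n)$, but crucially one whose dependence on $u_{m+1},\dots,u_n$ is only linear (it enters through the $\vec h_1(u_j)$ term via $c^k_{ij} u_i u_k$ with $k$ possibly $> m$), while $\Pp$ depends only on $u_1,\dots,u_m$ and is quadratic in those. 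The key structural observation is that $\Pp$ divides $\vec h_1(\Pp)$ as polynomials in $u$: since $\Pp$ has no $u_{m+1},\dots,u_n$, the quotient must be linear of the form $\sum_{j} \mu_j u_j$ for some functions $\mu_j$ on $M$, and we can match coefficients.

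**Coefficient matching.** The heart of the argument is comparing monomials on both sides of $\vec h_1(\Pp) = \bigl(\sum_{j} \mu_j u_j\bigr)\Pp$. I would separate this into: (i) the coefficient of each monomial $u_k u_i u_i$ with $k > m$ (coming only from the bracket term on the left) gives a relation forcing, whenever $c^k_{ij} \neq 0$ for some $k > m$ — i.e. whenever $[X_i,X_j](q)\notin D(q)$ — that $\alpha_i^2 = \alpha_j^2$; this is the first bullet. (ii) The coefficient of monomials $u_j u_i^2$ with all indices $\leq m$, together with the analogous computation for $(g_2,g_1)$ (which replaces $\alpha_i^2$ by $1/\alpha_i^2$ and $c^k_{ij}$ by $c^k_{ij}/\alpha_i^2$ plus the extra torsion-type term visible in \eqref{eq:h2}), yields a linear system in $X_j(\alpha_i^2)$ and $X_j(1/\alpha_i^2)$. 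Solving it should produce the PDE $X_i(\alpha_j^2/\alpha_i^2) = 2c^j_{ij}(1 - \alpha_j^2/\alpha_i^2)$, the second bullet. The last two bullets then follow by combining the second bullet written for the pair $(i,j)$ and for the pair $(i,k)$ and expanding $X_i(\alpha_j^2/\alpha_i) = X_i\bigl((\alpha_j^2/\alpha_i^2)\cdot\alpha_i\bigr)$ and $X_i(\alpha_j/\alpha_k)$ via the product/chain rule, using that when $\alpha_i \neq \alpha_j$ the coefficient $c^j_{ij}$ (or rather the relevant structure coefficient) is constrained — actually one uses $X_i(\alpha_i) = $ something from the $i=j$ case and cancels. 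I would need to be careful about the degenerate index cases (the bullets even write "$0 \leq i,j \leq m$" and "$\alpha_j \neq \alpha_j$", which look like typos for $1 \leq i,j \leq m$ and $\alpha_i \neq \alpha_j$).

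**Main obstacle.** Since this is quoted verbatim from \cite[Prop.~6]{Z}, the cleanest route is really to cite it, but if a self-contained proof is wanted the genuinely delicate part is the bookkeeping in step (ii): the left-hand side $\vec h_1(\Pp)$ mixes a symmetric-in-$u$ piece ($X_j(\alpha_i^2)u_j u_i^2$, already symmetric-looking) with a piece built from the structure constants $c^k_{ij}u_i u_k$, and one must symmetrize carefully over all ways a given monomial $u_au_bu_c$ arises, keeping track of the factor-of-$2$ combinatorics and of which $c^k_{ij}$ are forced to vanish by the first bullet. Running the same computation for $(g_2,g_1)$ and subtracting is what isolates $X_i(\alpha_j^2/\alpha_i^2)$ cleanly; getting the torsion term from \eqref{eq:h2} (the $-\frac{1}{\alpha_i}X_j(\frac{1}{\alpha_i})u_i^2 \partial_{u_j}$ contribution) to land exactly as the inhomogeneous term $2c^j_{ij}$ is the sanity check that would consume most of the effort. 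I expect no conceptual difficulty beyond this — it is entirely an exercise in polynomial identities on the fibers of $T^*M$ — but the algebra is the obstacle.
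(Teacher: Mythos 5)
The paper offers no proof of this proposition: it is imported verbatim, with attribution, from \cite{Z} (Proposition 6), so the ``paper's own proof'' is exactly the citation you identify as the cleanest route. Your sketch of a self-contained argument is nonetheless essentially the correct one, and its first half is carried out in miniature in the paper's proof of Lemma~\ref{le:PQ}: expand $\vec h_1(\mathcal P)=\sum_{i,j\le m}X_i(\alpha_j^2)u_iu_j^2+2\sum_{i,j\le m}\sum_{k\le n}c^k_{ij}\alpha_j^2u_iu_ju_k$, write the quotient as a linear form $Q=\sum_l p_lu_l$, and match monomials; the structural observations you make (only the bracket term involves $u_{m+1},\dots,u_n$, and it does so linearly) are right.

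Two corrections to the bookkeeping, though. First, the monomial that yields the first bullet is $u_iu_ju_k$ with $i\neq j\le m$ and $k>m$: its coefficient on the left is $2c^k_{ij}\alpha_j^2+2c^k_{ji}\alpha_i^2=2c^k_{ij}(\alpha_j^2-\alpha_i^2)$ and on the right is $0$, since $Q$ and $\mathcal P$ involve only $u_1,\dots,u_m$. The monomial $u_i^2u_k$ with $k>m$ that you actually name has left-hand coefficient $2c^k_{ii}\alpha_i^2=0$ and only serves to show $p_k=0$ for $k>m$; it cannot produce the implication about $c^k_{ij}$ with $i\neq j$. Second, the second bullet does \emph{not} require the symmetric condition on $(g_2,g_1)$ or any linear system: comparing coefficients of $u_i^2u_j$ with $i\neq j\le m$ gives $X_j(\alpha_i^2)+2c^i_{ij}(\alpha_j^2-\alpha_i^2)=\alpha_i^2\,X_j(\alpha_j^2)/\alpha_j^2$, and after relabelling $i\leftrightarrow j$ and dividing by $\alpha_i^2$ this rearranges precisely to $X_i(\alpha_j^2/\alpha_i^2)=2c^j_{ij}(1-\alpha_j^2/\alpha_i^2)$. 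It is only the last two bullets that need the $(g_2,g_1)$ divisibility, obtained by the same coefficient comparison in the frame $X_i/\alpha_i$ adapted to $(g_2,g_1)$ (with eigenvalues $1/\alpha_i^2$ and modified structure coefficients) and then combined with the second bullet; your plan is correct there, and you are also right that ``$0\le i,j\le m$'' and ``$\alpha_j\neq\alpha_j$'' in the statement are typos for $1\le i,j\le m$ and $\alpha_i\neq\alpha_j$.
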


It appears actually that this condition is always fulfilled by pairs of metrics whose Hamiltonian vector fields are orbitally diffeomorphic.

\begin{Prop}
\label{prop:divty}
If $\vec{h}_1, \vec{h}_2$ are orbitally diffeomorphic near some $\lambda \in \pi^{-1}(q_0)$, then $(g_1,g_2)$ and $(g_2,g_1)$ satisfy the first divisibility condition near $q_0$.
\end{Prop}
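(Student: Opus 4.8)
The plan rests on two elementary reductions together with an analysis of the fundamental algebraic system. Since the set where $\vec h_1$ and $\vec h_2$ are orbitally diffeomorphic is open and ample covectors are dense in each fibre (Theorem~\ref{th:Agr}), we may assume $\lambda$ ample with respect to $g_1$. Moreover the inverse $\Phi^{-1}$ of an orbital diffeomorphism $\Phi$ is again a fibre-preserving diffeomorphism, now carrying integral curves of $\vec h_2$ to those of $\vec h_1$ (read~\eqref{orbeq} backwards, with multiplier $(\aa\circ\Phi^{-1})^{-1}$), and the frame $\frac{X_1}{\aa_1},\dots,\frac{X_m}{\aa_m}$, completed to a frame of $TM$, is adapted to the ordered pair $(g_2,g_1)$. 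Hence it is enough to establish the first divisibility condition for $(g_1,g_2)$, the case of $(g_2,g_1)$ following by applying the result to that ordered pair and $\Phi^{-1}$. Finally, by~\eqref{eq:aa} we have $\aa^2=\mathcal{P}/(2h_1)$, and since $\vec h_1(h_1)=0$ this gives $\vec h_1(\aa^2)/\aa^2=\vec h_1(\mathcal{P})/\mathcal{P}$; so the first divisibility condition for $(g_1,g_2)$ is equivalent to the statement that the rational function $\vec h_1(\mathcal{P})/\mathcal{P}$ on the fibres is in fact a polynomial.

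Put $\phi_k:=\aa\,\Phi_k$, so $\widetilde{\phi}:=(\phi_{m+1},\dots,\phi_n)=\aa\,\widetilde{\Phi}$. By~\eqref{eq:first.m.coord}, $\phi_k=\aa_k^2u_k$ is a polynomial for $k\le m$. Multiplying the system $A\widetilde{\Phi}=b$ of Proposition~\ref{Aprop} by $\aa$ and using the identity $\vec h_1(\aa^{-1})=-\tfrac12\aa^{-1}\,\vec h_1(\mathcal{P})/\mathcal{P}$ (a consequence of $\aa^2=\mathcal{P}/(2h_1)$ and $\vec h_1(h_1)=0$), one checks from~\eqref{Rj} and the recursion~\eqref{term.of.b.rec.form} that $\widetilde{\phi}$ solves $A\widetilde{\phi}=c$, where $A$ is the same matrix as in Proposition~\ref{Aprop} — its entries~\eqref{elem.A} are polynomials in $u$ built from the structure coefficients $c^k_{ij}$ only, with no dependence on the eigenvalues $\aa_i^2$ — and where the entries of $c$ are rational functions of $u$ whose only denominators are powers of $\mathcal{P}$; the first block is $c^1_j=R_j=P_j-\tfrac12\aa_j^2u_j\,\vec h_1(\mathcal{P})/\mathcal{P}$ with $P_j$ polynomial, by~\eqref{Rj}. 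Since $\lambda$ is ample, $A(u(\lambda))$ has full column rank $n-m$ by Proposition~\ref{th:nonzer.min}; choosing $n-m$ rows of $A$ whose minor $\Delta$ is nonzero at $u(\lambda)$ and applying Cramer's rule to that square subsystem, we obtain that each $\phi_k$, $k>m$, is a rational function of $u$ whose reduced denominator divides $\Delta$ times a power of $\mathcal{P}$.

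The crux is to show that $\mathcal{P}$ does not divide the reduced denominator of $\phi_k$ for $k>m$ — in fact one expects $\aa\,\Phi_{m+1},\dots,\aa\,\Phi_n$ to be polynomial on the fibres, exactly as $\aa\,\Phi_1,\dots,\aa\,\Phi_m$ are by~\eqref{eq:first.m.coord}. Granting this, the conclusion is quick. In the first-layer equation $\sum_{k>m}q_{jk}\phi_k=R_j$ (that is, $A^1\widetilde{\phi}=c^1$, cf.~\eqref{eq:220}) the left-hand side is, for each $j\le m$, a rational function of $u$ with reduced denominator prime to $\mathcal{P}$, hence so is $R_j$; since $R_j=P_j-\tfrac12\aa_j^2u_j\,\vec h_1(\mathcal{P})/\mathcal{P}$ with $P_j$ polynomial, this forces $\mathcal{P}$ to divide $u_j\,\vec h_1(\mathcal{P})$. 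Because $D$ is bracket generating we have $m=\mathrm{rank}\,D\ge 2$, so $\mathcal{P}=\aa_1^2u_1^2+\cdots+\aa_m^2u_m^2$ is an irreducible quadratic form; as $\deg u_j=1<2=\deg\mathcal{P}$, it follows that $\mathcal{P}\mid\vec h_1(\mathcal{P})$, which is the first divisibility condition for $(g_1,g_2)$. (When $m=1$, which forces $n=1$, the statement is trivial: there $\mathcal{P}=\aa_1^2u_1^2$ and $\vec h_1=u_1Y_1$, so $u_1^2$ and a fortiori $\mathcal{P}$ divides $\vec h_1(\mathcal{P})$.)

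I expect the denominator control of the previous paragraph to be the main difficulty. Plain smoothness of $\Phi$ is insufficient: a positive-definite form such as $\mathcal{P}$ can occur in the reduced denominator of a rational function that is smooth off $h_1^{-1}(0)$, so the polynomiality of $\aa\,\Phi_{m+1},\dots,\aa\,\Phi_n$ must be read off the full infinite system $A\widetilde{\phi}=c$ rather than from regularity alone. The natural tool is Lemma~\ref{Theorem.rank}, identifying the ranks of the truncations of $A$ with the dimensions of the extensions $J^{(s+1)}_{\lambda}$ of the Jacobi curve: one wants to use this rank stability to propagate to $\widetilde{\phi}$ the ``$1/\aa$ times a polynomial'' form already present in the first $m$ components. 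Everything else above is routine algebra, and once this step is secured the case of $(g_2,g_1)$ follows from the symmetry noted at the start.
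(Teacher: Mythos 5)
Your setup coincides with the paper's up to the point you flag yourself: the reduction to the ordered pair $(g_1,g_2)$ (with $(g_2,g_1)$ handled via $\Phi^{-1}$, a case the paper's own proof leaves implicit), the identity $\vec h_1(\aa^2)/\aa^2=\vec h_1(\mathcal P)/\mathcal P$, Cramer's rule applied to a nonzero maximal minor of $A$, and the endgame that $\mathcal P\mid u_j\,\vec h_1(\mathcal P)$ plus irreducibility of the positive quadratic form $\mathcal P$ gives $\mathcal P\mid\vec h_1(\mathcal P)$. But the step you defer --- that $\mathcal P$ is coprime to the reduced denominator of $\aa\Phi_k$ for $k>m$ --- is a genuine gap, and the route you propose for closing it is essentially circular: by Cramer's rule the $\mathcal P$-content of the solution's denominator is controlled by the $\mathcal P$-content of the right-hand side, whose first layer $R_j=P_j-\tfrac12\alpha_j^2u_j\,\vec h_1(\mathcal P)/\mathcal P$ is free of $\mathcal P$ in its denominator precisely when the divisibility you are trying to prove already holds. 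Rank stability of the truncations $A_s$ (Lemma~\ref{Theorem.rank}) does not by itself break this circle, and smoothness of $\Phi$ cannot, as you correctly note.

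The paper's proof sidesteps the issue: it never shows the solution is $\mathcal P$-denominator-free, only that the exponent of $\mathcal P$ in the denominator of each $\Phi_k$ is \emph{bounded independently of the layer}, via a single application of Cramer's rule to one fixed $(n-m)\times(n-m)$ subsystem, giving $\Phi_k=\frac{1}{\aa\,\delta\,\mathcal P^{S}}\times\mathrm{poly}(u)$ for a fixed $S$. The ingredient missing from your write-up is the explicit induction on the layers of $b$,
\begin{equation}
b^s_j=\frac{C_s\,\alpha_j^2u_j}{\aa}\Bigl(\frac{\vec h_1(\mathcal P)}{\mathcal P}\Bigr)^{s}+\frac{1}{\aa\,\mathcal P^{s-1}}\,\mathrm{pol}_{s,j}(u),\qquad C_s>0.
\end{equation}
If $\mathcal P\nmid\vec h_1(\mathcal P)$, then since $\mathcal P$ is irreducible and divides neither $u_j$ nor $\vec h_1(\mathcal P)^s$, the reduced denominator of $b^s_j$ carries exactly $\mathcal P^{s}$, while the left-hand side $\sum_k a^s_{j,k}\Phi_k$ carries a power of $\mathcal P$ bounded by $S$ plus the multiplicity of $\mathcal P$ in $\delta$; taking $s$ large yields the contradiction. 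So the mechanism is a comparison of denominator growth across infinitely many layers against a uniform bound on the solution, not a regularity statement about the solution itself. Replacing your ``crux'' paragraph by this computation makes the argument complete; the rest of your proposal is correct.
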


\begin{proof}
Let $\Phi$ be the orbital diffeomorphism between the extremal flows of $g_1,g_2$. From Proposition~\ref{Aprop}, the $n-m$ last coordinates of $\Phi$ satisfy $A \widetilde{\Phi} = b$. Let us give first some algebraic properties of the components $\Phi_i$.

Notice that
$$
\aa^2 = \frac{\mathcal{P}}{h_1},
$$
which implies that
\begin{equation}
\label{eq:h1P}
\frac{\vec{h}_1(\mathcal{P})}{\mathcal{P}} = \frac{\vec{h}_1(\aa^2)}{\aa^2}.
\end{equation}
Using this remark, a simple induction argument shows that, for any positive integer $s$, there exists a constant $C_s>0$ and polynomial functions $\mathrm{pol}_{s,j}(u)$ on the fiber such that
\begin{equation}
\label{eq:alphab}
 b_j^s = \frac{C_s \alpha_j^2 u_j}{\aa} \left( \frac{\vec{h}_1(\mathcal{P})}{\mathcal{P}} \right)^s + \frac{1}{\aa \mathcal{P}^{s-1}} \mathrm{pol}_{s,j}(u), \qquad j=1,\dots,m.
\end{equation}
From Proposition~\ref{th:nonzer.min}, the matrix $A$ admits at least one nonzero maximal minor $\delta$. Since all coefficients of $A$ are polynomial functions of $u$, $\delta$ is in turn polynomial in $u$. Using Cramer's rule, we deduce from \eqref{eq:alphab} that there exists an integer $S$ such that, for $i=m+1,\dots,n$,
\begin{equation}
\label{eq:Phipol}
\Phi_i = \frac{1}{\aa \delta \mathcal{P}^{S}} \times \hbox{polynomial in } u.
\end{equation}

Let us prove now the divisibility of  $\vec{h}_1(\mathcal{P})$ by $\mathcal{P}$. Choose an arbitrary large integer $s$ ($s>S$) and $j \in \{1,\dots,m\}$, and consider the $j$th equation of the $s$th layer of the system \eqref{A.phi.B},
$$
	a^s_{j, m+1} \Phi_{m+1} + \dots + a^s_{j, n-m} \Phi_{n-m} = b^s_j.
$$
Recall that all coefficients $a^s_{i,j}$ are polynomial functions of $u$. Substituting expressions \eqref{eq:alphab} and \eqref{eq:Phipol} for $b^s_j$ and  the $\Phi_i$'s respectively, we get,
$$
\frac{C_s \alpha_j^2 u_j}{\aa} \left( \frac{\vec{h}_1(\mathcal{P})}{\mathcal{P}} \right)^s + \frac{1}{\aa \mathcal{P}^{s-1}} \mathrm{pol}_{s,j}(u) = \frac{1}{\aa \delta \mathcal{P}^{S}} \times \hbox{polynomial in } u.
$$
Multiplying by $\aa \mathcal{P}^{s}$, we obtain finally,
\begin{equation}
\label{eq:irreduc}
C_s \alpha_j^2 u_j \vec{h}_1(\mathcal{P})^s = \mathcal{P} \mathrm{pol}_{s,j}(u) + \frac{\mathcal{P}^{s-S}}{\delta} \times \hbox{polynomial in } u.
\end{equation}

Assume by contradiction that $\vec{h}_1(\mathcal{P})$ is not divisible by $\mathcal{P}$.
Let $k$ be the maximal nonnegative integer such that $\delta$ is divisible by $\mathcal{P}^k$. Set $\delta=\mathcal{P}^k \delta_1$ and take $s=k+S+1$. Then \eqref{eq:irreduc} writes as
\begin{equation}
\label{eq:irreduc'}
C_s \alpha_j^2 u_j \vec{h}_1(\mathcal{P})^s = \mathcal{P} \mathrm{pol}_{s,j}(u) + \mathcal{P}\frac{\mathrm{pol}(u)}{\delta_1},
\end{equation}
which implies that $\mathcal{P}\mathrm{pol}(u)/\delta_1$ is polynomial in $u$. Taking into account that $\mathcal{P}$ is a positive quadratic form, and thus it is irreducible over $\mathbb R$, we obtain that $\mathrm{pol}(u)/\delta_1$ is polynomial. Therefore the
 right-hand side of \eqref{eq:irreduc'} is divisible by $\mathcal{P}$ and $\vec{h}_1(\mathcal{P})^s$ is also divisible by $\mathcal{P}$. We have a contradiction, which completes the proof.

%
\end{proof}

This proposition has several consequences. The first one is an obvious  corollary of Propositions \ref{prop:from.div} and~\ref{prop:divty}. Let us introduce first some notations. Let $N=N(q_0)$ be the number of distinct eigenvalues of the transition operator $S_q$ for $q$ near $q_0$. We assume that the eigenvalues $\alpha_i^2$, $i=1,\dots,m$, are numbered in such a way that $\alpha_1^2, \ldots, \alpha_N^2$ are the $N$ distinct ones.  For $\ell=1, \dots, N$, we denote by $I_\ell$ the set of indices $i \in \{1,\dots,m\}$ such that $\alpha_i=\alpha_\ell$.

\begin{Coro}
\label{coro:from.div2}
Assume $\vec{h}_1, \vec{h}_2$ are orbitally diffeomorphic near some $\lambda \in \pi^{-1}(q_0)$. Then, for any $\ell, \ell' \in \{1, \dots, N\}$, $\ell \neq \ell'$,
	 \begin{enumerate}[(i)]
		 \item $[X_\ell,X_{\ell'}] \in D^1$;
        \item if $X \in \mathrm{Lie}\{X_i \ : \ i \in I_\ell\}$, then $X \left(\frac{\alpha_{\ell'}^2}{\alpha_{\ell}}\right) = 0$;
		 \item if $X \in \mathrm{Lie}\{X_i \ : \ i \in I_\ell\}$ and $\ell'' \neq \ell$, then $X\left(\frac{\alpha_{\ell'}}{\alpha_{\ell''}}\right) = 0$.
	 \end{enumerate}
\end{Coro}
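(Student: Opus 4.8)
The plan is simply to translate Proposition \ref{prop:from.div} into the language of the distinct eigenvalues. By Proposition \ref{prop:divty}, the hypothesis that $\vec h_1$ and $\vec h_2$ are orbitally diffeomorphic near $\lambda$ implies that both $(g_1,g_2)$ and $(g_2,g_1)$ satisfy the first divisibility condition in a neighbourhood $U$ of $q_0$; hence Proposition \ref{prop:from.div} applies and its four bullet relations hold at every point of $U$. I would also record the elementary consequence of stability that, after shrinking $U$ if necessary, all functions $\alpha_i$ with $i\in I_\ell$ coincide on $U$ with one smooth function, which we keep calling $\alpha_\ell$, and that $\alpha_\ell\neq\alpha_{\ell'}$ on $U$ whenever $\ell\neq\ell'$.

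For (i), I would fix $i\in I_\ell$ and $j\in I_{\ell'}$ with $\ell\neq\ell'$. Since $\alpha_i=\alpha_\ell\neq\alpha_{\ell'}=\alpha_j$ everywhere on $U$, the first bullet of Proposition \ref{prop:from.div} read contrapositively gives $[X_i,X_j](q)\in D(q)$ for every $q\in U$; a vector field whose value lies in $D$ at each point is a section of $D$, so $[X_i,X_j]\in D^1$ on $U$, and in particular $[X_\ell,X_{\ell'}]\in D^1$.

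For (ii) and (iii), the one observation to isolate is that, for a fixed smooth function $f$, the set of vector fields annihilating $f$ is closed under the Lie bracket: if $Yf=Zf=0$ then $[Y,Z]f=Y(Zf)-Z(Yf)=0$. Therefore $\mathrm{Lie}\{X_i : i\in I_\ell\}$ annihilates any function that is annihilated by each $X_i$, $i\in I_\ell$, and it suffices to verify the two identities for $X=X_i$, $i\in I_\ell$. Part (ii) is then exactly the third bullet of Proposition \ref{prop:from.div} applied with $f=\alpha_{\ell'}^2/\alpha_\ell$, whose hypothesis $\alpha_i\neq\alpha_j$ is guaranteed by $\ell\neq\ell'$; part (iii) is the fourth bullet applied with $f=\alpha_{\ell'}/\alpha_{\ell''}$, whose hypotheses $\alpha_i\neq\alpha_j$ and $\alpha_i\neq\alpha_k$ are guaranteed by $\ell\neq\ell'$ and $\ell\neq\ell''$.

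I do not expect any real obstacle: once Propositions \ref{prop:divty} and \ref{prop:from.div} are in hand this is a bookkeeping exercise, the only points needing a sentence being the closure under brackets of the annihilator of a function and the fact that at a stable point the coincidence pattern of the eigenvalues is locally constant.
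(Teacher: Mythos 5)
Your proposal is correct and follows exactly the route the paper intends: the paper offers no written proof beyond declaring the statement ``an obvious corollary of Propositions \ref{prop:from.div} and~\ref{prop:divty}'', and your bookkeeping supplies precisely the missing details. The two points you isolate --- that the annihilator of a fixed function is closed under Lie brackets (and under $C^\infty$-linear combinations, which matters since $\mathrm{Lie}\{X_i : i\in I_\ell\}$ is a module), and that at a stable point the coincidence pattern of the eigenvalues is locally constant so that $\alpha_i\neq\alpha_j$ holds on a whole neighbourhood whenever $i\in I_\ell$, $j\in I_{\ell'}$ with $\ell\neq\ell'$ --- are exactly what is needed, and your reading of the evident typo $\alpha_j\neq\alpha_j$ in the third bullet of Proposition \ref{prop:from.div} as $\alpha_i\neq\alpha_j$ is the correct one.
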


The second consequence results directly from the definition of first-divisibility.

\begin{Lemma}
\label{le:PQ}
If $\vec{h}_1, \vec{h}_2$ are locally orbitally diffeomorphic, then
\begin{equation}
	 \label{Q}
	\vec{h}_1 (\mathcal P)=Q \mathcal P, \ \hbox{ where } \
	 Q=\sum_{i=1}^m \frac{X_i(\alpha_i^2)}{\alpha_i^2} u_i.
\end{equation}
\end{Lemma}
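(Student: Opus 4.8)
The plan is to deduce the formula for $Q$ directly from the first divisibility property, by computing $\vec h_1(\mathcal P)$ in the adapted frame and comparing the coefficients of a few well-chosen monomials in the fiber coordinates $u$.

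First I would invoke Proposition~\ref{prop:divty}: since $\vec h_1$ and $\vec h_2$ are locally orbitally diffeomorphic, the pair $(g_1,g_2)$ satisfies the first divisibility condition near $q_0$, so there is a polynomial function $Q$ on the fibers with $\vec h_1(\mathcal P)=Q\,\mathcal P$. Using the expression \eqref{eq:h1} of $\vec h_1$, the identity $\pi_*\vec h_1=\sum_{i=1}^m u_i X_i$, and the fact that the eigenvalues $\alpha_i^2$ are functions of $q$ only (so that $Y_i u_j=0$ and $\partial_{u_j}\alpha_i^2=0$), one computes
\begin{equation*}
\vec h_1(\mathcal P)=\sum_{i=1}^m\sum_{p=1}^m X_i(\alpha_p^2)\,u_i u_p^2+2\sum_{i=1}^m\sum_{j=1}^m\sum_{k=1}^n c^k_{ij}\,\alpha_j^2\,u_i u_j u_k .
\end{equation*}
This is homogeneous of degree $3$ in $u$, while $\mathcal P$ is homogeneous of degree $2$; hence $Q$ is homogeneous of degree $1$, that is $Q=\sum_{c=1}^n q_c u_c$ for some smooth functions $q_c$ on $M$, and it only remains to identify the $q_c$.

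To this end I would compare, in $\vec h_1(\mathcal P)=Q\,\mathcal P$, the coefficients of the monomials $u_a^3$ and $u_b u_a^2$. Fix $a\in\{1,\dots,m\}$: on the right-hand side the coefficient of $u_a^3$ is $q_a\alpha_a^2$, while on the left-hand side the first double sum contributes $X_a(\alpha_a^2)$ and the structural triple sum contributes $2c^a_{aa}\alpha_a^2=0$ since $[X_a,X_a]=0$; whence $q_a=X_a(\alpha_a^2)/\alpha_a^2$. Next fix $b\in\{m+1,\dots,n\}$ and $a\in\{1,\dots,m\}$: since $\mathcal P$ involves only $u_1,\dots,u_m$, the coefficient of $u_b u_a^2$ in $Q\,\mathcal P$ is exactly $q_b\alpha_a^2$; on the left-hand side the first sum produces no monomial containing $u_b$, and in the triple sum the index $b$ can appear only as $k$, which forces $k=b$ and $i=j=a$, with coefficient $2c^b_{aa}\alpha_a^2=0$. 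Hence $q_b=0$ for every $b>m$, and therefore $Q=\sum_{i=1}^m\frac{X_i(\alpha_i^2)}{\alpha_i^2}u_i$, which is \eqref{Q}. (The coefficients of the remaining monomials $u_b u_a^2$ with $b\le m$, $b\ne a$, then match automatically; this amounts to the second identity of Proposition~\ref{prop:from.div} but is not needed here.)

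The argument is elementary once the existence and linearity of $Q$ are in hand; the only point requiring a little care is the final bookkeeping, precisely the observation that the structure‑coefficient contributions to the monomials $u_a^3$ and $u_b u_a^2$ ($b>m$) vanish because $c^a_{aa}=c^b_{aa}=0$. I do not foresee any genuine obstacle.
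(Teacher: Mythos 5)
Your proposal is correct and follows essentially the same route as the paper: invoke Proposition~\ref{prop:divty} to get divisibility of the cubic $\vec h_1(\mathcal P)$ by the irreducible quadratic $\mathcal P$, conclude that the quotient $Q$ is linear in $u$, compute $\vec h_1(\mathcal P)$ in the adapted frame, and identify the coefficients of $u_a^3$ and $u_b u_a^2$ ($b>m$) to pin down $Q$. Your explicit remark that $c^a_{aa}=c^b_{aa}=0$ by antisymmetry is a minor bookkeeping detail the paper leaves implicit, but the argument is the same.
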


\begin{proof}
From Proposition~\ref{prop:divty}, the third degree polynomial $\vec{h}_1 (\mathcal{P})$ is divisible by the quadratic polynomial $\mathcal{P}$. Hence there exists a linear function $Q=\sum_{j=1}^n p_j u_j$ such that
$$
\vec{h}_1 (\mathcal{P})=Q \mathcal{P}= \bigl(\sum_{j=1}^n p_j u_j\bigr)\bigl(\sum_{i=1}^m\alpha_{i}^2 u_i^2\bigr).
$$
On the other hand, using the expression \eqref{eq:h1} of $\vec{h}_1$, we get
$$
\vec{h}_1(\mathcal{P})= \sum_{i,j=1}^{m} X_i(\alpha_j^2) u_i u_j^2 + \sum_{i,j=1}^{m} \sum_{k=1}^{n} 2 c^{k}_{ij} \alpha_j^2 u_i u_j u_k.
$$
Identifying the coefficients of the monomials $u_i^3$ and $u_i^2u_j$, $1\leq i \leq m$, $m<j \leq n$, in the two expressions above, we obtain respectively
$$
p_i= \frac{X_i(\alpha_i^2)}{\alpha_i^2} \hbox{ for } 1\leq i \leq m, \quad p_i=0 \hbox{ for } m< i \leq n.
$$
\end{proof}

\subsection{Existence of first integrals}

An important consequence of the first-divisibility property is the existence of quadratic first integrals for the Hamiltonian flow.
Let $g_1,g_2$ be two sub-Riemannian metrics on $(M,D)$, and $q_0$ be a stable point w.r.t.\ $g_1,g_2$. Proceeding as above, we assume that the eigenvalues $\alpha_i^2$, $i=1,\dots,m$, are numbered in such a way that $\alpha_1^2, \ldots, \alpha_N^2$ are the $N$ distinct ones. We introduce also  a frame $\{X_1,\dots,X_n\}$ adapted to $(g_1,g_2)$, the associated coordinates $(u_1,\dots,u_n)$ on the fibers of $T^*M$, and the polynomial
$$
\mathcal{P}=\sum_{i=1}^m\alpha_{i}^2 u_i^2.
$$

\begin{Prop}
	 \label{1divntegral}
If $\vec{h}_1$ and $\vec{h}_2$ are orbitally diffeomorphic near some $\lambda \in \pi^{-1}(q_0)$, then the function
\begin{equation*}
\label{integral}
F= \Bigl(\prod_{\ell=1}^N\alpha_\ell^2\Bigr)^{-\frac{2}{N+1}} \mathcal{P}
\end{equation*}
is a  first integral of the normal extremal flow of $g_1$, i.e.
\begin{equation*}
	 \label{integraldef}
	\vec h_1(F)=0.
\end{equation*}
\end{Prop}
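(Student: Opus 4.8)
The plan is to factor $F = G\,\mathcal{P}$, where $G = \bigl(\prod_{\ell=1}^N\alpha_\ell^2\bigr)^{-2/(N+1)}$ is a smooth positive function pulled back from $M$ (it is well defined and smooth near $q_0$ precisely because $q_0$ is stable, so the $N$ distinct eigenvalues $\alpha_1^2,\dots,\alpha_N^2$ depend smoothly on the base point). By the Leibniz rule, $\vec{h}_1(F) = \vec{h}_1(G)\,\mathcal{P} + G\,\vec{h}_1(\mathcal{P})$, and Lemma~\ref{le:PQ} gives $\vec{h}_1(\mathcal{P}) = Q\mathcal{P}$ with $Q = \sum_{i=1}^m \frac{X_i(\alpha_i^2)}{\alpha_i^2}u_i$. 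Since $G>0$ we may write $\vec{h}_1(G) = G\,\vec{h}_1(\log G)$, so $\vec{h}_1(F) = G\bigl(\vec{h}_1(\log G) + Q\bigr)\mathcal{P}$, and the whole statement reduces to the single scalar identity $\vec{h}_1(\log G) = -Q$.

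To prove this identity, I first use that for a function $f$ on $M$ one has $\vec{h}_1(f\circ\pi) = \sum_{i=1}^m u_i\,X_i(f)$, since by~\eqref{eq:h1} the field $\vec{h}_1$ equals $\sum_{i=1}^m u_i Y_i$ modulo vertical terms and $Y_i$ projects to $X_i$. Applying this to $\log G = -\frac{2}{N+1}\sum_{\ell=1}^N\log\alpha_\ell^2$ yields
\[
\vec{h}_1(\log G) = -\frac{2}{N+1}\sum_{i=1}^m u_i\sum_{\ell=1}^N\frac{X_i(\alpha_\ell^2)}{\alpha_\ell^2},
\]
so it suffices to show that $\sum_{\ell=1}^N\frac{X_i(\alpha_\ell^2)}{\alpha_\ell^2} = \frac{N+1}{2}\,\frac{X_i(\alpha_i^2)}{\alpha_i^2}$ for every $i\in\{1,\dots,m\}$.

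This is where the first-divisibility consequences enter. Fix $i$ and let $\ell_0$ be the index with $i\in I_{\ell_0}$, so $\alpha_i^2 = \alpha_{\ell_0}^2$. For $\ell\neq\ell_0$ the field $X_i$ lies in $\mathrm{Lie}\{X_j : j\in I_{\ell_0}\}$, hence Corollary~\ref{coro:from.div2}(ii) gives $X_i\bigl(\alpha_\ell^2/\alpha_{\ell_0}\bigr)=0$; expanding by the Leibniz rule and using $X_i(\alpha_{\ell_0}) = \frac{1}{2\alpha_{\ell_0}}X_i(\alpha_{\ell_0}^2)$ turns this into $\frac{X_i(\alpha_\ell^2)}{\alpha_\ell^2} = \frac12\,\frac{X_i(\alpha_{\ell_0}^2)}{\alpha_{\ell_0}^2} = \frac12\,\frac{X_i(\alpha_i^2)}{\alpha_i^2}$. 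Adding the $\ell=\ell_0$ term (which equals $\frac{X_i(\alpha_i^2)}{\alpha_i^2}$) to the $N-1$ terms with $\ell\neq\ell_0$ gives $\bigl(1+\tfrac{N-1}{2}\bigr)\frac{X_i(\alpha_i^2)}{\alpha_i^2} = \frac{N+1}{2}\frac{X_i(\alpha_i^2)}{\alpha_i^2}$, as wanted. Substituting back, $\vec{h}_1(\log G) = -\frac{2}{N+1}\cdot\frac{N+1}{2}\,Q = -Q$, and therefore $\vec{h}_1(F)=0$.

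The computation is essentially bookkeeping; the only genuinely delicate point is recognizing the role of the exponent $-\frac{2}{N+1}$, which is tuned exactly so that the cross-class relations from first divisibility (Corollary~\ref{coro:from.div2}(ii)) collapse the sum $\sum_\ell \frac{X_i(\alpha_\ell^2)}{\alpha_\ell^2}$ into a multiple of its diagonal term and thereby reproduce $Q$. A secondary technical point, easily dispatched, is the smoothness of $G$ near $q_0$, which follows from the stability of $q_0$.
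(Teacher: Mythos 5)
Your proof is correct and follows essentially the same route as the paper: both reduce the claim via Lemma~\ref{le:PQ} to showing $\vec{h}_1\bigl(\prod_\ell \alpha_\ell^{-4/(N+1)}\bigr) = -Q\cdot\prod_\ell \alpha_\ell^{-4/(N+1)}$, compute the derivative of the prefactor using $\vec{h}_1(f\circ\pi)=\sum_i u_i X_i(f)$, and collapse the sum with the cross-class relation $X_i(\alpha_\ell^2)=\frac{\alpha_\ell^2 X_i(\alpha_i^2)}{2\alpha_i^2}$ from Corollary~\ref{coro:from.div2}(ii). The only cosmetic difference is your use of $\log G$ rather than differentiating the product directly.
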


Note that, in the Riemannian case (i.e.\ $D=TM$), the existence of this quadratic first integral was shown by Levi-Civita in \cite{Levi-Civita1896} (see also \cite{Matveev-Topalov2003}, where this integral is attributed to Painlev\'{e}).

\begin{proof}
Set $f=\Bigl(\prod_{\ell=1}^N\alpha_\ell^2\Bigr)^{-\frac{2}{N+1}}$. Using Lemma~\ref{le:PQ} we get
	 \begin{equation}
	 \label{Q2}
	\vec h_1(F)= \vec h_1(f \mathcal{P})= \bigl(\vec h_1(f)+f Q \bigr) \mathcal{P}.
	 \end{equation}
Further, using the expression \eqref{eq:h1} of $\vec h_1$, we have
\begin{align}
\vec h_1(f) &=-\frac{2}{N+1}\sum_{i=1}^m\left(\Bigl(\prod_{\ell=1}^N\alpha_\ell^2\Bigr)^{-\frac{2}{N+1}-1}u_i\sum _{\ell=1}^N
	 \bigl(\prod_{k\neq \ell}\alpha_k^2\bigr) X_i(\alpha_\ell^2)\right) \nonumber \\
	& = -\frac{2}{N+1}f\sum_{i=1}^m u_i\sum _{\ell=1}^N
	\cfrac {X_i(\alpha_\ell^2)}{\alpha_\ell^2}.\label{Q3}
\end{align}

Notice now that Corollary~\ref{coro:from.div2}, \emph{(ii)}, implies that,
\begin{equation*}
\label{Q4}
\text{if} \ \alpha_j \neq \alpha_i, \quad \text{ then} \quad X_i(\alpha_j^2)=\cfrac{\alpha_j^2 X_i(\alpha_i^2)}{2\alpha_i^2}.
\end{equation*}
Plugging this into \eqref{Q3}, we get
\begin{equation*}
\vec{h}_1(f)
= -	 \frac{2}{N+1}\Bigl(\cfrac{N-1}{2}+1\Bigr)f\sum _{i=1}^m\frac{X_i(\alpha_{i}^2)}{\alpha_{i}^2} u_i=
	-fQ.
\end{equation*}
By \eqref{Q2} we obtain $\vec h_1(F)=0$, which completes the proof.
\end{proof}

The normal extremal flow of $g_1$ already admits $h_1$ as a quadratic first integral, and $F$ is not proportional to $h_1$ except when $N=1$, which corresponds to the case where $g_1$ and $g_2$ are conformal to each other. This proves Theorem \ref{th:integral}. The existence of several quadratic first integrals appears to be a strong condition on the metric.

\begin{Prop}
\label{le:quadratic}
Let $(M,D)$ be fixed. The normal extremal flow of a generic sub-Riemannian metric on $(M,D)$ admit no other non-trivial quadratic first integral than its Hamiltonian.
\end{Prop}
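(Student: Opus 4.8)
The plan is to show that the existence of a non-trivial quadratic first integral is a closed condition of infinite codimension in the space of sub-Riemannian metrics on $(M,D)$, so that its complement is residual (hence generic in the Baire sense). A quadratic first integral is a function on $T^*M$ that is quadratic on each fiber, i.e.\ of the form $F(q,p) = \sum a^{ij}(q) u_i u_j$ in adapted coordinates, with $a^{ij}$ a symmetric tensor; it must satisfy $\vec h_g(F) = 0$ identically on $T^*M$. The condition $\vec h_g(F)=0$ is, for each fixed $q$, a finite system of linear PDEs on the jets of the metric $g$ and the tensor $F$ at $q$; the key point is that it is \emph{overdetermined}. Following \cite{Kruglikov-Matveev2015}, the strategy is to localize: it suffices to prove that for a generic $g$ there is no non-trivial quadratic integral in any neighborhood of any point, and for this one reduces to a finite-order jet computation at a single point.

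The concrete steps are as follows. First, I would recall the prolongation-compatibility analysis of the linear PDE system $\vec h_g(F)=0$: differentiating it repeatedly in the horizontal and vertical directions yields, at a point $q_0$, an overdetermined linear system on the components of $F$ whose coefficients are polynomial in a sufficiently high jet $j^N_{q_0} g$ of the metric. Second, I would exhibit an explicit metric $g_0$ (for instance one whose adapted structure coefficients are chosen generically, or simply a left-invariant model perturbed suitably) for which this system has only the trivial solution $F = c\, h_{g_0}$; here the reduction to Jacobi curves / ample geodesics from Subsection~\ref{se:ample} and the non-degeneracy of the fundamental algebraic system (Proposition~\ref{th:nonzer.min}) can be invoked to guarantee that a quadratic integral is rigidly determined by finitely many jets, so only finitely many compatibility conditions matter. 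Third, since having a non-trivial quadratic integral is equivalent to the vanishing of a finite set of polynomial "compatibility" expressions in $j^N_{q_0} g$ (together with the solvability of a linear system), the set of $N$-jets of metrics admitting such an integral is a proper algebraic subset $\Sigma_{q_0} \subset J^N_{q_0}$, because it is proper (it misses $g_0$) and algebraic. Fourth, I would pass from jets to metrics: the set of $g$ whose $N$-jet at some point lies in $\Sigma$ is closed and nowhere dense in the Whitney $C^\infty$ (or $C^N$) topology, by a standard transversality/Thom-jet argument — one covers $M$ by countably many compact pieces and takes the countable intersection of the complements, which is residual by Baire.

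The main obstacle I expect is making the reduction to a \emph{finite} jet order uniform and rigorous in the genuinely sub-Riemannian setting: in contrast to \cite{Kruglikov-Matveev2015}, where the Riemannian geodesic flow has a clean second-order structure, here the order $N$ at which a quadratic integral becomes rigidly determined is governed by the flag of the Jacobi curve (the ampleness index), which a priori varies with the metric and the point. The way around this is to work on the open dense set of ample covectors (Theorem~\ref{th:Agr}) and use that, by Lemma~\ref{Theorem.rank}, the rank of $A_s(u)$ stabilizes to $n-m$ after finitely many layers; this bounds the relevant jet order locally and lets one choose $N$ uniformly on compact sets. A secondary, more bookkeeping-type difficulty is verifying that the "bad" set $\Sigma_{q_0}$ is genuinely a proper subvariety — i.e.\ actually producing or citing one metric with no non-trivial quadratic integral — but this can be done by an explicit generic choice of structure coefficients, exactly as in the Riemannian model case, so I regard it as routine once the jet-finiteness is in place.
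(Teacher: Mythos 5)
Your proposal takes a jet-transversality route (prolong the linear system $\vec h_g(F)=0$, show the bad set of $N$-jets is a proper algebraic subvariety, conclude by Thom/Baire), which is genuinely different from what the paper does, and it has a gap at exactly the point you label ``routine''. The whole argument hinges on producing, for an \emph{arbitrary} germ of bracket generating distribution and some finite order $k$, one metric jet at $q_0$ for which the order-$k$ truncation of the prolonged system admits only solutions proportional to $h_g$; without this, the algebraic sets $\Sigma_{q_0}$ could be all of jet space and the Baire argument is vacuous. This is not a formality: it combines (i) a finite-type statement for the sub-Riemannian Killing-type equation, i.e.\ that a quadratic integral is determined by a finite jet, and (ii) the actual nonexistence of extra solutions for some jet. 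For (i) you invoke Proposition~\ref{th:nonzer.min} and Lemma~\ref{Theorem.rank}, but those concern the injectivity of the matrix $A$ acting on the components $\widetilde\Phi$ of an \emph{orbital diffeomorphism}; there is no statement in the paper that transfers this to the solution space of $\vec h_g(F)=0$, and the unknowns, the number of equations per layer, and the prolongation structure are all different. For (ii), ``an explicit generic choice of structure coefficients'' is not available to you, because the structure coefficients of an adapted frame are constrained by the fixed distribution $D$ (e.g.\ for a Carnot-group-like germ with a product structure one has to rule out the extra integrals coming from the factors by a genuine computation). So both halves of the crux are asserted rather than proved.

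For comparison, the paper avoids the prolongation question entirely by adapting the Kruglikov--Matveev perturbation-along-geodesics scheme: one fixes $k$ sets of $N=n(n+1)/2$ points, uses that a quadratic integral is constant along normal extremals joining them, and kills the resulting finite family of linear constraints by localized perturbations of the metric along the geodesic segments. The only genuinely sub-Riemannian input is Lemma~\ref{le:subm}: at an \emph{ample} covector the map $\widetilde g\mapsto e^{\vec h_{\widetilde g}}(\lambda_0)$ is a submersion, proved by showing that $\mathrm{span}\{e_*^{-s\vec h_g}J^{(1)}_{\lambda(s)}:s\in[0,1]\}$ is the whole tangent space precisely because ampleness forbids a covector annihilating all extensions $J^{(k)}_{\lambda_0}$; together with the local openness of the exponential map at ample covectors and $2$-decisive families this yields the configurations \textbf{(H.1)}--\textbf{(H.3)} and the (stronger, $C^2$-open-dense) conclusion. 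If you want to salvage your route, the missing ingredient is a proof that the quadratic-integral equation is of finite type with a computable order on the set of ample covectors, plus an existence proof of a rigid jet for every distribution germ; as it stands the proposal does not establish the proposition.
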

One can find a rigorous proof of this result in \cite{Kruglikov-Matveev2015} for the case $D=TM$ (Riemannian case), and we show in Appendix~\ref{se:proof_quadratic} that the same arguments can be applied to any bracket-generating distribution on $M$.

Theorem~\ref{th:generic_metric} is a direct consequence of this proposition, Proposition~\ref{1divntegral}, and Corollary~\ref{le:conf_affine} below.

\subsection{Consequences on affine equivalence}
\begin{Prop}
\label{le:constant_alphai}
If two sub-Riemannian metrics $g_1, g_2$ on $(M, D)$ are affinely equivalent on an open connected subset $U \subset M$, then all the eigenvalues $\alpha_1^2, \dots,\alpha_m^2$ of the transition operator are constant.
\end{Prop}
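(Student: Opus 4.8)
The plan is to combine the affine-equivalence refinement of Proposition~\ref{th:orb.to.proj} with the first-divisibility identities of Proposition~\ref{prop:from.div}. Since the set of stable points with respect to $g_1,g_2$ is open and dense in $M$, it will be enough to prove constancy of the eigenvalues near each stable point $q_0\in U$ and then reassemble. So I would fix such a $q_0$, choose a frame $\{X_1,\dots,X_n\}$ adapted to $(g_1,g_2)$ with induced fibre coordinates $u=(u_1,\dots,u_n)$, pick (via the remark following Theorem~\ref{th:Agr}) an ample covector $\lambda_1\in H_1\cap\pi^{-1}(q_0)$, and apply Proposition~\ref{th:orb.to.proj}: affine equivalence produces an orbital diffeomorphism near $\lambda_1$ whose function $\aa$ in~\eqref{orbeq} satisfies $\vec{h}_1(\aa)=0$.

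The first step is to convert $\vec{h}_1(\aa)=0$ into an identity for $\mathcal{P}=\alpha_1^2u_1^2+\cdots+\alpha_m^2u_m^2$. By~\eqref{eq:aa} one has $\mathcal{P}=2h_1\aa^2$, so differentiating along $\vec{h}_1$ and using $\vec{h}_1(h_1)=\vec{h}_1(\aa)=0$ gives $\vec{h}_1(\mathcal{P})=0$ on $\{h_1\neq 0\}$; since $\vec{h}_1(\mathcal{P})$ is a polynomial on each fibre, it vanishes identically over a neighbourhood of $q_0$. On the other hand $\vec{h}_1,\vec{h}_2$ are locally orbitally diffeomorphic, so Lemma~\ref{le:PQ} gives $\vec{h}_1(\mathcal{P})=Q\mathcal{P}$ with $Q=\sum_{i=1}^m\frac{X_i(\alpha_i^2)}{\alpha_i^2}u_i$. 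As $\mathcal{P}\not\equiv 0$, this forces $Q\equiv 0$, i.e.\ $X_i(\alpha_i^2)=0$ for every $i$, and hence $X_i(\alpha_i)=\frac{1}{2\alpha_i}X_i(\alpha_i^2)=0$.

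The second step, which I expect to be the crux, is to upgrade this to $X_i(\alpha_j^2)=0$ for all $i,j\in\{1,\dots,m\}$. Here Proposition~\ref{prop:divty} guarantees that $(g_1,g_2)$ \emph{and} $(g_2,g_1)$ satisfy the first divisibility condition near $q_0$, so Proposition~\ref{prop:from.div} applies. If $\alpha_i\neq\alpha_j$, its third identity $X_i(\alpha_j^2/\alpha_i)=0$ expands, using $X_i(\alpha_i)=0$, to $X_i(\alpha_j^2)/\alpha_i=0$, whence $X_i(\alpha_j^2)=0$. If $\alpha_i=\alpha_j$, its second identity $X_i(\alpha_j^2/\alpha_i^2)=2c^j_{ij}(1-\alpha_j^2/\alpha_i^2)$ has zero right-hand side, and its left-hand side, using $X_i(\alpha_i^2)=0$, equals $X_i(\alpha_j^2)/\alpha_i^2$, so again $X_i(\alpha_j^2)=0$. (One can also obtain the relation $X_i(\alpha_j^2)=-2c^j_{ij}(\alpha_j^2-\alpha_i^2)$ directly by matching the coefficient of $u_i^2u_j$ in the fibrewise polynomial identity $\vec{h}_1(\mathcal{P})=0$ computed from~\eqref{eq:h1}, but the third divisibility identity is still needed to kill $c^j_{ij}$ when $\alpha_i\neq\alpha_j$.)

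Finally I would propagate the vanishing through brackets. Near $q_0$ any section of $D$ is a combination of $X_1,\dots,X_m$, hence annihilates each $\alpha_j^2$; and if $Y,Z$ annihilate $\alpha_j^2$ then $[Y,Z](\alpha_j^2)=Y(Z(\alpha_j^2))-Z(Y(\alpha_j^2))=0$, so by induction on bracket length every element of $\mathrm{Lie}(D)$ annihilates $\alpha_j^2$. Since $D$ is bracket generating, $\mathrm{Lie}(D)(q)=T_qM$, so $d\alpha_j^2=0$ near $q_0$: each distinct eigenvalue of the transition operator is locally constant on the open dense set of stable points of $U$. Consequently every power sum $\mathrm{tr}(S_q^k)$, which is smooth on all of $U$, has differential vanishing on a dense subset and hence everywhere, so $\mathrm{tr}(S_q^k)$ is constant on the connected set $U$ for all $k$; by Newton's identities the characteristic polynomial of $S_q$, and therefore its spectrum, is constant on $U$. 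The only genuinely non-routine point is the second step: $\vec{h}_1(\aa)=0$ by itself yields only the diagonal relations $X_i(\alpha_i^2)=0$, and it is the full first-divisibility machinery that supplies the off-diagonal ones, after which bracket-generation closes the argument with no further computation.
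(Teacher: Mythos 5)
Your proof is correct and follows essentially the same route as the paper: Proposition~\ref{th:orb.to.proj} gives $\vec{h}_1(\aa)=0$, hence $\vec{h}_1(\mathcal{P})=0$, Lemma~\ref{le:PQ} forces $Q=0$ and thus $X_i(\alpha_i^2)=0$, the first-divisibility identities (the paper quotes Corollary~\ref{coro:from.div2}~\emph{(ii)} where you invoke Proposition~\ref{prop:from.div} directly) supply the off-diagonal relations, and bracket generation plus density of stable points concludes. Your globalization via the power sums $\mathrm{tr}(S_q^k)$ is a slightly more careful rendering of the paper's final continuity argument, but the substance is identical.
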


\begin{proof}
Let $g_1,g_2$ be two affinely equivalent metrics on $U$, and let $q_0 \in U$ be a stable point with respect to $g_1,g_2$. From Proposition~\ref{th:orb.to.proj}, $\vec{h}_1$ and $\vec{h}_2$ are locally orbitally diffeomorphic and $\vec{h}_1(\aa)=0$. Using equality \eqref{eq:h1P} we get that $\vec{h}_1(\mathcal{P})=0$.

From Lemma~\ref{le:PQ}, $\vec{h}_1 (\mathcal P)=Q \mathcal P$. Hence $Q=0$,  which implies that $X_i(\alpha_i^2)=0$ for $i=1,\dots,m$. Using Corollary~\ref{coro:from.div2} \emph{(ii)}, we obtain $X_i(\alpha_j^2)$ for any $i,j \in \{1,\dots,m\}$, and since the vector fields $X_1,\dots,X_m$ are bracket-generating, we finally get that $\alpha_1^2,\dots,\alpha_m^2$ are constant near $q_0$.

Thus the eigenvalues $\alpha_1^2,\dots,\alpha_m^2$ are continuous functions on $U$ which are locally constant near stable points. Since the set of stable points is dense in $U$, we conclude that all eigenvalues are constant.
\end{proof}

\begin{rk}
\label{re:affine}
It results from \eqref{Q} that, if all $\alpha_i$'s are constants, then $\vec{h}_1 (\mathcal P)=0$, which in turn implies $\vec{h}_1 (\aa)=0$ by \eqref{eq:h1P}. Combining this remark with Propositions \ref{th:orb.to.proj} and~\ref{th:proj.to.orb}, we obtain the following result: if two metrics are projectively equivalent and the transition operator has constant eigenvalues, then the metrics are affinely equivalent.
\end{rk}

\begin{Coro}
\label{le:conf_affine}
If a sub-Riemannian metric is conformally projectively rigid, then it is affinely rigid.
\end{Coro}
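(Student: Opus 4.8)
The plan is to derive this as a short corollary of Proposition~\ref{le:constant_alphai} together with the definitions. Suppose $g_2$ is affinely equivalent to $g=g_1$ on a connected open set $U$ (if affine equivalence is only assumed at a point $q_0$, replace $U$ by a connected open neighborhood of $q_0$ on which everything below takes place). Since affine equivalence implies projective equivalence, $g_1$ and $g_2$ are projectively equivalent on $U$; and because $g_1=g$ is conformally projectively rigid, $g_2$ must be conformal to $g_1$, i.e.\ $g_2=\alpha^2 g_1$ for some smooth nonvanishing function $\alpha$ on $U$.

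Next I would read off the transition operator of the pair $(g_1,g_2)$: from $g_1(q)(S_q v_1,v_2)=g_2(q)(v_1,v_2)=\alpha^2(q)\,g_1(q)(v_1,v_2)$ one gets $S_q=\alpha^2(q)\,\mathrm{Id}_{D_q}$, so all eigenvalues of $S_q$ coincide and equal $\alpha^2(q)$ (in particular the spectral size $N$ is identically $1$). Applying Proposition~\ref{le:constant_alphai} to the affinely equivalent pair $g_1,g_2$ on the connected set $U$ then shows that every eigenvalue of the transition operator is constant; hence $\alpha^2$ is a positive constant. Therefore $g_2=\alpha^2 g_1$ with $\alpha^2$ constant, i.e.\ $g_2$ is constantly proportional to $g_1$ and thus only trivially affinely equivalent to $g$. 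This means $g$ admits no nontrivially affinely equivalent metric, which is precisely affine rigidity.

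I do not expect a genuine obstacle here: the whole argument is a short deduction from earlier results. The only point demanding a little attention is the bookkeeping of the \emph{scope} of the two rigidity notions (pointwise versus on a connected open set), which is handled by passing to a connected neighborhood of the relevant base point and relying on the density-of-stable-points argument already contained in the proof of Proposition~\ref{le:constant_alphai}.
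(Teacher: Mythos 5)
Your argument is correct and is essentially identical to the paper's own proof: affine equivalence implies projective equivalence, conformal projective rigidity forces $g_2=\alpha^2 g_1$, so the transition operator has the single eigenvalue $\alpha^2$, which is constant by Proposition~\ref{le:constant_alphai}, yielding trivial equivalence. The extra care you take about connectedness and scope is fine but not a departure from the paper's route.
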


\begin{proof}
Let $g_1$ be a conformally projectively rigid sub-Riemannian metric. If a metric $g_2$ is affinely equivalent to $g_1$, then it is also projectively equivalent to $g_1$, and by hypothesis $g_2 = \alpha^2 g_1$. Hence $\alpha^2$ is the unique eigenvalue of the transition operator and is constant by Proposition~\ref{le:constant_alphai}, which implies that $g_2$ is trivially equivalent to $g_1$.
\end{proof}



\section{Levi-Civita pairs}
\subsection{Definition and the main open question}
\label{se:LCpairs}

Let us introduce a special case of non-trivially projectively and affinely equivalent metrics.  First we define a distribution which admits a product structure.

Fix positive integers $N$, $n_1,\ldots,n_N$, and set $n=n_1+\cdots+n_N$. We denote the canonical coordinates on $\R^n= \R^{n_1}\times \cdots \times \R^{n_N}$ by $x=(\bar x_1,\ldots,\bar x_N)$, where $\bar x_\ell=(x_\ell^1,\ldots, x_\ell^{n_\ell})$. For any $\ell \in \{1,\dots,N\}$, let $D_\ell$ be a Lie bracket generating distribution on $\mathbb R^{n_\ell}$. We define the \emph{product distribution} $D=D_1 \times \cdots \times D_N$ on $\mathbb R^n$ by
\begin{equation}
\label{eq:product_distrib}
D(x)=\left\{ v \in T_{x} \mathbb R^n: (\pi_\ell)_*(v)\in D_\ell \bigl(\pi_\ell(x)\bigr), \ \ell=1,\ldots, N \right\},
\end{equation}
where  $\pi_\ell: \R^n \rightarrow \R^{n_\ell}$, $\ell=1,\ldots, N$, are the canonical projection.

\begin{Def}
\label{def:product}
We say that a distribution $D$ on a $n$-dimensional manifold $M$ admits \emph{a nontrivial product structure} at $q \in M$ if there is a local coordinate system in a neighborhood of $q$ in which $D$ takes the form of a product distribution with $N\geq 2$ factors.
\end{Def}
Note that the case $N=1$ is trivial since any distribution can be written in local coordinates as a product distribution with one factor.

\begin{rk}
Regarding the notion of distribution admitting a product structure one can have in mind the following different notion, which is weaker than the one we use: a distribution $D$ admits a \emph{weak product structure} if there are two sub-distributions $D_1$ and $D_2$ of $D$ satisfying the following two properties:
\begin{enumerate}
\item	
$D(q)=D_1(q)\oplus D_2(q)$ for any $q\in M$,
\item  there are local frames $(X_1,\ldots, X_{m_1})$ of $D_1$ and $(Y_1,\ldots, Y_{m_2})$ of $D_2$ such that $[X_i, Y_j]=0$,
\end{enumerate}
and in this case $D$ is said to be a weak product of $D_1$ and $D_2$.
We stress that in our Definition~\ref{def:product} we require much more: if $D=D_1\times D_2$ in our sense, then $D$  is clearly a weak product of $D_1$ and $D_2$. Moreover if, for some $j>1$, $D^j$, $D_1^j$, and $D_2^j$  are still distributions, then $D^j$ must be a weak product of $D_1^j$ and $D_2^j$. For example, a contact distribution $D$ does not admit a product structure in our sense, but it does admit a weak product structure if $\mathrm{rank} \,D\geq 4$.
\end{rk}

Given a product distribution $D=D_1 \times \cdots \times D_N$ on $\mathbb R^n$, we choose for every $\ell \in \{1,\dots,N\}$ a sub-Riemannian metric  $\bar g_\ell$ on  $(\mathbb R^{n_\ell},D_\ell)$ and a function $\beta_\ell$ depending only on the variables $\bar x_\ell$ such that $\beta_\ell$ is constant if $n_\ell>1$ and  $\beta_\ell (0)\neq \beta_{\ell'}(0)$ for ${\ell}\neq \ell'$. We define two sub-Riemannian metrics $g_1,g_2$ on $(\R^n,D)$ by
\begin{equation}
\label{met1}
\left\{
\begin{array}{l}
\displaystyle  g_1 (x)(\dot x, \dot x) =\sum_{\ell=1}^N \gamma_\ell (x) \: \bar g_\ell(\bar{x}_\ell)(\dot{\bar{x}}_\ell, \dot{\bar{x}}_\ell), \\
\displaystyle  g_2 (x)(\dot x, \dot x) =\sum_{\ell=1}^N \alpha^2_\ell(x) \gamma_\ell(x) \: \bar g_\ell(\bar{x}_\ell)(\dot{\bar{x}}_\ell, \dot{\bar{x}}_\ell),
\end{array}
\right.
\end{equation}
where
\begin{equation}
\label{met2}
\alpha^2_\ell(x)=\beta_\ell(\bar x_\ell)\prod_{{\ell'}=1}^N\beta_{\ell'}(\bar x_{\ell'}), \qquad   \gamma_\ell(x)=\prod_{{\ell'}\neq
	\ell}\Bigl|\frac{1}{\beta_{\ell'}(\bar x_{\ell'})}-\frac{1}{\beta_\ell(\bar x_\ell)}\Bigr|.
\end{equation}

\begin{Def}
Let $D$ be a distribution on an $n$-dimensional manifold $M$. We say that a pair $(g_1,g_2)$ of sub-Riemannian metrics on $(M,D)$ form a \emph{(generalized) Levi-Civita pair} at a point $q\in M$, if there is a local coordinate system in a neighborhood of $q$, in which $D$ takes the form of a product distribution and the metrics $g_1$ and $g_2$ have the form \eqref{met1}. We say that such a pair has \emph{constant coefficients} if the coordinate system can be chosen so that the functions $\beta_\ell$, $\ell=1,\ldots, N$, are constant  (and so all functions $\alpha^2_\ell$ and $\gamma_\ell$ are constant too).
\end{Def}

This definition is inspired by the classification in the Riemannian case appearing in \cite{Levi-Civita1896}. Note however that, in the Riemannian case, the distribution $D=TM$ takes the form of a product in any system of coordinates, so that Levi-Civita pairs always exist locally.

\begin{rk}
A Levi-Civita pair with $N=1$ is a pair of conformal metrics, $g_2 = \alpha_1^2 g_1$. If moreover $n>1$, two such metrics are actually constantly proportional. Thus, when $n>1$,  the metrics of a Levi-Civita pair are constantly proportional if and only if $N=1$.
\end{rk}

\begin{Prop}
\label{le:LCpairs}
The two metrics of a Levi-Civita pair are projectively equivalent. They are affinely equivalent if and only if the pair has constant coefficients.
\end{Prop}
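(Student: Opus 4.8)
The plan is to derive both assertions from Proposition~\ref{th:proj.to.orb} by producing an explicit orbital diffeomorphism between the normal extremal flows of $g_1$ and $g_2$. Throughout we work in the coordinates of \eqref{eq:product_distrib}--\eqref{met2}, so that $D=D_1\times\cdots\times D_N$, the metrics have the product form \eqref{met1}, and all the $\gamma_\ell$ are positive on the chart (using $\beta_\ell(0)\neq\beta_{\ell'}(0)$ for $\ell\neq\ell'$).

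First I would put the two Hamiltonians in product form. Let $\bar h_\ell$ be the sub-Riemannian Hamiltonian of $\bar g_\ell$ on $T^*\R^{n_\ell}$, viewed as a function on $T^*\R^n=T^*\R^{n_1}\times\cdots\times T^*\R^{n_N}$. Since the decomposition $D=D_1\times\cdots\times D_N$ is orthogonal for both metrics, since the dual norm of a direct sum of quadratic forms is the direct sum of the dual norms, and since conformally rescaling a metric by $\gamma$ divides its Hamiltonian by $\gamma$, one gets $h_1=\sum_{\ell=1}^N\bar h_\ell/\gamma_\ell$ and $h_2=\sum_{\ell=1}^N\bar h_\ell/(\alpha_\ell^2\gamma_\ell)$. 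Next I would choose a frame $\{X_1,\dots,X_n\}$ adapted to the pair $(g_1,g_2)$ that respects the product: on the $\ell$-th factor take $\gamma_\ell^{-1/2}$ times a $\bar g_\ell$-orthonormal adapted frame of $D_\ell$, and complete it by vector fields lying in $\mathrm{Lie}(D_\ell)$; such a frame diagonalizes the transition operator, whose eigenvalue on $D_\ell$ is $\alpha_\ell^2$. Each fibre coordinate $u_i$ then belongs to a well-defined factor $\ell(i)$, and setting $\alpha_i=\alpha_{\ell(i)}$ (also for $i>m$) we have $h_1=\tfrac12\sum_i u_i^2$, $h_2=\tfrac12\sum_i u_i^2/\alpha_i^2$, $\mathcal{P}=\sum_i\alpha_i^2u_i^2=\sum_\ell 2\alpha_\ell^2\bar h_\ell/\gamma_\ell$, with $\aa$ as in \eqref{eq:aa}, i.e.\ $\aa^2=\mathcal{P}/(2h_1)$. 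The candidate orbital diffeomorphism is the fibre-preserving map $\Phi$ of $\{h_1>0\}$ that multiplies the $\ell$-th block of fibre coordinates by $\alpha_\ell^2(x)/\aa(\lambda)$, i.e.\ $\Phi_i=\alpha_i^2u_i/\aa$ for \emph{all} $i=1,\dots,n$ (this matches \eqref{eq:first.m.coord} for $i\le m$). A direct computation gives $h_2\circ\Phi=h_1$, so $\Phi(H_1)\subset H_2$, and shows that $\Phi$ is a diffeomorphism of $\{h_1>0\}$ onto $\{h_2>0\}$: inverting $\Phi_i=\alpha_i^2u_i/\aa$ yields $\aa^2=\bigl(\sum_{i\le m}\Phi_i^2/\alpha_i^2\bigr)/\bigl(\sum_{i\le m}\Phi_i^2/\alpha_i^4\bigr)$, a smooth positive function of $\Phi$ on $\{h_2>0\}$, from which $u$ is recovered.

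It then remains to verify \eqref{orbeq}, $\Phi_*\vec{h}_1=\aa\,\vec{h}_2\circ\Phi$. By Lemma~\ref{prop} this is equivalent to \eqref{eq:aa}, \eqref{eq:first.m.coord}, \eqref{eq:220} and \eqref{eq:221}; the first two hold by construction. For \eqref{eq:220}--\eqref{eq:221}, which involve $\Phi_k$ with $k>m$, I would substitute $\Phi_k=\alpha_k^2u_k/\aa$ together with the structure coefficients of the product-compatible frame. Because $D$ is a product, brackets of vector fields belonging to two distinct factors vanish, so the only ``mixed'' structure coefficients come from differentiating the conformal factors and are carried by the logarithmic derivatives $X_i(\log\gamma_{\ell'})$; within a single factor the structure coefficients are those of the $\bar g_\ell$-frame corrected by $X_i(\log\gamma_\ell)$. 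After this substitution, \eqref{eq:220}--\eqref{eq:221} reduce to scalar relations among the $\beta_\ell$, the $\alpha_\ell^2=\beta_\ell\prod_{\ell'}\beta_{\ell'}$ and the $\gamma_\ell=\prod_{\ell'\neq\ell}|\beta_{\ell'}^{-1}-\beta_\ell^{-1}|$ --- in essence the partial-fraction identities for $\prod_\ell(t-\beta_\ell^{-1})^{-1}$ and its derivative evaluated at $t=0$ --- which are exactly the relations behind Lemma~\ref{le:PQ} ($\vec{h}_1(\mathcal{P})=Q\mathcal{P}$) and Proposition~\ref{prop:from.div}. I expect this bookkeeping --- tracking the $X_i(\gamma_{\ell'})$ terms and the contributions of the completing directions $X_k$ ($k>m$), and separating monomials in $u$ to extract the scalar identities --- to be the main obstacle; granting it, Proposition~\ref{th:proj.to.orb} gives that $g_1$ and $g_2$ are projectively equivalent at $q$.

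For the affine statement, suppose the pair has constant coefficients, so all $\beta_\ell$, $\gamma_\ell$, $\alpha_\ell^2$ are constants. Then $g_1=\sum_\ell\gamma_\ell\bar g_\ell$ and $g_2=\sum_\ell\alpha_\ell^2\gamma_\ell\bar g_\ell$ are genuine products of sub-Riemannian metrics differing only by the constant factors $\alpha_\ell^2$ on the factors. A normal geodesic of such a product is a tuple of normal geodesics of the factor metrics run with a common parameter at constant speeds; multiplying a factor metric by a positive constant changes neither its geodesics nor the constant-speed requirement, and rescales the common parameter linearly (as one sees by writing each geodesic in its own arclength). Hence $g_1$ and $g_2$ have the same geodesics up to affine reparameterization, i.e.\ they are affinely equivalent. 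Equivalently, by Remark~\ref{re:affine}: the transition operator of $(g_1,g_2)$ has the constant eigenvalues $\alpha_\ell^2$, so projective equivalence upgrades to affine equivalence.
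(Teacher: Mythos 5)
Your proposal follows essentially the same route as the paper's Appendix~\ref{se:proof_LCpairs}: build a frame adapted to $(g_1,g_2)$ that respects the product structure (rescaling a $\bar g_\ell$-orthonormal frame by $\gamma_\ell^{-1/2}$), take the explicit candidate $\Phi_i=\alpha_{\ell(i)}^2 u_i/\aa$ for \emph{all} $i$, check it against the characterization of Lemma~\ref{prop}, and conclude via Proposition~\ref{th:proj.to.orb}, with the constant-coefficient case handled by the product-of-geodesics argument (which is also the paper's shortcut) or by Remark~\ref{re:affine}. The verification of \eqref{eq:220}--\eqref{eq:221} that you defer as bookkeeping is precisely what the paper's appendix carries out --- the vanishing/explicit form of the mixed structure coefficients of the product frame, the relation \eqref{eq:X_ialpha_LC}, and the resulting simplification $R_j=\alpha_j^2\sum_{i}\sum_{k>m}c^k_{ij}u_iu_k$ --- and it does go through as you predict.
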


The proof of this proposition requires some tedious computations and has been postponed to Appendix~\ref{se:proof_LCpairs}. We can however give here a short proof of the second statement. Indeed, note that in a Levi-Civita pair with constant coefficients, the metrics are actually of the form of product metrics, i.e.\ each of them is a linear combination of metrics $\bar{g}_\ell$ and each of the sub-Riemannian manifolds $(M, D, g_1)$ and $(M, D, g_2)$ is locally a product of some sub-Riemannian manifolds $(M_\ell, D_\ell,\bar{g}_\ell)$. Assume for simplicity that $N=2$ in~\eqref{met1} (the general case can be treated in the same way). Then a trajectory $x(\cdot) = (\bar{x}_1, \bar{x}_2)(\cdot)$ is an energy minimizer of $g_1$ if and only if $\bar{x}_1(\cdot)$ and $\bar{x}_2(\cdot)$ are energy minimizers of $\bar{g}_1$ and $\bar{g}_2$ respectively. The same holds for $g_2$. As a consequence, the metrics $g_1$ and $g_2$ are affinely equivalent (and not proportional if $\alpha_1^2 \neq \alpha_2^2$).

The main open question is \emph{whether under some natural regularity assumption the generalized Levi-Civita pairs are the only pairs of the projectively equivalent metrics}.

\subsection{Levi-Civita theorem  and  its generalizations}

The preceding question has a positive answer in the Riemannian case, that is when $D=TM$. Indeed, in that case the local classification of projectively equivalent metrics near generic points has been established by \cite{Levi-Civita1896} in any dimension. The classification of affinely equivalent metrics is a consequence of \cite[Th.\ p.~303]{Eisenhart1923}. We summarize all these results in the following theorem.

\begin{Theorem}
\label{LCthm}
Assume $\dim M>1$. Then two Riemannian metrics on $M$ are non-trivially projectively equivalent in a neighborhood of a stable point $q$  if and only if they form a Levi-Civita pair at $q$. They are moreover affinely equivalent if and only if the pair has constant coefficients.
\end{Theorem}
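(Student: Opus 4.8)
The plan is to establish the ``only if'' direction; the ``if'' direction is exactly Proposition~\ref{le:LCpairs} (together with the observation that, since $\dim M>1$, a Levi-Civita pair is constantly proportional precisely when $N=1$, so non-triviality corresponds to $N\geq 2$). For the converse I would specialize the orbital-diffeomorphism machinery of Sections~\ref{se:orb_diffeo} and~\ref{se:integrability} to the Riemannian case $D=TM$, i.e.\ $m=n$, and then integrate the resulting equations as in the classical works of Levi-Civita~\cite{Levi-Civita1896} and Eisenhart~\cite{Eisenhart1923} (see also the symplectic reformulation in~\cite{Matveev-Topalov2003}). So assume $g_1,g_2$ are projectively and non-trivially equivalent near a stable point $q$. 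Since $D=TM$ one checks that $\dim J^{(1)}_\lambda=n+m=2n$, so every covector is ample, and Proposition~\ref{th:orb.to.proj} yields an orbital diffeomorphism $\Phi$ near each $\lambda\in H_1\cap\pi^{-1}(q)$. Choosing a $g_1$-orthonormal frame $X_1,\dots,X_n$ that diagonalizes the transition operator, with eigenvalues $\alpha_1^2,\dots,\alpha_m^2$ of which $N=N(q)$ are distinct, smooth and of locally constant multiplicity, we note that when $m=n$ there are no unknowns $\Phi_k$ with $k>m$: Lemma~\ref{prop} then collapses to $\aa$ given by~\eqref{eq:aa}, $\Phi_k=\alpha_k^2 u_k/\aa$ by~\eqref{eq:first.m.coord}, equation~\eqref{eq:221} vacuous, and~\eqref{eq:220} degenerating to the polynomial identities $R_j\equiv 0$ in $u$, $j=1,\dots,n$, with $R_j$ as in~\eqref{Rj}. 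This single family of identities carries all the information.

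Next I would unpack $R_j\equiv 0$. Using Lemma~\ref{le:PQ}, i.e.\ $\vec h_1(\aa^2)/\aa^2=\vec h_1(\mathcal P)/\mathcal P=Q$ with $Q=\sum_i (X_i(\alpha_i^2)/\alpha_i^2)u_i$ from~\eqref{Q}, and substituting the expression~\eqref{eq:h1} of $\vec h_1$, the identity $R_j\equiv 0$ becomes a vanishing quadratic form in $u$; comparing the coefficients of the monomials $u_iu_k$ produces a first-order system on the eigenvalues and the structure coefficients $c_{ij}^k$. This system recovers and sharpens Proposition~\ref{prop:from.div} and Corollary~\ref{coro:from.div2} in the present case: one gets $X_i(\alpha_j^2)=\tfrac{1}{2}(\alpha_j^2/\alpha_i^2)\,X_i(\alpha_i^2)$ for $i\neq j$; the eigendistributions $E_\ell=\ker(S_q-\alpha_\ell^2\,\mathrm{Id})$ are involutive; if $\dim E_\ell\geq 2$ then every $X_i$ tangent to $E_\ell$ annihilates all the eigenvalues; and $X(\alpha_{\ell'}^2/\alpha_\ell)=0$, $X(\alpha_{\ell'}/\alpha_{\ell''})=0$ whenever $X$ is tangent to $E_\ell$ and $\ell,\ell',\ell''$ are pairwise distinct (Corollary~\ref{coro:from.div2}(ii)--(iii)).

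From here the normal form is built as in the classical proof. By Frobenius, the orthogonal decomposition $TM=E_1\oplus\dots\oplus E_N$ into involutive subbundles gives local coordinates $x=(\bar x_1,\dots,\bar x_N)$, $\bar x_\ell\in\R^{n_\ell}$ with $n_\ell=\dim E_\ell$, in which $E_\ell=\mathrm{span}\{\partial_{\bar x_\ell}\}$; so $D=TM$ is tautologically a product distribution in the sense of~\eqref{eq:product_distrib}. Integrating the closed $1$-forms $d\log(\alpha_j^2/\alpha_i^2)$ restricted to the leaves, and using the cross-vanishing relations just listed, one writes the eigenvalues and block conformal factors in the form~\eqref{met2} for functions $\beta_\ell$ depending only on $\bar x_\ell$, constant whenever $n_\ell>1$; a direct computation then identifies $g_1$ and $g_2$ with the twisted-product metrics~\eqref{met1}, so $(g_1,g_2)$ is a Levi-Civita pair at $q$, necessarily with $N\geq 2$ by non-triviality. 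Finally, if $g_1,g_2$ are in addition affinely equivalent, Proposition~\ref{le:constant_alphai} forces all $\alpha_i^2$ to be constant, hence by~\eqref{met2} the $\beta_\ell$ may be taken constant, i.e.\ the pair has constant coefficients; the converse is the second assertion of Proposition~\ref{le:LCpairs}.

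The hard part is the third paragraph: turning the infinitesimal relations on $(\alpha_\ell^2,c_{ij}^k)$ into the \emph{explicit} twisted-product form~\eqref{met1}--\eqref{met2}. In the affine case the metric splits as a genuine Riemannian product and this is the classical de Rham / holonomy splitting; but in the projective case the factors are twisted by the conformal weights $\gamma_\ell$, and one must verify simultaneously that each $\beta_\ell$ can be realized as a function of a single coordinate block and that the leaf-by-leaf adjustment of coordinates meets no further obstruction. This bookkeeping — rather than any conceptual difficulty — is where I expect the bulk of the work to lie, and it is precisely the content of Levi-Civita's original computation.
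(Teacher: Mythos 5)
The paper does not actually prove Theorem~\ref{LCthm}: it is stated as a summary of the classical results of Levi-Civita \cite{Levi-Civita1896} and Eisenhart \cite{Eisenhart1923}, and the only argument the authors offer is the conceptual explanation of the affine case via the de Rham decomposition \cite{deRham1952}. Your route is therefore genuinely different in spirit: you specialize the orbital-diffeomorphism machinery to $m=n$, and the observations you make there are correct --- every covector is ample because $J^{(1)}_\lambda=T_\lambda(T^*M)$, the unknowns $\Phi_k$ with $k>m$ disappear, \eqref{eq:221} is vacuous, and \eqref{eq:220} collapses to the quadratic identities $R_j\equiv 0$ with $R_j$ as in \eqref{Rj}. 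This is a worthwhile unification: it derives the classical first-order system on the $\alpha_\ell^2$ and the $c_{ij}^k$ from the paper's own Hamiltonian formalism rather than from the geodesic or connection equations, and it makes the Riemannian theorem a degenerate case of the fundamental algebraic system.

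That said, as a standalone proof there is a gap exactly where you flag one. The passage from the coefficient identities of $R_j\equiv 0$ to the normal form \eqref{met1}--\eqref{met2} --- the involutivity of the eigendistributions of the transition operator (which does \emph{not} follow from Corollary~\ref{coro:from.div2}\emph{(i)}, vacuous when $D=TM$, and must be extracted directly from the relations $(\alpha_j^2-\alpha_k^2)c_{ij}^k+(\alpha_j^2-\alpha_i^2)c_{kj}^i=0$ and their permutations), the construction of adapted product coordinates, and the integration of the eigenvalue relations into functions $\beta_\ell(\bar x_\ell)$ constant when $n_\ell>1$ --- is asserted rather than carried out, and it is precisely the content of \cite{Levi-Civita1896}. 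So what you have is a correct reduction of the theorem to the cited classical computation, not an independent proof of it; this puts you on essentially the same footing as the paper, which also outsources that step, but the write-up should say so explicitly rather than presenting the third paragraph as established.
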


We can actually give a rather short explanation of the classification of affinely equivalent Riemannian metric based on the de Rham  decomposition theorem, \cite{deRham1952}. Indeed, a simple analysis of the geodesic equation implies that two Riemannian  metrics are affinely equivalent if and only if they have the same Levi-Civita connection. Since the Levi-Civita connection is parallel with respect to the metric,  a metric with given Levi-Civita connection on a connected manifold is determined by its value at one point $q$. Besides, it must be invariant with respect to the holonomy group (or the reduced holonomy group for the local version of the problem). If the action of the holonomy group on $T_q M$ is irreducible, the Riemannian metric is uniquely determined by its Levi-Civita connection, i.e.\ it is affinely rigid. On the other hand, if the action of the holonomy group is reducible, then by the de Rham decomposition theorem the Riemannian metric becomes the direct product of Riemannian metrics and any metric which is affinely equivalent to it is such that the metrics can be represented as in \eqref{met1} with all functions $\beta_\ell$ being constant.

Our  main open question has a positive answer as well for sub-Riemannian metrics on contact and quasi-contact distributions, which are typical cases of corank $1$ distributions (i.e.\ $m=n-1$). Recall that a \emph{contact distribution} $D$ on a $(2k+1)$-dimensional manifold $M$, $k>0$,  is a rank-$2k$ distribution for which there exists a 1-form $\omega$ such that  at every $q\in M$, $D(q) = \ker \omega(q)$ and $\left. d \omega(q)\right|_{D(q)}$ is non-degenerate. A \emph{quasi-contact distribution} $D$ on a $2k$-dimensional manifold $M$, $k>1$,  is a rank-$(2k-1)$ distribution for which there exists a  1-form $\omega$ such that at every $q\in M$, $D(q) = \ker \omega(q)$ and $\left. d \omega(q)\right|_{D(q)}$  has a one-dimensional kernel. The main result of \cite{Z} can be formulated in the following way.

\begin{Theorem}[\cite{Z}]
\label{Zelenkothm}
Two sub-Riemannian metrics on a contact or a quasi-contact distribution are non-trivially projectively equivalent at a stable point $q$ if and only if they form a Levi-Civita pair at $q$.
\end{Theorem}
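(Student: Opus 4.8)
\medskip
\noindent\emph{Proof proposal.} The ``if'' direction is exactly Proposition~\ref{le:LCpairs}, so all the work is in the converse. Suppose $g_1,g_2$ are projectively equivalent and not constantly proportional at a stable point $q$ of a contact or quasi-contact distribution $D$. The plan is to begin by feeding this into the machinery of Sections~\ref{se:orb_diffeo}--\ref{se:integrability}: by Theorem~\ref{th:Agr} there is a covector over $q$ ample with respect to $g_1$, so Proposition~\ref{th:orb.to.proj} produces an orbital diffeomorphism between $\vec h_1$ and $\vec h_2$ near it, and then Proposition~\ref{prop:divty} guarantees that $(g_1,g_2)$ and $(g_2,g_1)$ both satisfy the first divisibility condition. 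Consequently Proposition~\ref{prop:from.div} and Corollary~\ref{coro:from.div2} hold near $q$. I would then fix a frame $\{X_1,\dots,X_n\}$ adapted to $(g_1,g_2)$ diagonalising the transition operator, with eigenvalues $\alpha_1^2,\dots,\alpha_m^2$ of which $N$ are distinct, and write $V_\ell=\mathrm{span}\{X_i(q):i\in I_\ell\}\subset D_q$ for the eigenspaces.

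The second step uses that a contact or quasi-contact distribution has corank one and step two to make the fundamental algebraic system of Proposition~\ref{Aprop} effectively finite. Here $\widetilde\Phi$ consists of the single function $\Phi_n$, and for an ample covector $\dim J^{(2)}_\lambda=2n$; hence by Lemma~\ref{Theorem.rank} the first layer $A^1$ already has full rank $1$, and the compatibility of $A\widetilde\Phi=b$ is controlled by the two layers $A^1\widetilde\Phi=b^1$, $A^2\widetilde\Phi=b^2$ only. Writing these out in the chosen frame via \eqref{eq:220}, \eqref{Rj}, \eqref{elem.A}, \eqref{term.of.b.rec.form} gives a finite but strongly overdetermined family of polynomial identities in the fibre coordinates $u$; matching monomials and combining with the identities of Proposition~\ref{prop:from.div} -- in particular $[X_i,X_j](q)\notin D(q)\Rightarrow\alpha_i(q)=\alpha_j(q)$ and $X_i(\alpha_j^2/\alpha_i^2)=2c^j_{ij}(1-\alpha_j^2/\alpha_i^2)$ -- should yield that the eigenspaces $V_\ell$ are pairwise orthogonal for the curvature form $\Omega=d\omega|_{D}$ of the structure (this part is already contained in Corollary~\ref{coro:from.div2}\emph{(i)}), that the structure coefficients mixing distinct eigenblocks vanish, and that each $\alpha_\ell^2$ depends only on the variables attached to its own block.

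The third step is to turn these data into the normal form \eqref{met1}--\eqref{met2}. The crucial observation is that the transverse direction $T_qM/D_q$ is one-dimensional, while any eigenblock $V_\ell$ on which $\Omega$ does not vanish has its bracket-closure hitting that line; since the $V_\ell$ are $\Omega$-orthogonal, at most one eigenblock can carry a nonzero $\Omega$. In the contact case $\Omega$ is nondegenerate, so this forces $N=1$; then $g_2=\alpha^2g_1$ and the conformal instance of the (now finite) fundamental system forces $\alpha$ to be constant, contradicting non-triviality -- so contact metrics admit no non-trivial projective equivalence, and, as a contact distribution has no nontrivial product structure, the equivalence of the theorem holds with both sides false. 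In the quasi-contact case $\Omega$ has a one-dimensional radical, which must be tangent to a single eigenblock; the argument above then gives $N\le2$, the ``active'' block being the contact part and the second (if present) being one-dimensional and tangent to the radical. Using the vanishing of the mixed coefficients one integrates, by Frobenius, the eigenblocks together with their own iterated brackets into pairwise commuting integrable distributions, producing a product coordinate system $x=(\bar x_1,\bar x_2)$ with $D=D_1\times T\R$, $D_1$ contact; plugging this back into the fundamental system and using the remaining relations of Proposition~\ref{prop:from.div} recovers the twisted-product shape \eqref{met1} (the factors $\gamma_\ell$ being obtained by integrating $X_i(\alpha_j^2/\alpha_i^2)=2c^j_{ij}(1-\alpha_j^2/\alpha_i^2)$), and Corollary~\ref{coro:from.div2}\emph{(ii)--(iii)} applied within a block forces $\beta_\ell$ to be constant whenever $n_\ell>1$. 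This exhibits $(g_1,g_2)$ as a Levi-Civita pair at $q$.

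The step I expect to be the main obstacle is the third one: converting the purely algebraic output -- orthogonality of the eigenspaces, vanishing of the mixed structure coefficients, block-wise dependence of the $\alpha_\ell$ -- into an honest product coordinate system together with the exact form \eqref{met1}. This is a simultaneous integrability argument for the distribution and for the conformal factors, and in the quasi-contact case it requires in addition isolating the radical (``Reeb-type'') direction and identifying it with the $\R$-factor; pinning down all the constancy conditions on the $\beta_\ell$ is where the bulk of the computation of \cite{Z} is concentrated.
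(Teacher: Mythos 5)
First, a point of comparison: the paper does not actually prove this statement. Theorem~\ref{Zelenkothm} is quoted from \cite{Z}, and the closest this paper comes to a proof is the nilpotent-approximation analogue (Theorems~\ref{th:Carnot.prod} and~\ref{th:nilpotent_equiv}), which only produces a product structure for the Tanaka symbol and a Levi-Civita pair for the nilpotent approximations, not the Levi-Civita normal form for the metrics themselves. So your proposal is really an attempt to reconstruct the argument of \cite{Z}, and judged on its own terms it has a genuine gap at its pivotal step.

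The gap is the claim that ``since the $V_\ell$ are $\Omega$-orthogonal, at most one eigenblock can carry a nonzero $\Omega$.'' This does not follow. Corollary~\ref{coro:from.div2}\emph{(i)} — a consequence of the \emph{first} divisibility condition only — gives $[V_\ell,V_{\ell'}]\subset D$ for $\ell\neq\ell'$, i.e.\ $\Omega$-orthogonality of distinct eigenblocks; but in the contact case, where $\Omega$ is nondegenerate, orthogonality of the blocks forces $\Omega$ to be nondegenerate on \emph{each} block, so every block is symplectic and carries nonzero $\Omega$. Nothing you have established rules out, say, two $2$-dimensional symplectic blocks inside a $5$-dimensional contact structure whose self-brackets both hit the same transverse line. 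What is actually needed is that the graded subalgebras generated by the blocks form a \emph{direct} sum, and that is precisely the hard part: in this paper it is extracted from the full infinite algebraic system via the identities \eqref{eq:valuePhiell} in the proof of Theorem~\ref{th:Carnot.prod}, and in \cite{Z} it rests on compatibility conditions of the system going strictly beyond the first divisibility condition. The same missing ingredient resurfaces twice more in your sketch: in the contact case you must still pass from ``$N=1$, hence conformal'' to ``constantly proportional'', which is a further nontrivial computation rather than an automatic consequence of the two-layer system; and in the quasi-contact case the Frobenius integration to honest product coordinates and the recovery of \eqref{met1}--\eqref{met2} require the vanishing (not merely the $D$-valuedness) of the mixed brackets together with a compatible integration of the relations $X_i(\alpha_j^2/\alpha_i^2)=2c^j_{ij}(1-\alpha_j^2/\alpha_i^2)$, all of which you assert rather than derive. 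In short, everything you actually justify follows from the first divisibility condition alone, and that condition is not sufficient for the classification — which is exactly why the present paper stops at the weaker integrability and nilpotent product-structure statements and cites \cite{Z} for the full corank-one result.
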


\begin{rk}
This theorem and Proposition~\ref{le:constant_alphai} imply that, in the contact and quasi-contact cases, two affinely equivalent metrics form a Levi-Civita pair with constant coefficients.
\end{rk}

Since contact distributions are never locally equivalent to a non-trivial product distribution, they admit only Levi-Civita pairs with $N = 1$.
\begin{Coro}
On a contact distribution, every sub-Riemannian metric is projectively rigid.
\end{Coro}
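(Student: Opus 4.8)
The plan is to combine Theorem~\ref{Zelenkothm} with the elementary observation that a contact distribution admits no nontrivial product structure. First I would recall why the latter holds: if $D = D_1 \times \cdots \times D_N$ with $N \geq 2$, then for a point $q = (\bar x_1, \dots, \bar x_N)$ the bracket $[\cdot,\cdot]$ on $D(q)$ respects the block decomposition $D(q) = \bigoplus_\ell D_\ell(\pi_\ell(q))$ in the sense that the bracket of a section of $D_\ell$ with a section of $D_{\ell'}$ vanishes for $\ell \neq \ell'$. Consequently $d\omega|_{D(q)}$, being (up to sign) the obstruction-to-integrability $2$-form, is block diagonal, and hence its radical is nonzero as soon as one block $D_\ell$ has odd dimension or as soon as there are at least two blocks and one of them is even-dimensional but the total parity forces a kernel. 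More directly: a symplectic form on $D(q)$ cannot be block diagonal with more than one block unless all blocks are even-dimensional, and even then the restriction of $d\omega$ to the product would make each factor a contact-type object, which is incompatible with the bracket-generating requirement on each $D_\ell$ combined with the contact normalization. The cleanest route is simply to invoke that a contact distribution $D(q) = \ker\omega(q)$ with $d\omega|_{D(q)}$ nondegenerate cannot decompose as a direct sum respected by the bracket: if $v \in D_\ell(q)$ and $w \in D_{\ell'}(q)$ with $\ell \neq \ell'$, then $d\omega(v,w) = -\omega([X,Y]) = 0$ since $[X,Y] \in D$; thus the blocks are $d\omega$-orthogonal, and nondegeneracy of $d\omega|_{D(q)}$ forces $d\omega|_{D_\ell(q)}$ to be nondegenerate for each $\ell$, i.e.\ each $D_\ell$ is itself contact-type, but then a dimension count shows $\dim D_\ell$ is even for every $\ell$, and the ambient $M$ being odd-dimensional (as required for a contact structure) gives $n = \sum n_\ell$ with all $n_\ell$ even, a contradiction. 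Hence $N = 1$.

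Given this, the argument concludes quickly. Let $g$ be a sub-Riemannian metric on a contact distribution $D$ and suppose $g'$ is projectively equivalent to $g$ at a point $q$. By passing to a stable point (the set of stable points is open and dense, and projective equivalence is a local and open condition, so $g$ and $g'$ remain projectively equivalent at stable points in any neighborhood of $q$), Theorem~\ref{Zelenkothm} applies: either $(g,g')$ are trivially equivalent, in which case $g' = cg$ for a constant $c$, or they form a Levi-Civita pair at that stable point. But a Levi-Civita pair comes with a nontrivial product structure on $D$ with $N \geq 2$ factors, which we have just ruled out. Hence only the trivial alternative survives, so $g' = cg$, i.e.\ $g$ is projectively rigid. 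Since $q$ was arbitrary, every sub-Riemannian metric on a contact distribution is projectively rigid.

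The main obstacle — really the only nonroutine point — is pinning down the parity/dimension argument that excludes a product structure on a contact distribution, i.e.\ making precise the step ``$d\omega|_{D(q)}$ is block diagonal with respect to the product decomposition, hence each block is symplectic, hence even-dimensional, hence the total dimension is even, contradicting oddness.'' One must be slightly careful that the product-structure hypothesis is about $D$ as a distribution (existence of coordinates in which $D = D_1 \times \cdots \times D_N$) rather than a pointwise linear-algebra statement, so the bracket-compatibility of the decomposition must be invoked at the level of sections, which is exactly what gives $d\omega(v,w)=0$ across distinct factors. A remark worth inserting is that this same parity obstruction is precisely why quasi-contact distributions (living on even-dimensional manifolds) are \emph{not} automatically product-free, and indeed Theorem~\ref{Zelenkothm} leaves room for nontrivial Levi-Civita pairs there — so the corollary is genuinely special to the contact case.
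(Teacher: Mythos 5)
Your overall route is exactly the paper's: invoke Theorem~\ref{Zelenkothm} at a stable point and observe that a contact distribution admits no nontrivial product structure, so only $N=1$ Levi-Civita pairs (i.e.\ constantly proportional metrics, since $\dim M>1$) can occur. The paper simply asserts the product-free property of contact distributions; you try to prove it, and your key observation --- that for sections $X$ of $D_\ell$ and $Y$ of $D_{\ell'}$ with $\ell\neq\ell'$ one has $[X,Y]=0$, hence $d\omega(v,w)=-\omega([X,Y])=0$, so the blocks $D_\ell(q)$ are $d\omega$-orthogonal and each restriction $d\omega|_{D_\ell(q)}$ must be nondegenerate --- is correct and is the right starting point.

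However, the parity count you draw from it does not work as stated. From ``each $D_\ell$ is contact-type'' you conclude that every $n_\ell$ is even and that this contradicts $n$ odd; but $n_\ell$ is the dimension of the factor $\R^{n_\ell}$, not the rank of $D_\ell$, and a contact-type factor would have $n_\ell=\mathrm{rank}\,D_\ell+1$ odd, so the sum $\sum n_\ell$ comes out odd and no parity contradiction appears. The clean way to finish from your own observation is a corank count: nondegeneracy of $d\omega|_{D_\ell(q)}$ forces each $D_\ell$ to be non-involutive, hence a \emph{proper} subbundle of $T\R^{n_\ell}$, so each factor contributes corank at least $1$ and the total corank of $D$ is at least $N\geq 2$, contradicting the fact that a contact distribution has corank $1$. (Equivalently: since the total corank is $1$, all but one factor satisfy $D_\ell=T\R^{n_\ell}$, and any such involutive factor lies in the kernel of $d\omega|_{D(q)}$.) With this repair your argument is complete and coincides with the paper's.
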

For a generic corank one distribution $D$ on an odd dimensional manifold $M$, there is an open and dense subset of $M$ where $D$ is locally contact. By continuity we obtain the following result.
\begin{Coro}
Let $M$ be an odd-dimensional manifold. Then, for a generic corank one distribution on $M$, all metrics are projectively rigid.
\end{Coro}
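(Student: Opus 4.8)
The plan is to use the fact recalled just above --- that for a generic corank-one distribution $D$ on the odd-dimensional manifold $M$ there is an open and dense subset $U\subset M$ on which $D$ is locally a contact distribution --- together with the preceding corollary on contact distributions, and then to propagate the local rigidity obtained on $U$ to all of $M$ by a continuity argument. Concretely, fix such a generic $D$ and its contact locus $U$, let $g_1$ be an arbitrary sub-Riemannian metric on $(M,D)$, and let $g_2$ be projectively equivalent to $g_1$ near some point $q_0\in M$; the goal is to show that $g_2$ is constantly proportional to $g_1$ near $q_0$.

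First I would work on $U$. Since projective equivalence is a local notion, $g_1$ and $g_2$ restrict to projectively equivalent sub-Riemannian metrics on the contact manifold $(V\cap U,\,D|_{V\cap U})$, where $V$ is a small connected neighbourhood of $q_0$ on which both metrics are defined (note $V\cap U\neq\emptyset$ as $U$ is dense). By the preceding corollary every metric on a contact distribution is projectively rigid, hence near each point of $V\cap U$ one has $g_2=c\,g_1$ with $c$ a positive constant. In terms of the transition operator $S$ of the pair $(g_1,g_2)$ --- a smooth field of $g_1$-self-adjoint endomorphisms of $D$ defined on all of $V$ --- this says precisely that $S=\tilde c\,\mathrm{Id}_D$ on $V\cap U$ for a locally constant function $\tilde c\colon V\cap U\to\R_{>0}$.

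It then remains to pass from $V\cap U$ to $V$. Because $V\cap U$ is dense in $V$ and $S$ is continuous, $S_q$ is a scalar operator for every $q\in V$; hence $\tilde c$ extends to a \emph{smooth} positive function on $V$ (for instance $\tilde c=\frac1m\,\mathrm{tr}\,S$, where $m=n-1$ is the rank of $D$), and $g_2=\tilde c\,g_1$ on $V$. Since $\tilde c$ is locally constant on the dense open set $V\cap U$, its differential vanishes there, so $d\tilde c\equiv 0$ on $V$ by continuity, and thus $\tilde c$ is constant on the connected set $V$. Therefore $g_2=c\,g_1$ near $q_0$ with $c$ constant, i.e.\ $g_2$ is trivially equivalent to $g_1$; as $g_1$ and $q_0$ were arbitrary, every sub-Riemannian metric on $(M,D)$ is projectively rigid.

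The only genuinely non-routine ingredient is the input from the contact case (the preceding corollary, ultimately resting on Theorem~\ref{Zelenkothm}); everything else is the density argument. The one step that deserves care is the extension of the proportionality factor: a function that is continuous on $V$ and locally constant on a dense open subset need not itself be locally constant, so one really needs that $\tilde c$ is smooth on all of $V$ --- which here is automatic, $\tilde c$ being a linear-algebraic invariant (an eigenvalue) of the transition operator built from the two given smooth metrics --- in order to conclude from $d\tilde c|_{V\cap U}=0$ that $d\tilde c\equiv 0$ on $V$.
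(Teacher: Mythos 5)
Your argument is correct and follows exactly the route the paper intends: the paper's entire proof is the observation that a generic corank-one distribution is locally contact on an open dense subset together with the phrase ``by continuity'', and your proposal is a careful elaboration of that continuity step. Your attention to the point that one needs smoothness of the proportionality factor $\tilde c$ (rather than mere continuity) to pass from local constancy on the dense set $V\cap U$ to constancy on $V$ is exactly the detail the paper leaves implicit, and your resolution via $\tilde c=\frac1m\,\mathrm{tr}\,S$ and $d\tilde c\equiv 0$ is sound.
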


\section{Left-invariant metrics on Carnot groups}

Let us study the particular case of affine and projective equivalence of left-invariant sub-Riemannian metrics on Carnot groups. This case plays an important role in sub-Riemannian geometry since Carnot groups appear as tangent cones  to sub-Riemannian manifolds near generic points.

\begin{Def}
	\label{def:Carnot}
	A Carnot group $\mathbb{G}$ of step $r \geq 1$ is a connected and simply connected nilpotent Lie group whose Lie algebra $\mathfrak{g}$ admits a step $r$ grading
$$
\mathfrak{g} = V^1 \oplus \cdots \oplus V^r,
$$
and is generated by its first component, that is, $[V^j, V^1] = V^{j+1}$ for $1 \leq j \leq r-1$.  A graded Lie algebra satisfying the last property is called \emph{fundamental}.
\end{Def}

A Carnot group is canonically endowed with a bracket generating distribution $D_\mathbb{G}$: identifying $\mathfrak{g}$ with the tangent space $T_e\mathbb{G}$ to $\mathbb{G}$ at the identity $e$, $D_\mathbb{G}$ is the distribution spanned by the left-invariant vector fields whose value at the identity belongs to $V^1$. Hence $D_\mathbb{G}^k(e) =  V^1 \oplus \cdots \oplus V^k$ for $k \leq r$, and the step $r$ of the Carnot group is exactly the step of the distribution.

Given an inner product on $V^1$, we can extend it to a Riemannian metric on $D_\mathbb{G}$ by left-translations. Such a sub-Riemannian metric on $(\mathbb{G},D_\mathbb{G})$ is called a \emph{left-invariant sub-Riemannian metric on $\mathbb{G}$}.

\begin{Theorem}
\label{th:Carnot.prod}
Let $g_1, g_2$ be two left-invariant sub-Riemannian metrics on a Carnot group $\mathbb{G}$. If $g_1$ and $g_2$ are non-trivially projectively equivalent, then $D_\mathbb{G}$ admits a non-trivial product structure and $(g_1, g_2)$ is a Levi-Civita pair with constant coefficients.
\end{Theorem}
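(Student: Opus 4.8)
The plan is to turn the statement into a purely Lie-algebraic decomposition of $\mathfrak g$ and then read off the product structure and the form of the metrics.

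First I would fix a point $q_0 \in \mathbb{G}$ — every point is regular and the spectral size $N$ is constant, since both metrics are left-invariant, so every point is stable — and a left-invariant frame $\{X_1,\dots,X_n\}$ adapted to $(g_1,g_2)$: here $X_1,\dots,X_m$ are left-invariant, $g_1$-orthonormal, and diagonalize the transition operator $S$, while $X_{m+1},\dots,X_n$ are taken among iterated brackets of $X_1,\dots,X_m$ (possible because $\mathfrak g$ is fundamental). Then all structure constants $c_{ij}^k$ are constant, and since $S$ is left-invariant its eigenvalues $\alpha_1^2,\dots,\alpha_m^2$ are constant. Write $W_\ell\subset V^1$ for the eigenspace attached to the $\ell$-th distinct eigenvalue $\alpha_\ell^2$, so $V^1=W_1\oplus\cdots\oplus W_N$ is a $g_1$-orthogonal decomposition. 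Pick an ample covector $\lambda$ over $q_0$ (Theorem~\ref{th:Agr}); by Proposition~\ref{th:orb.to.proj}, $\vec h_1$ and $\vec h_2$ are orbitally diffeomorphic near $\lambda$, so Propositions~\ref{prop:divty} and~\ref{1divntegral} apply. Since the $\alpha_i$ are constant, Lemma~\ref{le:PQ} gives $\vec h_1(\mathcal P)=0$, hence $\vec h_1(\aa)=0$ by~\eqref{eq:h1P}, so by Remark~\ref{re:affine} the two metrics are in fact affinely equivalent. Non-triviality forces $N\geq 2$: if $N=1$ then $g_2=\alpha_1^2 g_1$ with $\alpha_1$ constant, i.e.\ $g_1,g_2$ are constantly proportional. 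The goal is now to show that $\mathfrak g=\bigoplus_{\ell=1}^N\mathfrak h_\ell$, where $\mathfrak h_\ell:=\mathrm{Lie}(W_\ell)$, is a direct sum of graded ideals.

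Next I would establish that the summands commute. By Proposition~\ref{prop:divty} both $(g_1,g_2)$ and $(g_2,g_1)$ satisfy the first divisibility condition, so Proposition~\ref{prop:from.div} applies; its first item gives $[X_i,X_j]\in D$ whenever $\alpha_i\neq\alpha_j$, and combined with fundamentality of $\mathfrak g$ (so $[V^1,V^1]\subset V^2$ and $V^1\cap V^2=0$) this yields $[W_\ell,W_{\ell'}]=0$ for $\ell\neq\ell'$. A straightforward induction on bracket length using the Jacobi identity then upgrades this to $[\mathfrak h_\ell,\mathfrak h_{\ell'}]=0$ for $\ell\neq\ell'$; in particular each $\mathfrak h_\ell$ is a graded ideal, $\mathfrak g=\mathfrak h_1+\cdots+\mathfrak h_N$, and it remains only to prove that this sum is direct.

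The directness is the crux, and it is where the full strength of the orbital diffeomorphism must be used — the first-divisibility consequences alone do not suffice, as the $5$-dimensional Heisenberg algebra shows: split its $V^1$ as a $g_1$-orthogonal sum of two $2$-planes that are orthogonal for the intrinsic symplectic form on $V^1$; then $[W_1,W_2]=0$ and all remaining hypotheses of Proposition~\ref{prop:from.div}/Corollary~\ref{coro:from.div2} hold vacuously, yet $\mathfrak h_1\cap\mathfrak h_2$ is the centre and the algebra is not a product. The extra input is the solvability of the fundamental algebraic system $A\widetilde\Phi=b$ from Proposition~\ref{Aprop}: in a left-invariant frame its coefficients are polynomials in $u$ with constant coefficients, $A$ is injective at ample covectors (Proposition~\ref{th:nonzer.min}), and Cramer's rule forces the components $\Phi_{m+1},\dots,\Phi_n$ to be specific rational functions of $u$ that must satisfy the compatibility equations~\eqref{eq:220}–\eqref{eq:221} layer by layer. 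The natural approach is to feed $[W_\ell,W_{\ell'}]=0$ into these equations and track the components of $\widetilde\Phi$ ``along'' a hypothetical nonzero homogeneous $z\in\mathfrak h_\ell\cap\mathfrak h_{\ell'}$ (necessarily central): the expression for the relevant $\Phi_k$ coming from the $\mathfrak h_\ell$-part of the system carries the factor $\alpha_\ell^2$ while the one coming from the $\mathfrak h_{\ell'}$-part carries $\alpha_{\ell'}^2$, and matching them forces $\alpha_\ell^2=\alpha_{\ell'}^2$, a contradiction (for the Heisenberg example this incompatibility is already visible in the first layer $A^1\widetilde\Phi=b^1$). Carrying out this bookkeeping cleanly for arbitrary step and arbitrary $N$ — organising the structure constants of $\mathfrak g$ by the $\Z_{\geq 0}^N$-multidegree induced by $V^1=\bigoplus_\ell W_\ell$ and proving the relation ideal is multihomogeneous — is the main technical obstacle of the proof.

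Finally, granting $\mathfrak g=\bigoplus_\ell\mathfrak h_\ell$ with each $\mathfrak h_\ell$ a graded ideal and $W_\ell=\mathfrak h_\ell\cap V^1$, one gets $\mathbb G=\mathbb H_1\times\cdots\times\mathbb H_N$, where $\mathbb H_\ell$ is the Carnot group with Lie algebra $\mathfrak h_\ell$, and $D_\mathbb G$ is the associated product distribution $D_{\mathbb H_1}\times\cdots\times D_{\mathbb H_N}$; since $N\geq 2$ this is a non-trivial product structure. By $g_1$-orthogonality of the $W_\ell$, the restriction of $g_1$ to $W_\ell$ extends to a left-invariant metric $\bar g_\ell$ on $\mathbb H_\ell$ with $g_1=\bar g_1\oplus\cdots\oplus\bar g_N$, while $g_2=\alpha_1^2\bar g_1\oplus\cdots\oplus\alpha_N^2\bar g_N$. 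Setting $\beta_\ell:=\alpha_\ell^2(\prod_k\alpha_k^2)^{-1/(N+1)}$ gives distinct positive constants with $\alpha_\ell^2=\beta_\ell\prod_{\ell'}\beta_{\ell'}$; then $\gamma_\ell=\prod_{\ell'\neq\ell}|\beta_{\ell'}^{-1}-\beta_\ell^{-1}|$ are positive constants, and after rescaling each $\bar g_\ell$ by $\gamma_\ell^{-1}$ the pair $(g_1,g_2)$ takes exactly the form~\eqref{met1}–\eqref{met2}. Hence $(g_1,g_2)$ is a Levi-Civita pair with constant coefficients (which also re-derives its affine equivalence via Proposition~\ref{le:LCpairs}).
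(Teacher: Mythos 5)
Your route is exactly the paper's: left-invariant adapted frame, constant eigenvalues, $N\geq 2$ by non-triviality, $[W_\ell,W_{\ell'}]=0$ from first divisibility plus fundamentality, Jacobi-identity upgrade to $[\mathfrak h_\ell,\mathfrak h_{\ell'}]=0$, and then directness of $\mathfrak g=\sum_\ell\mathfrak h_\ell$ as the crux, settled by overdetermining $\Phi_k$ for an index $k$ shared by two summands. Your $5$-dimensional Heisenberg example correctly explains why the commutation relations alone cannot give directness, and your final normalization $\beta_\ell=\alpha_\ell^2\bigl(\prod_k\alpha_k^2\bigr)^{-1/(N+1)}$ is a correct (and more explicit than the paper's) reduction to the form \eqref{met1}--\eqref{met2}. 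The one real gap is that you leave the crux as an acknowledged ``main technical obstacle'' rather than proving it, so the argument as written is incomplete precisely where the theorem's content lies.

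The bookkeeping you are worried about is lighter than a multihomogeneity analysis of the relation ideal. The paper proceeds as follows. Since the subspaces $V^k_\ell$ of a fixed $V^k$ may overlap, one builds a basis of $V^k$ adapted to all of them simultaneously: start with a basis of $\bigcap_\ell V^k_\ell$, extend to the span of the $(N-1)$-fold intersections using vectors from those intersections, and so on outward; this produces index sets $\mathcal L(I_\ell)$ such that $\{X_k: k\in\mathcal L(I_\ell)\}$ is a basis of $\mathfrak h_\ell$ for each $\ell$ (with the $\mathcal L(I_\ell)$ possibly overlapping). With this basis, $c_{ij}^k=0$ unless $i,j,k$ all lie in one common $\mathcal L(I_\ell)$, which together with left-invariance and the grading gives, by induction on the layer $s$, that for $j\in I_\ell$ one has $a^s_{j,k}=0$ whenever $k\notin\mathcal L(I_\ell)$ and $b^s_j=\frac{\alpha_\ell^2}{\aa}\sum_{k}a^s_{j,k}u_k$. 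The block of $A$ with rows indexed by $I_\ell$ and columns by $\mathcal L(I_\ell)$ is precisely the fundamental matrix of the Carnot group generated by $W_\ell$, hence injective for generic $u$ by Proposition~\ref{th:nonzer.min}; therefore $\Phi_k=\frac{\alpha_\ell^2}{\aa}u_k$ for every $k\in\mathcal L(I_\ell)$, and an index in $\mathcal L(I_\ell)\cap\mathcal L(I_{\ell'})$ forces $\alpha_\ell^2=\alpha_{\ell'}^2$, as you anticipated. Filling in this induction (three short computations) would complete your proof.
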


\begin{proof}
Let $g_1, g_2$ be two left-invariant sub-Riemannian metrics on  $\mathbb{G}$ which are non-trivially projectively equivalent. Set $D=D_\mathbb{G}$.
Since both metrics $g_1$ and $g_2$ are obtained by left-invariant extensions of inner products on $V^1$, it is clear that the eigenvalues $\alpha_1^2, \dots, \alpha_m^2$ of the transition operator are constant. Thus the number $N$ of distinct eigenvalues is constant and every point of $\mathbb{G}$ is stable. Note that $N$ is necessarily greater than one, otherwise the metrics would be proportional, i.e.\ trivially equivalent.

We choose the numbering of the eigenvalues $\alpha_i^2$, $i=1,\dots,m$, in such a way that $\alpha_1^2, \ldots, \alpha_N^2$ are the $N$ distinct ones. Let $X_1, \dots, X_m$ be a $g_1$-orthonormal basis of $V^1$ such that $\frac{1}{\alpha_1}X_1, \dots, \frac{1}{\alpha_m}X_m$ is orthonormal with respect to $g_2$.
For $\ell=1, \dots, N$, we denote by $I_\ell$ the set of indices $i \in \{1,\dots,m\}$ such that $\alpha_i=\alpha_\ell$, and by $V^1_\ell$ the linear subspace of $V^1$ generated by the vectors $X_i$, $i \in I_\ell$. We get
\begin{equation}
\label{eq:V1_direct_sum}
V^1=V^1_1 \oplus \cdots \oplus V^1_N.
\end{equation}
Each subspace $V^1_\ell$, $\ell=1, \dots, N$, generates a graded Lie subalgebra of $\mathfrak{g}$,
$$
\mathfrak{g}_\ell = V^1_\ell \oplus \cdots \oplus V^r_\ell, \quad \hbox{where } V^{k+1}_\ell=[V^k_\ell, V^1_\ell].
$$
Moreover, from Corollary~\ref{coro:from.div2} \emph{(i)}, we have
$$
[V^1_\ell, V^1_{\ell'}] = 0 \quad \hbox{for all } \ell \neq \ell' \in \{1, \dots, N\}.
$$
Using the Jacobi identity, this relation can be generalized as
\begin{equation}
\label{eq:zero bracket}
[V^k_\ell, V^s_{\ell'}] = 0 \quad \hbox{for } \ell \neq \ell' \in \{1, \dots, N\}, \ k,s \in \{1, \dots, r\}.
\end{equation}
Hence each homogeneous component $V^k$, $k=1, \dots, r$, admits a decomposition into a sum $V^k=V^k_1 + \cdots + V^k_N$, and the Lie algebra $\mathfrak{g}$ writes as
\begin{equation}
\label{eq:directsum}
\mathfrak{g} = \mathfrak{g}_1 + \cdots + \mathfrak{g}_N.
\end{equation}
Note that, if the sum \eqref{eq:directsum} is a direct one, then \eqref{eq:V1_direct_sum} implies that the distribution $D$ admits a product structure $D=D_1 \times \cdots \times D_N$, where $D_\ell$, $\ell=1,\dots, N$,  is the distribution spanned by the left-invariant vector fields whose value at the identity belongs to $V^1_\ell$. Thus, in order to prove that $D$ admits a non-trivial product structure, it is sufficient to prove that the sum \eqref{eq:directsum} is a direct sum, i.e.\ $\mathfrak{g}_\ell \cap \mathfrak{g}_{\ell'}$ is reduced to zero when $\ell \neq \ell'$.

The first step is to complete $\{X_1,\dots, X_m\}$ into a basis of $\mathfrak{g}$ adapted to the grading $\mathfrak{g} = V^1 \oplus \cdots \oplus V^r$. For $k=2,\dots,r$, we construct a basis of $V^k=V^k_1 + \cdots + V^k_N$ as follows.
Fix first a basis of $\cap_{1\leq  \ell\leq N} V^k_\ell$; then complete it into a basis of $\mathrm{span}\{\cup_{1\leq i\leq N} \left(\cap_{\ell \neq i} V^k_\ell \right)\}$ with vectors from $\cup_{1\leq i\leq N} \left(\cap_{\ell \neq i} V^k_\ell\right)$; then to a basis of  $\mathrm{span}\{\cup_{1\leq i\leq N} \left(\cup_{1\leq j\leq N} \left(\cap_{\ell\neq i, \ell \neq j} V^k_\ell\right)\right)\}$ with vectors from $\cup_{1\leq i\leq N} \left(\cup_{1\leq j\leq N} \left(\cap_{\ell\neq i, \ell\neq j} V^k_\ell \right)\right)$ and so on. At the last step, complete the obtained set of vectors into a basis of $V^k$.

By collecting the basis of $V^1, V^2,\dots , V^r$, we obtain a basis $\{X_1,\dots, X_n\}$ of $\mathfrak{g}$. By abuse of notations, we keep the notation $X_i$ to denote the left-invariant vector field whose value at identity is $X_i$. We have constructed in this way a frame $\{X_1,\dots, X_n\}$ of $T\mathbb{G}$ with the following properties:
\begin{itemize}
  \item it is by construction a frame adapted to $(g_1,g_2)$;
  \item it contains a basis of every $D^k_\ell$, $k=1,\dots, r$, $\ell=1,\dots,N$; for $\ell=1,\dots,N$, we denote by $\mathcal{L}(I_\ell)$ the set of indices such that $\{X_i, \ i \in \mathcal{L}(I_\ell)\}$ is a basis of $D^r_\ell$;
  \item from \eqref{eq:zero bracket}, $[X_i,X_j]=0$ if $i$ and $j$ belong to two different sets $\mathcal{L}(I_\ell)$; this implies the following property of the structure coefficients:
\begin{equation}
\label{eq:struct_coeff}
\hbox{if $i,j,k$ do not belong to the same $\mathcal{L}(I_\ell)$, then } c_{ij}^k=0;
\end{equation}
\item all structure coefficients are constant since the vector fields are left-invariant; moreover,
\begin{equation}
\label{eq:nilp}
\hbox{if $w_k \neq w_i+w_j$, then } c_{ij}^k=0,
\end{equation}
where as usual $w_i$ is the smallest integer $l$ such that $X_i \in D^l$.
\end{itemize}
The property $\mathfrak{g}_\ell \cap \mathfrak{g}_{\ell'} = \{0\}$ is equivalent to $\mathcal{L}(I_\ell) \cap \mathcal{L}(I_{\ell'})= \emptyset$, so we have to prove that the latter holds for any $\ell \neq \ell'$. \medskip

Now, Proposition~\ref{th:orb.to.proj} implies that the Hamiltonian vector fields of $g_1$ and $g_2$ are orbitally diffeomorphic near any ample covector. And, from Proposition~\ref{Aprop}, in the coordinates $(u_1,\dots,u_n)$ associated with the frame $\{X_1,\dots, X_n\}$, the orbital diffeomorphism satisfies the fundamental algebraic system $A \widetilde{\Phi} = b$.

Let us compute first the matrix $b$. Fix $\ell \in \{1, \dots, N\}$ and $j \in I_\ell$. Using~\eqref{eq:nilp} and the fact that the $\alpha_i$'s are constant, there holds
$$
b^1_j = \frac{\alpha_\ell^2}{\aa} \vec{h}_1(u_j) = \frac{\alpha_\ell^2}{\aa} \sum_{k = m+1}^{n} a^1_{j,k} u_k, \qquad \hbox{and} \qquad b^{s+1}_{j} = \vec{h}_1(b^s_{j}) \ \hbox{ for } s\geq 1.
$$
An easy induction argument gives the value
$$
b_j^s  = \frac{\alpha^2_\ell}{\aa} \sum_{k=m+1}^{n} a^s_{j,k} u_k, \quad s \in \mathbb{N}.
$$
Thus the system of equations $A \widetilde{\Phi} = b$ can be rewritten as
$$
 \sum_{k=m+1}^{n} a^s_{j,k}  \Phi_k = \frac{\alpha^2_\ell}{\aa} \sum_{k=m+1}^{n} a^s_{j,k}  u_k  \qquad \hbox{for every } j\in I_\ell, \ \ell \in \{1,\dots, N\}, \ s \in \N.
$$
In other terms, $A \widetilde{\Phi} = b$ splits into $N$ systems of equations indexed by $\ell =1,\dots, N$ of the form
\begin{equation}
\label{eq:Apsi}
 \sum_{k=m+1}^{n} a^s_{j,k} \left( \Phi_k - \frac{\alpha^2_\ell}{\aa} u_k \right)= 0 \qquad \hbox{for every } j\in I_\ell, \ s \in \N.
\end{equation}

Let us have a closer look to the coefficients $a^s_{j,k}$. Fix as before $\ell \in \{1, \dots, N\}$ and $j \in I_\ell$. First we have
$$
a^1_{j,k} = q_{jk} = \sum_{i=1}^m c_{ij}^k u_i = \sum_{i \in I_\ell} c_{ij}^k u_i,
$$
due to \eqref{eq:struct_coeff}. Using again the latter relation and the other properties of the structure coefficients, an easy induction argument shows that the recurrence formula for $a^s_{j,k}$, $s \in \N$, is
$$
a^{s+1}_{j,k} = \sum_{i \in I_\ell} u_i \vec{u}_i(a^s_{j,k}) + \sum_{l \in \mathcal{L}(I_\ell)} a^s_{j,l} \sum_{i \in I_\ell} c_{il}^k u_i.
$$
As a consequence of this formula:
\begin{itemize}
  \item if $k \not\in \mathcal{L}(I_\ell)$, then $a^{s}_{j,k}=0$; hence, for a fixed $\ell \in \{1,\dots, N\}$, the system~\eqref{eq:Apsi} writes as
  \begin{equation}
  \label{eq:Iell}
   \sum_{k\in \mathcal{L}(I_\ell)} a^s_{j,k} \left( \Phi_k - \frac{\alpha^2_\ell}{\aa} u_k \right)= 0 \qquad \hbox{for every } j\in I_\ell, \ s \in \N;
   \end{equation}
\item if $k \in \mathcal{L}(I_\ell)$, then $a^{s}_{j,k}$ is the corresponding coefficient of the matrix $A$ associated with the family of vector fields $\{X_i, i \in \mathcal{L}(I_\ell)\}$; from Proposition~\ref{th:nonzer.min}, the latter matrix has maximal rank for almost every $u$, thus \eqref{eq:Iell} implies
    \begin{equation}
    \label{eq:valuePhiell}
     \Phi_k = \frac{\alpha^2_\ell}{\aa} u_k   \qquad \hbox{for every } k \in I_\ell.
    \end{equation}
\end{itemize}

Now, assume that there exists two indices $\ell, \ell'$ in $\{1,\dots, N\}$ such that the intersection $\mathcal{L}(I_\ell) \cap \mathcal{L}(I_{\ell'})$ is non empty. For $k \in \mathcal{L}(I_\ell) \cap \mathcal{L}(I_{\ell'})$, we have from~\eqref{eq:valuePhiell}
$$
\Phi_k = \frac{\alpha^2_\ell}{\aa} u_k = \frac{\alpha^2_{\ell'}}{\aa} u_k,
$$
which implies $\ell=\ell'$. Hence $\mathcal{L}(I_\ell) \cap \mathcal{L}(I_{\ell'})= \emptyset$ for any $\ell \neq \ell'$, which implies that $\mathfrak{g}$ is decomposed into a direct sum $\mathfrak{g} = \mathfrak{g}_1 \oplus \cdots \oplus \mathfrak{g}_N$. We conclude that the distribution $D$ admits a product structure $D=D_1 \times \cdots \times D_N$ which is non-trivial since $N>1$. This proves the first part of the theorem.\medskip

It remains to prove that $(g_1, g_2)$ form a Levi-Civita pair on $D$. Set $n_\ell = \dim \mathfrak{g}_\ell$ for $\ell=1,\dots,N$ and define coordinates $x=(\bar x_1,\ldots,\bar x_N)$ on $\mathbb{G}$, where $\bar x_\ell=(x_\ell^1,\ldots, x_\ell^{n_\ell})$, by
$$
x \mapsto \exp\left( \sum_{i \in \mathcal{L}(I_1)} x_1^i X_i\right) \cdots \exp\left( \sum_{i \in \mathcal{L}(I_N)} x_N^i X_i\right).
$$
In these coordinates, a vector field $X_i$ with $i \in \mathcal{L}(I_\ell)$, $\ell=1,\dots,N$,  depends only on the coordinates $\bar x_\ell$ and can be considered as a vector field on $\R^{n_\ell}$ (with coordinates $\bar x_\ell$). Thus $D_\ell$ can be identified with a distribution on $\R^{n_\ell}$. Let $\bar g_\ell$ be the sub-Riemannian metric on $(\R^{n_\ell},D_\ell)$ for which the vector fields $X_i$, $i \in I_\ell$, form an orthonormal frame. We have the following expressions in coordinates:
$$
\left\{
\begin{array}{l}
\displaystyle  g_1 (x)(\dot x, \dot x) =\sum_{\ell=1}^N  \bar g_\ell(\bar{x}_\ell)(\dot{\bar{x}}_\ell, \dot{\bar{x}}_\ell), \\
\displaystyle  g_2 (x)(\dot x, \dot x) =\sum_{\ell=1}^N \alpha^2_\ell  \: \bar g_\ell(\bar{x}_\ell)(\dot{\bar{x}}_\ell, \dot{\bar{x}}_\ell).
\end{array}
\right.
$$
Hence $g_1, g_2$ form a Levi-Civita pair on $D$ with constant coefficients and the theorem is proved.
\end{proof}

\begin{rk}
\label{re:Carnot_micro}
Note that we use the hypothesis of projective equivalence between $g_1$ and $g_2$ only to deduce the existence of a solution to the fundamental algebraic system. So we have actually proved a stronger result than Theorem~\ref{th:Carnot.prod}, namely: \emph{if $g_1$ and $g_2$ are non proportional and if the corresponding fundamental algebraic system $A \widetilde{\Phi} = b$ admits a solution near some $\lambda \in T^*M$, then $D_\mathbb{G}$ admits a nontrivial product structure and $(g_1, g_2)$ is a Levi-Civita pair with constant coefficients.}
\end{rk}


\section{Nilpotent approximation of equivalent metrics}
\label{se:nilp=LCpairs}
\subsection{Nilpotent approximation}
\label{se:nilp_approx}

Let $(M, D, g)$ be a sub-Riemannian manifold and $q_0 \in M$ be a regular point. The nilpotent approximation of $(M, D, g)$ at $q_0$ is another sub-Riemannian manifold, denoted by $(\hat{M}, \hat{D}, \hat{g})$, which has a particular structure: $\hat{M}$ is a Carnot group, $\hat{D}=D_{\hat{M}}$ is the canonical distribution on $\hat{M}$, and $\hat{g}$ is a left-invariant sub-Riemannian metric on $(\hat{M}, \hat{D})$.

Below we briefly recall the construction of the nilpotent approximation in a form convenient for us here, following the foundational paper \cite{tan1} in nilpotent differential geometry. For equivalent description using privileged coordinates or metric tangent space approach see \cite{bel, jea14}.

Let $V^1=D(q_0)$ and, for an integer $i>1$, $V^i= D^i(q_0)/D^{i-1}(q_0)$. The graded space
$$
\mathfrak{g}=\bigoplus_{i=1}^r V^i
$$
associated with the filtration \eqref{flag} at $q_0$ is endowed with the natural structure of a fundamental graded Lie algebra: if $X\in V^i$ and $Y \in V^j$,  then for any vector fields  $\widetilde X$ and $\widetilde Y$ tangent to $D^i$ and $D^j$ respectively in a neighborhood of $q_0$ and such that
$\widetilde X(q_0)=X$, $\widetilde Y(q_0)=Y$, the vector  $[\widetilde X,\widetilde Y](q_0)$ is well-defined modulo $D^{i+j-1}(q_0)$, i.e.\ $[X, Y]:=[\widetilde X,\widetilde Y](q_0)$ is a well-defined element of $V^{i+j}$. The graded Lie algebra $\mathfrak g$ is called the \emph{Tanaka symbol of the distribution $D$ at $q_0$}. Note that since $D$ generates the weak derived flag \eqref{flag}, the space $V^1$ generates the Lie algebra $\mathfrak g$. Therefore, $\mathfrak g$ is a fundamental graded Lie algebra. As a consequence, the connected simply-connected Lie group $\hat M$ with Lie algebra $\mathfrak g$ is a Carnot group.

Let us denote by $\hat{D}$ the left-invariant distribution on $\hat M$ such that $\hat D (e)=V^1$, where $e$ is the identity of $\hat M$. The metric $g$ on $D$ induces an inner product $g(q_0)$ on $V^1$, and so a left-invariant sub-Riemannian metric $\hat g$ on $(\hat{M}, \hat{D})$. The constructed sub-Riemannian manifold $(\hat M, \hat D, \hat g)$ is called the \emph{nilpotent approximation of $(M, D, g)$ at $q_0$}.

Consider a frame $\{X_1, \dots, X_n\}$ of $TM$ adapted to $D$ at $q_0\in M$ and such that $X_1,\dots, X_m$ are $g$-orthonormal. For every $i \in \{1,\dots,n\}$, $X_i(q_0)$ can be identified by the construction above to an element of $\mathfrak g$, which defines a left-invariant vector field $\hat{X}_i$ on $\hat M$. Then $\hat{X}_1, \dots, \hat{X}_m$ are $\hat g$-orthonormal and $\{\hat{X}_1, \dots, \hat{X}_n\}$ is a frame of $T \hat M$ adapted to $\hat D$ at any point of $\hat M$. The structure coefficients $\hat{c}_{ij}^k$ of this frame satisfy:
\begin{equation}
\label{struct.constant.nilpotent}
\left\{
\begin{array}{ll}
\hat{c}_{ij}^k  \equiv c_{ij}^k(q_0) \ &\hbox{ if } \ w_i + w_j = w_k; \\
\hat{c}_{ij}^k \equiv 0 \ &\hbox{ if } \ w_i + w_j \neq w_k.
\end{array}
\right.
\end{equation}

\subsection{Equivalence for nilpotent approximations} \label{subsec:nilp.eqv}
Let $(M,D,g_1)$ and $(M,D,g_2)$ be two sub-Riemannian manifolds. We fix a point $q_0$ which is stable with respect to $g_1,g_2$ and we denote by  $(\hat{M}, \hat{D}, \hat{g}_i)$, $i = 1, 2$, the nilpotent approximation of $(M, D, g_i)$ at $q_0$.

\begin{Theorem}
\label{th:nilpotent_equiv}
If $g_1, g_2$ are projectively equivalent and not conformal to each other near $q_0$, then $\hat{D}$ admits a product structure and $(\hat{g}_1, \hat{g}_2)$ is a Levi-Civita pair with constant coefficients.
\end{Theorem}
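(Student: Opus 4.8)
The plan is to show that, after passing to the highest weighted degree, the fundamental algebraic system of the pair $(g_1,g_2)$ at $q_0$ becomes the fundamental algebraic system of the pair of nilpotent approximations $(\hat g_1,\hat g_2)$, and then to invoke Theorem~\ref{th:Carnot.prod} (through Remark~\ref{re:Carnot_micro}).

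\emph{Step 1: produce the algebraic system.} Since $q_0$ is stable and $g_1,g_2$ are not conformal near $q_0$, the transition operator has $N\geq 2$ distinct eigenvalues $\alpha_1^2,\dots,\alpha_N^2$ near $q_0$; I fix a frame $\{X_1,\dots,X_n\}$ adapted to $(g_1,g_2)$ and the induced fiber coordinates $u=(u_1,\dots,u_n)$ as in Section~\ref{se:coord_orbdiffeo}. By Proposition~\ref{th:orb.to.proj} applied at an ample covector $\lambda_0\in\pi^{-1}(q_0)$, $\vec h_1$ and $\vec h_2$ are orbitally diffeomorphic near $\lambda_0$; by Proposition~\ref{Aprop} the last $n-m$ coordinates $\widetilde\Phi$ of the orbital diffeomorphism satisfy $A\widetilde\Phi=b$, and by~\eqref{eq:Phipol} each $\Phi_k$ with $k>m$ is a rational function of $u$ (the first $m$ being given by~\eqref{eq:first.m.coord}).

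\emph{Step 2: the weighted grading and passage to the nilpotent system.} I grade the germs of functions on $T^*M$ along $\pi^{-1}(q_0)$ by the weighted degree attached to the flag~\eqref{flag}, assigning weight $-w_i$ to $u_i$ and weight $+w_j$ to a coordinate on $M$ of weight $w_j$ (the grading of the cotangent lift of the dilations of the tangent Carnot group). Three facts drive the argument. First, in~\eqref{eq:h1} each monomial $c^k_{ij}u_iu_k\partial_{u_j}$ has weight $\geq -2$, since $i\leq m$ forces $w_k\leq w_i+w_j$, with equality exactly when $w_k=w_i+w_j$ and the coefficient is frozen at $q_0$; so by~\eqref{struct.constant.nilpotent} the weight-$(-2)$ part of $\vec h_1$ is precisely $\vec{\hat h}_1$, and likewise $\mathcal P,\aa$ have highest parts $\hat{\mathcal P},\hat\aa$ while $\hat\alpha_i:=\alpha_i(q_0)$ are positive constants. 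Second, feeding this into the recursions~\eqref{elem.A}--\eqref{term.of.b.rec.form} one proves by induction on $s$ that $a^s_{j,k}$ has weight $\geq -1-2s+w_k$, $b^s_j$ has weight $\geq -1-2s$, and their highest weighted components are exactly the entries $\hat a^s_{j,k},\hat b^s_j$ of the fundamental algebraic system of $(\hat g_1,\hat g_2)$ attached to the adapted frame $\{\hat X_1,\dots,\hat X_n\}$ of $\hat M$ and the constants $\hat\alpha_i$; the induction closes because at extremal weight only the indices with $w_i+w_l=w_k$, equivalently $w_l=w_k-1$ (since $w_i=1$), contribute, and these are precisely the ones surviving in~\eqref{struct.constant.nilpotent}. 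Third, choosing in~\eqref{eq:Phipol} a maximal minor $\delta$ of $A$ whose highest weighted part is a \emph{nonzero} maximal minor $\det\hat A_{\mathrm{sub}}$ of $\hat A$ — possible since $\hat A$ has a nonzero maximal minor at ample covectors of $\hat g_1$ by Proposition~\ref{th:nonzer.min} and Theorem~\ref{th:Agr} — Cramer's rule shows each $\Phi_k$ has weight $\geq -w_k$ with highest weighted part $\hat\Phi_k=\det\hat A^{(k)}_{\mathrm{sub}}/\det\hat A_{\mathrm{sub}}$. Now take the highest weighted homogeneous component of each scalar identity $\sum_k a^s_{j,k}\Phi_k=b^s_j$: since the identity is exact and all its terms have weight $\geq -1-2s$, this component of the left-hand side is $\sum_k\hat a^s_{j,k}\hat\Phi_k$, so $\hat A\widetilde{\hat\Phi}=\hat b$.

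\emph{Step 3: conclude, and the main obstacle.} The rational function $\widetilde{\hat\Phi}$ solving $\hat A\widetilde{\hat\Phi}=\hat b$ is regular near any ample covector $\hat\lambda$ of $\hat g_1$ with $\det\hat A_{\mathrm{sub}}(\hat\lambda)\neq0$ and $\hat h_1(\hat\lambda)\neq0$, and such $\hat\lambda$ exist since both are generic conditions and ample covectors are dense by Theorem~\ref{th:Agr}. By Proposition~\ref{prop:inverse}, applied to the frame $\{\hat X_1,\dots,\hat X_n\}$ and the constants $\hat\alpha_i$, the associated map $\hat\Phi$ is an orbital diffeomorphism between the extremal flows of $\hat g_1$ and $\hat g_2$. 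Since the eigenvalues of the transition operator of $(\hat g_1,\hat g_2)$ are the constants $\alpha_1^2(q_0),\dots,\alpha_m^2(q_0)$, not all equal because $N(q_0)\geq2$, the metrics $\hat g_1,\hat g_2$ are not proportional; Remark~\ref{re:Carnot_micro} (the version of Theorem~\ref{th:Carnot.prod} requiring only orbital diffeomorphism) then gives that $\hat D$ admits a non-trivial product structure and that $(\hat g_1,\hat g_2)$ is a Levi-Civita pair with constant coefficients, which is the claim (and, with density of stable points, also Theorem~\ref{th:nilpotent_equiv_main}). The delicate point is Step 2: one must set up the weighted grading so that the passage to the highest weighted component is \emph{exact} at every stage — no cancellation of leading terms in the recursions~\eqref{elem.A}--\eqref{term.of.b.rec.form}, in the minors of $A$, or in the Cramer expressions for the $\Phi_k$ — so that the leading system is literally the fundamental algebraic system of the nilpotent pair; the rest is weight bookkeeping on top of Propositions~\ref{th:orb.to.proj},~\ref{Aprop},~\ref{th:nonzer.min},~\ref{prop:inverse} and Theorem~\ref{th:Carnot.prod}.
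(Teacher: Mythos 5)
Your proposal is correct and follows essentially the same route as the paper: the weighted-degree bounds and leading-part identifications of your Step 2 are exactly the content of Lemmas~\ref{le:degA}, \ref{le:degb} and \ref{le:minors}, and the conclusion via Proposition~\ref{prop:inverse}, Remark~\ref{re:non_conformal} and Remark~\ref{re:Carnot_micro} is identical. The only (harmless) deviation is in how the solution of $\hat{A}\widetilde{\hat\Phi}=\hat{b}$ is produced: you take leading weighted parts of the Cramer expressions for $\widetilde\Phi$, which forces you to worry about cancellations in the leading terms of a rational (radical-containing) solution, whereas the paper sidesteps this by noting that all maximal minors of $\bigl(A \ \ \aa b\bigr)$ vanish identically, hence by Lemma~\ref{le:minors} so do those of $\bigl(\hat A \ \ \aa(q_0)\hat b\bigr)$, and then invoking the full rank of $\hat A$ from Proposition~\ref{th:nonzer.min}.
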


To prove this result we need first some technical results.

Let $g_1, g_2$ be two non-trivially projectively equivalent metrics. By Proposition~\ref{th:orb.to.proj}, their Hamiltonian vector fields are orbitally diffeomorphic near any ample covector. We choose a frame $\{X_1, \dots, X_n\}$ adapted to $(g_1,g_2)$ near $q_0$. It induces (see subsection~\ref{se:nilp_approx}) a frame $\{\hat{X}_1, \dots, \hat{X}_n\}$ of $T \hat M$ adapted to $\hat D$ which has by construction the following properties:
$\hat{X}_1, \dots, \hat{X}_m$ is $\hat{g}_1$-orthonormal and $\frac{1}{\alpha_1(q_0)}\hat{X}_1, \dots, \frac{1}{\alpha_m(q_0)}\hat{X}_m$ is $\hat{g}_2$-orthonormal, where $\alpha^2_1(q), \dots, \alpha^2_m(q)$ are the eigenvalues of the transition operator at $q$ between $g_1$ and $g_2$. Note that the transition operator between $\hat{g}_1$ and $\hat{g}_2$ has the same eigenvalues $\alpha^2_1(q_0), \dots, \alpha^2_m(q_0)$ at any point of $\hat M$.

\begin{rk}
\label{re:non_conformal}
The metrics $\hat{g}_1$ and $\hat{g}_2$ are proportional if and only if all $\alpha^2_i(q_0)$'s are equal, i.e.\ if $g_1, g_2$ are conformal to one another near $q_0$ (recall that $q_0$ is stable). The hypothesis of the theorem rules out this possibility.
\end{rk}

Recall that the data of a frame $\{X_1, \dots, X_n\}$ of $TM$ and of eigenvalues $\alpha^2_1, \dots, \alpha^2_m$ allows to construct infinite matrices $A$ and $b$ by the formulas \eqref{eq:A_and_b}--\eqref{term.of.b.rec.form}. Each element of these matrices $A=A(q)(u)$ and $b=b(q)(u)$ is a function of $q$ in a neighborhood of $q_0$ and of $u \in \R^n$. Similarly, denote by $\hat A$ and $\hat b$ the matrices constructed by using $\{\hat{X}_1, \dots, \hat{X}_n\}$ as a frame and $\alpha^2_1(q_0), \dots, \alpha^2_m(q_0)$ as eigenvalues in the formulas \eqref{eq:A_and_b}--\eqref{term.of.b.rec.form}. Each element of  $\hat A= \hat A (\hat q)(u)$ and $\hat{b}=\hat{b}(\hat{q})(u)$ is a function of $\hat{q}$ in $\hat M$ and of $u \in \R^n$. Finally,  the elements of the matrices $A$, $b$, $\hat A$ and $\hat b$ are denoted by $a^s_{j,k}$, $b^s_j$, $\hat a^s_{j,k}$ and $\hat b^s_j$ respectively.
Let us introduce the notion of \emph{weighted degree} $\mathrm{deg}_w$ for a polynomial with $n$ variables. For a monomial $m=u_1^{\beta_1} \cdots u_n^{\beta_n}$, we set $\mathrm{deg}_w(m) = \sum_{i=1}^{n} \beta_i w_i$. Then the weighted degree $\mathrm{deg}_w(P)$ of a polynomial function $P=P(u_1,\dots,u_n)$ is the largest weighted degree of the monomials of $P$. A polynomial is said to be \emph{$w$-homogeneous} if all its monomials are of the same weighted degree.

\begin{Lemma}
\label{le:degA}
For any $s \in \N$, $1 \leq j \leq m$, and $m+1 \leq k \leq n$, there hold:
\begin{itemize}
\item for every $q \in M$ near $q_0$, the element $a^s_{j,k}(q)$ is a polynomial in $u_1,\dots,u_n$ of weighted degree
$$
\mathrm{deg}_w (a^s_{j,k}(q)) \leq 2s - w_k + 1;
$$

\item the function $\hat{a}^s_{j,k}$ does not depend on $\hat q  \in \hat M$ and is a $w$-homogeneous polynomial in $u_1,\dots,u_n$ of weighted degree
	\begin{equation*}
	\mathrm{deg}_w (\hat{a}^s_{j,k}) = 2s - w_k + 1;
	\end{equation*}

\item the homogeneous term of highest weighted degree in $a^s_{j,k}(q_0)$ is $\hat{a}^s_{j,k}$, that is,
\begin{equation*}
	a^s_{j,k}(q_0)(u)= \hat{a}^s_{j,k} (u) + \mathrm{poly}(u_1, \dots, u_n),
\end{equation*}
with $\mathrm{deg}_w(\mathrm{poly}) < \mathrm{deg}_w(\hat{a}^s_{j,k})$.
\end{itemize}
\end{Lemma}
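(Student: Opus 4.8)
The three statements will be proved together by induction on $s\geq 1$, using the recursion \eqref{elem.A} for $a^s_{j,k}$ alongside its nilpotent analogue $\hat{a}^1_{j,k}=\hat{q}_{jk}$, $\hat{a}^{s+1}_{j,k}=\vec{\hat{h}}_1(\hat{a}^s_{j,k})+\sum_{l=m+1}^{n}\hat{a}^s_{j,l}\hat{q}_{lk}$, where $\vec{\hat{h}}_1$ is the Hamiltonian vector field of $\hat{g}_1$. Two structural facts about an adapted frame are used throughout. Since $X_i\in D^{w_i}$ and $[D^a,D^b]\subseteq D^{a+b}$ (an easy induction from the definition of the modules $D^k$ using the Jacobi identity), we get $[X_i,X_j]\in D^{w_i+w_j}$, so that the structure coefficients satisfy $c^k_{ij}(q)=0$ whenever $w_k>w_i+w_j$, for every $q$ near $q_0$; by \eqref{struct.constant.nilpotent} the constants $\hat{c}^k_{ij}$ are the values $c^k_{ij}(q_0)$ at the indices with $w_k=w_i+w_j$, and $0$ otherwise. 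Moreover each lift $Y_i$ (and likewise each $\hat{Y}_i$ on $\hat{M}$) annihilates any function depending only on the fibre coordinates $u$, because $Y_i(u_j)=du_j(Y_i)=0$; thus in $\vec{h}_1$, written as in \eqref{eq:h1}, the dependence on $q$ of the coefficients of $a^s_{j,k}$ can only enter through the first sum $\sum_{i=1}^{m}u_iY_i$.

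Write $[P]_d$ for the $w$-homogeneous component of weighted degree $d$ of a polynomial $P$ in $u$ (applied coefficientwise when $P$ depends also on $q$). The three claims then read: $[a^s_{j,k}(q)]_d=0$ for $d>2s-w_k+1$; the polynomial $\hat{a}^s_{j,k}$ is independent of $\hat{q}$ and equals $[\hat{a}^s_{j,k}]_{2s-w_k+1}$; and $[a^s_{j,k}(q_0)]_{2s-w_k+1}=\hat{a}^s_{j,k}$. The computation rests on three elementary rules. (a) For $i\leq m$, $\mathrm{deg}_w(u_iY_i(P))\leq\mathrm{deg}_w(P)+1$, since $Y_i$ preserves the $u$-degree and $w_i=1$. (b) Each summand $c^{k'}_{ij'}u_iu_{k'}\partial_{u_{j'}}P$ of the second sum in \eqref{eq:h1} has weighted degree at most $\mathrm{deg}_w(P)-w_{j'}+1+w_{k'}\leq\mathrm{deg}_w(P)+2$, with equality only at the indices $w_{k'}=w_i+w_{j'}$, where $c^{k'}_{ij'}(q_0)=\hat{c}^{k'}_{ij'}$ by \eqref{struct.constant.nilpotent}. (c) $q_{lk}=\sum_{i\leq m}c^k_{il}u_i$ has weighted degree at most $1$; it is nonzero only if $w_l\geq w_k-1$; and when $w_l=w_k-1$ it coincides with $\hat{q}_{lk}$, while $\hat{q}_{lk}=0$ otherwise.

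The base case $s=1$ is immediate from $a^1_{j,k}=q_{jk}$, $\hat{a}^1_{j,k}=\hat{q}_{jk}$ and (c). For the inductive step, insert the induction hypotheses into the recursion $a^{s+1}_{j,k}=\vec{h}_1(a^s_{j,k})+\sum_{l}a^s_{j,l}q_{lk}$. The first claim at level $s+1$ follows from $\mathrm{deg}_w(a^s_{j,l})\leq 2s-w_l+1$, rules (a)--(b), and the fact that a nonzero $q_{lk}$ forces $w_l\geq w_k-1$. For the other two claims, note by (a) that the $\sum_iu_iY_i(a^s_{j,k})$ part of $\vec{h}_1$ contributes nothing to the component of $a^{s+1}_{j,k}(q_0)$ of top weighted degree $2s-w_k+3$; by (b) and (c) this component therefore equals $\sum_{w_{k'}=w_i+w_{j'}}\hat{c}^{k'}_{ij'}u_iu_{k'}\partial_{u_{j'}}\bigl([a^s_{j,k}(q_0)]_{2s-w_k+1}\bigr)+\sum_{w_l=w_k-1}[a^s_{j,l}(q_0)]_{2s-w_l+1}\,\hat{q}_{lk}$, which by the induction hypothesis (claims two and three at level $s$) is $\vec{\hat{h}}_1(\hat{a}^s_{j,k})+\sum_{l}\hat{a}^s_{j,l}\hat{q}_{lk}=\hat{a}^{s+1}_{j,k}$; and $\hat{a}^{s+1}_{j,k}$, read off the nilpotent recursion, is visibly $w$-homogeneous of degree $2s-w_k+3=2(s+1)-w_k+1$ and, since $\hat{Y}_i$ kills functions of $u$ alone, independent of $\hat{q}$. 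I expect the only real difficulty to be organizational bookkeeping: one must check that every term of $\vec{h}_1$ and of $q_{lk}$ capable of reaching the top weighted degree carries, at $q_0$, exactly the nilpotent structure constants, all lower-order parts of the coefficient functions and all $Y_i$-derivatives contributing only in strictly smaller weighted degree and hence being harmless.
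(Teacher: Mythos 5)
Your proof is correct and takes essentially the same approach as the paper: the paper's own proof consists precisely of the two observations you isolate (that $c_{ij}^{l}=0$ whenever $w_l>w_i+w_j$, and that $\mathrm{deg}_w(\vec h_1(P))\leq \mathrm{deg}_w(P)+2$) followed by the remark that "an easy induction argument based on \eqref{elem.A}" gives the first item and the same induction combined with \eqref{struct.constant.nilpotent} gives the other two. You have simply written out that induction in full, including the correct identification of which terms can attain the top weighted degree and why they carry the nilpotent structure constants at $q_0$.
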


\begin{proof}
Notice first that a structure coefficient $c_{ij}^l$ is zero if $w_l>w_i + w_j$; and second that, for any polynomial $P$,
\begin{equation*}
\label{+2}
\mathrm{deg}_w\bigl(\vec h_1(P)\bigr) \leq  \mathrm{deg}_w(P)+2.
\end{equation*}
An easy induction argument based on \eqref{elem.A} allows then to prove the first item.
The second and the third item are proven in the same way by using moreover \eqref{struct.constant.nilpotent}.
\end{proof}

\begin{Lemma}
\label{le:degb}
For any $s \in \N$ and $1 \leq j \leq m$, there hold:
\begin{itemize}
\item for every $q \in M$ near $q_0$,  $\aa b^s_j$ is a polynomial in $u_1,\dots,u_n$ of weighted degree
$$
\mathrm{deg}_w (\aa b^s_j) \leq 2s + 1;
$$

\item the function $\aa (q_0) \hat{b}^s_j$ does not depend on $\hat q  \in \hat M$ and is a $w$-homogeneous polynomial in $u_1,\dots,u_n$ of weighted degree
	\begin{equation*}
	\mathrm{deg}_w (\aa (q_0) \hat{b}^s_j) = 2s + 1;
	\end{equation*}

\item the homogeneous term of highest weighted degree in $b^s_j(q_0)$ is $\hat{b}^s_j$, that is,
\begin{equation*}
	\aa b^s_j(q_0)(u)= \aa \hat{b}^s_j (u) + \mathrm{poly}(u_1, \dots, u_n),
\end{equation*}
with $\mathrm{deg}_w(\mathrm{poly}) < \mathrm{deg}_w(\hat{b}^s_j)$.
\end{itemize}
\end{Lemma}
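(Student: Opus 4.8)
The plan is to imitate the induction in the proof of Lemma~\ref{le:degA}, the only new feature being that one must first turn the a priori rational functions $b^s_j$ and $\hat b^s_j$ into genuine polynomials; this is exactly where first divisibility enters. First I would note that, since $g_1,g_2$ are projectively equivalent and non-conformal, Proposition~\ref{th:orb.to.proj} provides an orbital diffeomorphism near any ample covector, so that Proposition~\ref{prop:divty} and Lemma~\ref{le:PQ} apply near $q_0$: $\vec{h}_1(\mathcal{P})=Q\,\mathcal{P}$ with $Q=\sum_{i=1}^m\frac{X_i(\alpha_i^2)}{\alpha_i^2}u_i$ a linear form, whence $\frac{\vec{h}_1(\aa^2)}{\aa^2}=\frac{\vec{h}_1(\mathcal{P})}{\mathcal{P}}=Q$ and, using $\vec{h}_1(h_1)=0$, also $\vec{h}_1(\aa)=\tfrac12 Q\,\aa$. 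Setting $B^s_j:=\aa\,b^s_j$, the recurrence \eqref{term.of.b.rec.form} then rewrites as the \emph{polynomial} recurrence
\[
B^1_j=R_j,\qquad B^{s+1}_j=\vec{h}_1(B^s_j)-\tfrac12 Q\,B^s_j-\sum_{k=m+1}^{n}a^s_{j,k}\sum_{i=1}^m u_i\Bigl(\alpha_i^2 q_{ki}+\tfrac12 X_k(\alpha_i^2)u_i\Bigr),
\]
where, after substituting $Q$ for $\vec{h}_1(\aa^2)/\aa^2$ in \eqref{Rj}, $R_j$ is a genuine polynomial of weighted degree at most $3$. In particular $\aa\,b^s_j$ is a polynomial.

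For the first item I would then induct on $s$: since $\mathrm{deg}_w(\vec{h}_1(P))\le\mathrm{deg}_w(P)+2$, $\mathrm{deg}_w(Q)=1$, $\mathrm{deg}_w(a^s_{j,k})\le 2s-w_k+1$ by Lemma~\ref{le:degA}, and $w_k\ge 2$ for $k>m$, the last two sums in the recurrence have weighted degree at most $2s+1$, so the dominant term is $\vec{h}_1(B^s_j)$ and $\mathrm{deg}_w(B^{s+1}_j)\le 2s+3$; the base case $\mathrm{deg}_w(R_j)\le 3$ is checked term by term in \eqref{Rj}.

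For the nilpotent matrices, I would use that the $\alpha_i(q_0)$ are constants, so $\hat Q=0$, and that first divisibility of $(g_1,g_2)$ forces (Proposition~\ref{prop:from.div}, first bullet) $c^k_{ij}(q_0)=0$ for $k>m$ whenever $\alpha_i(q_0)\neq\alpha_j(q_0)$; hence $\hat c^k_{ij}=0$ in that case by \eqref{struct.constant.nilpotent}, and antisymmetry of the $\hat c^k_{ij}$ gives $\vec{\hat{h}}_1(\hat{\mathcal{P}})=0$, so $\vec{\hat{h}}_1(\hat\aa^2)/\hat\aa^2=0$ and $\vec{\hat{h}}_1(\hat\aa)=0$. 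A weighted-degree count from \eqref{struct.constant.nilpotent} also gives $\hat q_{ki}=0$ for $i\le m<k$. Writing $\hat B^s_j:=\aa(q_0)\,\hat b^s_j$, the recurrence therefore collapses to $\hat B^1_j=\alpha_j^2(q_0)\vec{\hat{h}}_1(u_j)$ and $\hat B^{s+1}_j=\vec{\hat{h}}_1(\hat B^s_j)$, so $\hat B^s_j=\alpha_j^2(q_0)(\vec{\hat{h}}_1)^s(u_j)$ has constant coefficients (independent of $\hat q$), and since \eqref{struct.constant.nilpotent} makes $\vec{\hat{h}}_1$ raise the weighted degree by exactly $2$ on $w$-homogeneous polynomials, $\hat B^s_j$ is $w$-homogeneous of weighted degree $2s+1$; this is the second item. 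The third item follows by the same induction as in Lemma~\ref{le:degA}: its third item gives $a^s_{j,k}(q_0)=\hat a^s_{j,k}+(\text{lower weighted degree})$, the part of $\vec{h}_1$ raising the weighted degree by $2$, evaluated at $q_0$, acts on a polynomial as $\vec{\hat{h}}_1$ acts on its top weighted-degree part, and in the recurrence for $B^{s+1}_j(q_0)$ both $\tfrac12 Q(q_0)B^s_j(q_0)$ and the $a^s_{j,k}(q_0)$-sum have weighted degree $<2s+3$; the base case is that the top weighted-degree part of $R_j(q_0)$ equals $\alpha_j^2(q_0)\vec{\hat{h}}_1(u_j)=\hat B^1_j$.

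The one genuinely non-routine point is $\vec{\hat{h}}_1(\hat{\mathcal{P}})=0$ — equivalently, the mere fact that $\hat b$ is a well-defined \emph{polynomial} matrix: in contrast with the corresponding property of $b$, this is not formal and uses in an essential way the constraint on the eigenvalues $\alpha_i$ imposed at $q_0$ by first divisibility of $(g_1,g_2)$. Everything else is the same weighted-degree bookkeeping as in the proof of Lemma~\ref{le:degA}.
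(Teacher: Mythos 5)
Your proof is correct and follows essentially the same route as the paper's: it reduces to the polynomial recurrence for $\aa b^s_j$ via $\vec h_1(\aa^2)/\aa^2=Q$ and $\aa\,\vec h_1(1/\aa)=-Q/2$, and then runs the same weighted-degree induction as in Lemma~\ref{le:degA}, invoking \eqref{struct.constant.nilpotent} for the second and third items. The only addition is that you spell out the verification that $\vec{\hat h}_1(\hat{\mathcal P})=0$ (so that $\hat b$ is a well-defined polynomial matrix), a point the paper leaves implicit; your derivation of this from Proposition~\ref{prop:from.div} and the antisymmetry of the $\hat c^k_{ij}$ is correct.
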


\begin{proof}
Note first that, by \eqref{eq:h1P} and Lemma~\ref{le:PQ}, $\vec{h}_1 (\aa^2) / \aa^2 = Q$. Thus it is a polynomial function of $u$ of weighted degree 1.  As a consequence, the terms $R_j$, $j=1, \dots, m$, are polynomials of weighted degree 3. Using then the recurrence formula \eqref{term.of.b.rec.form} and the fact that
$$
\aa \vec{h}_1 (\frac{1}{\aa})= - \frac{1}{2} \frac{\vec{h}_1 (\aa^2)}{\aa^2} = - \frac{Q}{2},
$$
an easy induction argument shows the first item.

The second and the third item are proven in the same way by using moreover \eqref{struct.constant.nilpotent}.
\end{proof}

\begin{Lemma}
\label{le:minors}
Assume that a minor of the matrix $\hat{A}$ (resp.\ of the matrix $\begin{pmatrix} \hat{A} & \aa(q_0) \hat b \end{pmatrix}$) is nonzero. Then the corresponding minor - same rows and columns - of $A$ (resp.\ of $\begin{pmatrix} A  & \aa b\end{pmatrix}$) is nonzero as well near $q_0$.
\end{Lemma}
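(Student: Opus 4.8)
The key observation is that, by Lemmas \ref{le:degA} and \ref{le:degb}, each entry of $\hat A$ (resp.\ of $\aa(q_0)\hat b$) is precisely the $w$-homogeneous top-weighted-degree component of the corresponding entry of $A(q_0)$ (resp.\ of $\aa b(q_0)$). Moreover the weighted degrees of these top components are \emph{exactly} $2s - w_k + 1$ for the entry $\hat a^s_{j,k}$ (sitting in layer $s$, column $k$), and $2s+1$ for $\aa(q_0)\hat b^s_j$. The plan is to show that when one expands a fixed $(n-m)\times(n-m)$ minor of $A$ (or of $\begin{pmatrix} A & \aa b\end{pmatrix}$) as a sum of products of entries, one from each selected row and column, the terms of maximal weighted degree in this expansion are precisely those obtained by replacing each entry by its top-degree component, i.e.\ by the corresponding entry of $\hat A$. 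Thus the top-weighted-degree part of a minor of $A(q_0)$ is the corresponding minor of $\hat A$, and similarly with $\begin{pmatrix} A & \aa b\end{pmatrix}$ and $\begin{pmatrix} \hat A & \aa(q_0)\hat b\end{pmatrix}$ (after multiplying through by the appropriate power of $\aa$ in the columns coming from $b$).

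To make this precise I would argue as follows. Fix a choice of $n-m$ rows, say in layers $s_1,\dots,s_{n-m}$ with row-indices $j_1,\dots,j_{n-m}$, and recall that the columns are indexed $k = m+1,\dots,n$ (with one extra column if we work with $\begin{pmatrix} A & \aa b\end{pmatrix}$). A diagonal term of the minor corresponding to a permutation $\rho$ of the columns is a product $\prod_{t=1}^{n-m} a^{s_t}_{j_t,\rho(t)}(q_0)$ whose weighted degree is $\le \sum_t \bigl(2 s_t - w_{\rho(t)} + 1\bigr)$ by Lemma \ref{le:degA}. Since $\sum_t w_{\rho(t)} = \sum_k w_k$ is independent of $\rho$, \emph{every} such diagonal term has weighted degree bounded by the same number $W := \sum_t (2s_t+1) - \sum_k w_k$, with equality for a given $\rho$ only if every factor attains its maximal weighted degree, i.e.\ equals $\hat a^{s_t}_{j_t,\rho(t)}$. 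Hence the weighted-degree-$W$ part of the minor of $A(q_0)$ equals the corresponding minor of $\hat A$ (the lower-degree parts of the entries contribute only to strictly lower total degree). The same bookkeeping, using Lemma \ref{le:degb} for the $b$-column, handles $\begin{pmatrix} A & \aa b\end{pmatrix}$; here one first multiplies the last column by $\aa$, noting this multiplication is by a fixed nonzero function and so does not affect whether a minor vanishes.

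Now the conclusion is immediate: if the relevant minor of $\hat A$ is a nonzero polynomial in $u$, then it is the weighted-degree-$W$ homogeneous part of the corresponding minor of $A(q_0)$, so the latter is a nonzero polynomial in $u$; therefore the corresponding minor of $A(q)$, being a polynomial in $u$ with coefficients smooth in $q$ and equal to a nonzero polynomial at $q=q_0$, is not identically zero for $q$ in a neighbourhood of $q_0$. The argument for $\begin{pmatrix} A & \aa b\end{pmatrix}$ is identical. The main (and only) technical point to watch is the permutation-independence of $\sum_t w_{\rho(t)}$, which is what guarantees that the top-degree parts of \emph{all} diagonal terms survive simultaneously rather than cancelling against lower-degree contributions of other terms — but this is clear since $\rho$ ranges over bijections onto the same fixed set of columns. $\qed$
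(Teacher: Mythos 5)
Your proposal is correct and follows essentially the same route as the paper: expand the minor over permutations, use Lemmas \ref{le:degA} and \ref{le:degb} to bound the weighted degree of each diagonal term by a permutation-independent quantity, and identify the top-weighted-degree homogeneous part of the minor of $A(q_0)$ (resp.\ of $\begin{pmatrix} A & \aa b\end{pmatrix}$ at $q_0$) with the corresponding minor of $\hat A$ (resp.\ of $\begin{pmatrix} \hat{A} & \aa(q_0)\hat b\end{pmatrix}$), concluding by continuity in $q$. Your explicit remark that $\sum_t w_{\rho(t)}$ does not depend on the permutation $\rho$ is exactly the point the paper uses implicitly, so the argument matches.
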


\begin{proof}
An arbitrary $(l \times l)$ minor $m(A)$ of the matrix $A$ has the form
$$
m(A) = \sum_{\sigma \in \mathfrak{S}_l} \mathrm{sgn}(\sigma)\  a^{s_1}_{j_1, k_{\sigma(1)}} \cdots a^{s_l}_{j_l, k_{\sigma(l)}}.
$$
As a consequence of Lemma~\ref{le:degA}, each term in this sum is a polynomial function of $u$ of weighted degree $\leq 2 \sum_i s_i - \sum_i w_{k_i} + l$. Moreover, the homogeneous part of $m(A(q_0))$ of weighted degree $2 \sum_i s_i - \sum_i w_{k_i} + l$ is equal to
$$
m(\hat{A}) = \sum_{\sigma \in \mathfrak{S}_l} \mathrm{sgn}(\sigma)\  \hat{a}^{s_1}_{j_1, k_{\sigma(1)}} \cdots \hat{a}^{s_l}_{j_l, k_{\sigma(l)}}.
$$
Hence, if $m(\hat{A}) \neq 0$, then $m(A(q_0))$ is nonzero and so is $m(A(q))$ for $q$ near $q_0$.

The same argument holds for the minors of $\begin{pmatrix} \hat{A} & \aa(q_0) \hat b \end{pmatrix}$ and $\begin{pmatrix} A  & \aa b\end{pmatrix}$ by using Lemma~\ref{le:degb}.
\end{proof}

\begin{Lemma}
\label{prop:affin.eqv}
The algebraic system $\hat{A} \hat{\Phi}=\hat{b}$ admits a solution $\hat{\Phi}$ near any ample covector in $\pi^{-1}(q_0)$.
\end{Lemma}

\begin{proof}
Since $\vec{h}_1$ and $\vec{h}_2$ are locally orbitally diffeomorphic near an ample covector, there exists an orbital diffeomorphism $\Phi$ between the extremal flows of $(g_1,g_2)$ with coordinates $(\Phi_1, \dots, \Phi_n)$ in the system of coordinates associated with the frame $\{X_1, \dots, X_n\}$. Then from Proposition~\ref{Aprop} $\widetilde{\Phi}=(\Phi_{m+1}, \dots, \Phi_n)$ satisfies $A \widetilde{\Phi} = b$. Introducing the nonzero function $\aa$ defined by \eqref{eq:aa}, this can be rewritten as $A \aa \widetilde{\Phi} - \aa b = 0$, i.e.
$$
\begin{pmatrix} A & \aa b \end{pmatrix} \begin{pmatrix} \aa \widetilde{\Phi} \\ -1 \end{pmatrix} = 0.
$$
Thus $\begin{pmatrix} A & b \end{pmatrix}$ is not of full rank, or equivalently, any maximal minor of the latter matrix is zero.
The contraposition of Lemma~\ref{le:minors} implies that any maximal minor of $\begin{pmatrix} \hat{A} & \aa (q_0)\hat{b} \end{pmatrix}$ is zero as well, thus this matrix is not of full rank.

Any element of $\ker \begin{pmatrix} \hat{A} & \aa (q_0)\hat{b} \end{pmatrix}$ is a function of $u \in \R^n$ with values in $\R^{n-m} \times \R$. Since $\hat A$ is of full rank by Proposition~\ref{th:nonzer.min}, and since $\alpha(q_0)$ is nonzero, there exists $\Psi \in \ker \begin{pmatrix} \hat{A} & \aa (q_0)\hat{b} \end{pmatrix}$ of the form
$\Psi = ( \aa(q_0) \hat{\Phi}, -1)$. In other terms, $\hat{\Phi}$ satisfies $\hat{A} \hat{\Phi}=\hat{b}$.
\end{proof}

\begin{proof}[Proof of Theorem~\ref{th:nilpotent_equiv}]
The metrics $\hat{g}_1$ and $\hat g_2$ are left-invariant metrics on the Carnot group $\hat M$ and by Remark~\ref{re:non_conformal} they are not proportional. Moreover by the lemma above, the fundamental algebraic system associated with  $\hat{g}_1$ and $\hat g_2$ admits a solution. Theorem~\ref{th:nilpotent_equiv} follows then from Remark~\ref{re:Carnot_micro}.
\end{proof}


 \section{Genericity of indecomposable fundamental graded Lie algebras}
\label{sec:gen}
This section is dedicated to the proof of Theorem~\ref{th:generic_distrib}.
From Theorem~\ref{th:nilpotent_equiv}, the existence of projectively equivalent and non conformal metrics implies that the nilpotent approximation of $D$ at generic points admits a product structure. Thus we have to show that, under the hypothesis of the theorem on $(m,n)$, generic nilpotent approximations do not have a product structure.

Remark first that, when $n \geq \frac{m(m+1)}{2}$,
generic distributions are free up to the second step at generic points, i.e.\ $D^2$ is a distribution of rank $\frac{m(m+1)}{2}$ near these points. The nilpotent approximation of such a distribution does not admit a product structure, therefore the statement of Theorem~\ref{th:generic_distrib} holds for these values of $(m,n)$.

Consider now a pair $(m,n)$ such that $2 \leq m < n\leq \frac{m(m+1)}{2}$.
We denote by
$\mathrm{GNLA}(m,n)$ the set of all $n$-dimensional step $2$ graded Lie
algebras generated by the homogeneous component $V_1$ of dimension $m$. Theorem~\ref{th:generic_distrib} results directly from the following proposition.

\begin{Prop}
	\label{allgenTan}
Except the following two cases:
\begin{enumerate}
		\item  $m=n-1$ with even $n$,
		\item $(m,n)=(4,6)$,
	\end{enumerate}
	a generic element of $\mathrm{GNLA}(m,n)$ cannot be represented as a
	direct sum of two graded Lie algebras.
\end{Prop}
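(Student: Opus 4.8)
The plan is to translate the problem into linear algebra over the space of skew-symmetric bracket maps, and to count dimensions. An element of $\mathrm{GNLA}(m,n)$ with fixed $V_1 = \R^m$ and $V_2 = \R^{n-m}$ is determined by a surjective linear map $\mu: \Lambda^2 V_1 \to V_2$; two such maps give isomorphic graded Lie algebras iff they differ by the action of $GL(V_1) \times GL(V_2)$. So $\mathrm{GNLA}(m,n)$ is (an open subset of) the quotient of the space $\mathrm{Epi}(\Lambda^2 V_1, V_2)$ by this group action, and a generic element is one lying in an orbit of maximal dimension, i.e.\ whose stabilizer has minimal dimension. The decomposability condition is a closed (indeed algebraic) condition on $\mu$: the algebra decomposes as a direct sum iff there exist nontrivial complementary subspaces $V_1 = W' \oplus W''$ with $\mu(\Lambda^2 W') \subset$ a complementary pair in $V_2$ and, crucially, $\mu(W' \wedge W'') = 0$. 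The strategy is to show that for all $(m,n)$ outside the two exceptional cases, the set of decomposable $\mu$ has strictly smaller dimension than $\mathrm{Epi}(\Lambda^2 V_1, V_2)$, hence its complement is generic (open and dense, and in fact its image in the quotient is nonempty).

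Concretely, I would stratify the decomposable locus by the type $(m_1,\dots,m_k; n_1,\dots,n_k)$ of the decomposition, with $\sum m_i = m$, $\sum n_i = n-m$, $k \geq 2$, each factor $(m_i,n_i)$ itself a fundamental step-$2$ algebra (so $n_i \leq \binom{m_i+1}{2}$ and $n_i \geq m_i$ when $m_i \geq 2$, while $m_i=1$ forces $n_i=0$ since a $1$-generated step-$2$ algebra on one generator is abelian of dimension $1$ — such factors contribute only to the "Euclidean" part). For a fixed type, the decomposable maps of that type form the image of a map from (choice of the flag/splitting of $V_1$ and $V_2$) $\times$ (product of the strata in each $\mathrm{GNLA}(m_i,n_i)$); its dimension is at most
\[
\dim \mathrm{Gr} + \sum_{i=1}^k \dim \mathrm{Epi}\bigl(\Lambda^2 \R^{m_i}, \R^{n_i}\bigr) - \sum_{i=1}^k \dim\bigl(GL(m_i)\times GL(n_i)\bigr) + \dim\bigl(GL(m)\times GL(n-m)\bigr),
\]
where $\mathrm{Gr}$ is the relevant flag variety; equivalently, comparing orbit-space dimensions, a type-$(m_i,n_i)$ decomposable algebra depends on $\sum_i \dim \mathrm{GNLA}(m_i,n_i)$ parameters, and one must check $\sum_i \dim \mathrm{GNLA}(m_i,n_i) < \dim \mathrm{GNLA}(m,n)$. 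Since $\dim \mathrm{GNLA}(m,n) = \dim \mathrm{Epi}(\Lambda^2\R^m,\R^{n-m}) - \dim(GL(m)\times GL(n-m))$ whenever the generic stabilizer is finite (which holds away from the exceptional cases and which should be verified as a first step, e.g.\ via the fact that the generic such $\mu$ has trivial derivations in degree $0$ preserving the grading), this reduces to an explicit inequality: roughly $m(n-m)\bigl(\tfrac{m+1}{2} \cdot \tfrac{?}{}\bigr)$ — precisely $\dim\mathrm{GNLA}(m,n) = (n-m)\binom{m}{2} - m^2 - (n-m)^2$ when the RHS is nonnegative — which I would check is strictly superadditive over partitions of $(m,n)$ except exactly when $(m,n)=(4,6)$ or $m=n-1$ with $n$ even.

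The main obstacle, and the reason the two exceptional cases appear, is precisely the failure of the generic-stabilizer-is-finite assumption and the failure of the superadditivity inequality at the boundary. When $m=n-1$ with $n$ even, i.e.\ $V_2$ is $1$-dimensional and $m$ odd would be the complementary parity — here a generic $\mu \in \Lambda^2(\R^m)^*$ is a symplectic form when $m$ is even, so $n-m=1$, $m$ even, $n$ even; such a Heisenberg-type algebra is indecomposable, but $m$ odd (so $n$ odd, excluded) versus $m$ even forces the parity condition, and when $m$ is odd the generic $2$-form has a kernel line, forcing a direct-sum splitting off an abelian factor — this is the content of case (1). The case $(4,6)$: here $\Lambda^2\R^4 \cong \R^6$ and $V_2 \cong \R^6$, so $\mu$ generically is an isomorphism; the algebra is the free nilpotent algebra of step $2$ on $4$ generators, but a low-dimensional coincidence (related to $\mathfrak{so}(4) \cong \mathfrak{so}(3)\oplus\mathfrak{so}(3)$ acting on $\Lambda^2 \R^4 = \Lambda^2_+ \oplus \Lambda^2_-$) causes every generic element in a suitable sense to decompose, or more precisely causes the dimension count to fail. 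Verifying these two cases by hand, and confirming that the inequality is strict in all other cases (including a careful treatment of factors with $m_i=1$ and of the "mixed" strata where one factor is the free algebra), is where the real work lies; the rest is bookkeeping with binomial coefficients.
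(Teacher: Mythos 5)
Your reduction of the problem to the $GL(V_1)\times GL(V_2)$-action on bracket maps $\mu:\Lambda^2 V_1\to V_2$ is the same starting point as the paper's (which works dually with the $(n-m)$-dimensional subspace $\Omega_{\mathfrak g}=\mathrm{Im}\,\mathcal L_q^*\subset\wedge^2V_1^*$), but the dimension count you propose does not go through, and the real content of the proposition is left unproved. First, the bound $\dim(\text{decomposable locus})\le\dim(\text{splitting data})+\sum_i\dim\mathrm{Epi}(\Lambda^2\R^{m_i},\R^{n_i})$ is not sufficient: the parametrizing map has positive-dimensional fibers whose dimension depends on the degeneracy of the forms involved, and ignoring them can make the right-hand side as large as, or larger than, $\dim\mathrm{Epi}(\Lambda^2\R^m,\R^{n-m})$ even in non-exceptional cases. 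Already for $n-m=1$ and $m$ even the naive parameter count for splitting off an abelian line gives $2(m-1)+\binom{m-1}{2}\ge\binom{m}{2}$, yet the decomposable locus is the codimension-one hypersurface $\{\mathrm{Pfaffian}=0\}$; the discrepancy is absorbed by fibers whose dimension jumps with the corank of the form. Controlling those fibers amounts to understanding the kernels of the skew forms in $\Omega_{\mathfrak g}$, i.e.\ exactly the normal-form theory you are trying to bypass. Second, in the low-corank cases $n-m\le 2$ --- which is where both exceptions live --- the group $GL(m)\times GL(n-m)$ has dimension exceeding that of $\mathrm{Epi}(\Lambda^2\R^m,\R^{n-m})$, your formula $\dim\mathrm{GNLA}(m,n)=(n-m)\binom{m}{2}-m^2-(n-m)^2$ is negative, the generic stabilizer is far from finite, and the question becomes which orbit is open or dense; no quotient-dimension bookkeeping can decide that. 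The paper resolves it via the Kronecker classification of pencils of skew-symmetric $2$-forms (minimal indices and elementary divisors): corank $1$ is governed by the rank of a single form, corank $2$ by the pencil normal form, where the sign of the discriminant of the binary quadratic form $\mathrm{Pfaffian}(\lambda\omega_1+\mu\omega_2)$ produces the $(4,6)$ exception, and corank $>2$ is reduced to corank $2$ by restricting to generic planes inside $\Omega_{\mathfrak g}$.

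Your descriptions of the two exceptional cases also contain errors that would need fixing under any approach: in case (1), $n$ even with $m=n-1$ forces $m$ \emph{odd}, and the point is that a single generic skew form on an odd-dimensional space has a kernel line, splitting off an abelian factor (your paragraph asserts $m$ even); in case $(4,6)$ one has $V_2\cong\R^{2}$, not $\R^{6}$, the algebra is not the free step-$2$ algebra on $4$ generators (that would be $(4,10)$), and the decomposition comes from the two degenerate forms in a generic pencil on $\R^4$, whose kernels furnish the canonical splitting $V_1=V_1^1\oplus V_1^2$.
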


\begin{proof}
	Let $\mathfrak g= V_1\oplus V_2$ be a step $2$ graded Lie algebra. This
	algebra can be described  by the \emph{Levi operator} $$\mathcal L_q:\wedge^2
	V_1\rightarrow V_2,$$ which sends $(X, Y)\in \wedge^2 V_1$ to $[X,Y]$  or,
	equivalently, by the dual operator $\mathcal L_q^*: V_2^*\rightarrow \wedge^2
	V_1^*$.
	Denote by $\Omega_\mathfrak g$ the image of the latter operator.
	
	Since $\mathfrak{g}$ is generated by $V_1$, the space $\Omega_\mathfrak g$ is a
	$(n-m)$-dimensional subspace of the space $\wedge^2 V_1^*$  of all skew-symmetric
	forms on $V_1$.
	The set  $\mathrm{GNLA}(m,n)$ is
	in a bijective correspondence with the orbits of  $(n-m)$-dimensional  subspace
	of $\wedge^2 V_1^*$ under the natural action of $GL(V_1)$.
	This reduces our question to an analysis of orbits in Grassmannians of $\wedge^2
	V_1^*$ under the natural action of $GL(V_1)$.

Given a subspace $W$ of $V_1$ denote by $A_W$ the space of all skew-symmetric forms with kernel $W$.
A graded Lie algebra $\mathfrak g=V_1\oplus V_2$ is a direct sum of two graded
	Lie algebras if and only if there is a splitting
	\begin{equation}
	\label{split1}
	V_1=V_1^1\oplus V_1^2
	\end{equation}
	(with each summand being nonzero)  such that the corresponding subspace
	of $\wedge^2 V_1^*$  can be represented as
	\begin{equation}
	\label{split2}
	\Omega_{\mathfrak g}=\Omega_{\mathfrak g}^1\oplus \Omega_{\mathfrak g}^2, \qquad \Omega_{\mathfrak{g}}^1\subset A_{V_1^2}, \quad  \Omega_{\mathfrak{g}}^2 \subset A_{V_1^1}.
	\end{equation}
	In this case we will say that the
	space $\Omega_{\mathfrak{g}}$ is \emph{decomposable} with respect to the splitting \eqref{split1}.
The condition on $\Omega_{\mathfrak{g}}^1$ and $\Omega_{\mathfrak{g}}^2$ in \eqref{split2} is equivalent to require that, in some basis of $V_1$, the elements of $\Omega_{\mathfrak{g}}^1$ have the
	block-diagonal matrix representation
	$ \left(
	\begin{array}{cc}
	A_1 &\multicolumn{1}{|c}{0}\\ \hline
	0 & \multicolumn{1}{|c}{0}
	\end{array}\right)$
	and the elements of $\Omega_{\mathfrak{g}}^2$ have the block-diagonal matrix representation
	$ \left(
	\begin{array}{cc}
	0 &\multicolumn{1}{|c}{0}\\ \hline
	0 & \multicolumn{1}{|c}{A_2}
	\end{array}\right)$,
	where the corresponding blocks have the same nonzero size. Note that we do
	not exclude that one of the subspaces $\Omega_{\mathfrak{g}}^i$ is equal to zero. In this case the space $\Omega_{\mathfrak{g}}$ itself must consist of forms with a common nontrivial kernel.
	This corresponds to the situation where one of the summands in the decomposition of $\mathfrak{g}$ into a direct
	sum is commutative.

We will distinguish several cases, depending on the value of the corank $n-m$ and on the parity of $m$. \smallskip

\noindent	{\bf 1. The case $n-m=1$.}
	In this case the space $\Omega_{\mathfrak{g}}$ is
	a line in the space $\wedge^2 V_1^*$. It is decomposable if and only if $\Omega_{\mathfrak{g}}$ is generated by a degenerate skew-symmetric form, one of the subspaces $\Omega_{\mathfrak{g}}^i$ being zero. The latter condition is satisfied by a generic line in $\wedge^2 V_1^*$ if and only if $\dim V_1$ is odd, or equivalently, when $n$ is even.\smallskip

\noindent	{\bf 2. The case $n-m=2$.}
	In this case  $\Omega_{\mathfrak{g}}$  is
	a plane in the space $\wedge^2 V_1^*$. The orbits of planes
	of $\wedge^2 V_1^*$ under the natural action of $GL(V_1)$ are in bijective correspondence with the
	equivalence
	classes of  \emph{pencils of skew-symmetric 2-forms}, which are linear combinations $\lambda A + \mu B$ of two skew-symmetric 2-forms $A, B$ with real parameters $\lambda, \mu$. The classification of these pencils is
	classical, we give here some basic definitions and results and we refer the reader to \cite{gauger} (based on \cite{gantmacher}) for more details.
	
	Let us consider a pencil of skew-symmetric 2-forms $\lambda A + \mu B$, identified to a pencil of skew-symmetric matrices in some basis of the space $V_1$. The pencil is called \textit{regular} if its determinant is a non-zero polynomial, it is called \textit{singular} otherwise.  A regular pencil is characterized by  its \textit{elementary divisors}, defined as follows. Consider the greatest common divisor of all rank-$k$ minors of the pencil for the integers $k$ for which it makes sense. The elementary divisors of the pencil are the simple factors (with their multiplicity) of these greatest common divisors for all possible $k$. In case of skew-symmetric pencils, all elementary divisors come in pairs. A singular pencil is characterized by its elementary divisors and its \textit{minimal indices} (also called {Kronecker indices} in \cite{gauger}). The special property of a singular pencil is that there exists a nonzero homogeneous polynomial branch of kernels $\lambda, \mu \mapsto v(\lambda, \mu)$, i.e.\ for any $\lambda, \mu \in \mathbb R$, the vector $v(\lambda, \mu)$ is a nonzero element of  $\ker (\lambda A + \mu B)$. The \textit{first minimal index}  is the minimal possible degree of a polynomial $v(\lambda, \mu)$. We do not need the other indices here, so we do not define them.

The pencils defined in different basis of $V_1$ and associated to different elements of the same $GL(V_1)$-orbit of a skew-symmetric form are called equivalent. The following result give the normal forms of skew-symmetric pencils.
	
	\begin{Theorem}[\cite{gauger}]
\label{th:norm.form}
		A skew-symmetric pencil $\lambda A + \mu B$ with minimal indices $m_1 \leq m_2 \leq \cdots \leq m_p$ and elementary divisors $(\mu + a_1 \lambda)^{l_1}, (\mu + a_1 \lambda)^{l_1}, \dots, (\mu + a_q \lambda)^{l_q}, (\mu + a_q \lambda)^{l_q}, (\lambda)^{f_1}, (\lambda)^{f_1}, \dots, (\lambda)^{f_s}, (\lambda)^{f_s}$ is equivalent to the skew-symmetric pencil $Q$ of the following form,
		\begin{equation}
		Q =  \left(
		\begin{array}{cc}
		\mathcal{M} &\multicolumn{1}{|c}{0}\\ \hline
		0 & \multicolumn{1}{|c}{\mathcal{F}}
		\end{array}\right) ,
		\end{equation}
where the singular part $\mathcal{M}$ and the regular part $\mathcal{F}$ satisfy
		\begin{equation}\mathcal{M}=
		\begin{pmatrix}
		M_{1} & & \\
		& \ddots & \\
		& & M_{q}
		\end{pmatrix},
		\qquad
		\mathcal{F} =
		\begin{pmatrix}
		E_{1}(a_1) & & & & &\\
		& \ddots & & & &\\
		& & E_{q}(a_q) & & &\\
		& & & F_1 & & \\
		& & & & \ddots  & \\
		& & & & & F_s
		\end{pmatrix} ,
		\end{equation}
all blocks $M_i, E_i, F_i$ being skew-symmetric, $M_i$ of size $(2m_i+1)\times(2m_i+1)$, $E_i$ of  size $2l_i\times2l_i$ and $F_i$ of  size $2f_i\times 2f_i$.
\end{Theorem}

\begin{rk}
Note that the only possible zero blocks are the blocks $E_i(a_i)$ with $\frac{\mu}{\lambda} = - a_i$ and $l_i = 1$, and $F_j$ with $\lambda = 0$ and $f_j = 1$.
\end{rk}

Let us return to the plane $\Omega_g$ considered as a pencil. 	The cases of odd-dimensional and even-dimensional $V_1$ are treated again separately.\smallskip
	
\noindent	{\bf 2(a) The subcase when $\dim V_1$ is odd, $\dim V_1=2k+1$.}
	In this case all forms in the pencil $\Omega_g$ are degenerate, so the pencil is singular. From the dimension of the blocks in the normal form, we see that the first minimal index is not greater than $k$. Moreover, for generic pencils this first minimal index has its maximal possible value, thus it is equal to $k$.
	
	On the other hand, if the pencil $\Omega_{\mathfrak g}$ is decomposable, then its first minimal index must be equal to zero, i.e.\ all forms of the pencil have a common nontrivial kernel. Indeed, assume that  $\Omega_{\mathfrak g}$ is decomposable with respect to the splitting \eqref{split1} with decomposition \eqref{split2}.
	The statement is clear if one of the spaces $\Omega_{\mathfrak g}^i$ in \eqref{split2}  is zero. The remaining possibility is that the spaces $\Omega_{\mathfrak{g}}^i$ are both  one-dimensional. Without loss of generality we assume that $V_1^1$ is odd-dimensional. Then all forms on the line  $\Omega_{\mathfrak{g}}^1$ have a nontrivial kernel in $V_1^1$ and this kernel is common for all forms in $\Omega_{\mathfrak g}$.
	
	Since $k>0$ ($\dim V_1 \geq 2$), we conclude that generic pencils are not decomposable.\smallskip
	
\noindent	{\bf 2(b) The subcase when $\dim V_1$ is even.}
	In this case generic pencils are regular. Generic regular pencils of skew-symmetric forms have only simple elementary divisors, i.e.\ linear and not nontrivial powers of linear, such that each divisor appear only twice.
	
	Now consider a decomposable regular pencil  $\Omega_{\mathfrak g}$ with respect to the splitting \eqref{split1} with decomposition \eqref{split2}. Then	
\begin{equation}
\label{split3}
\Omega_g=\{\lambda  \omega_1+\mu  \omega_2: \lambda, \mu\in \mathbb R\},
\end{equation}
where the form $\omega_i$ generates $\Omega_{\mathfrak g}^i$, $i=1,2$. One can see by the normal form in Theorem \ref{th:norm.form} that the elementary divisors of this pencil can be only of the form $\lambda$ or $\mu$ of multiplicity one. The pencil satisfy the genericity property of the previous paragraph if and only if the set of elementary divisors is $\{\lambda, \lambda, \mu, \mu\}$, i.e.\ when $m=4$. Consequently $n=6$. So, decomposibility on an open set can occur only if  $(m,n)=(4,6)$.
Conversely, if $(m,n)=(4,6)$ and $\Omega _g$ is as in \eqref{split3}, then ${\rm Pfaffian}\bigl(\lambda  \omega_1+\mu  \omega_2\bigr)$ is a quadratic form in $\lambda$ and $\mu$ and the pencil $\Omega_g$ is decomposible if and only if this form is sign-indefinite, which implies that decomposibility occurs on an open set in this case.
\smallskip

\begin{rk}
\label{kerrem}
In the case $m=4$, if the pencil $\Omega_g=\{\lambda  \omega_1+\mu  \omega_2 : \lambda, \mu \in \mathbb R \}$ is decomposable, then  the subspaces $V_1^1$ and $V_{1}^2$ in the splitting \eqref{split1}  are uniquely defined. Indeed,  in this case there are exactly two degenerate forms: these are the lines on which ${\rm Pfaffian}(\lambda  \omega_1+\mu  \omega_2)=0$ and the subspaces  $V_i^j$ are kernels of these forms. We will call this splitting $V_1=V_1^1\oplus V_{1}^2$ the canonical splitting corresponding to the decomposable pencil $\Omega_g$.
\end{rk}
	
\noindent	{\bf 3. The case $n-m>2$.}
	We will reduce this case to the case $n-m=2$.\smallskip
	
\noindent	{\bf 3(a) The subcase when $\dim V_1$ is odd, i.e.\ $\dim V_1 = 2k+1$}.
	Assume that $\Omega_{\mathfrak g}$ is decomposable with respect to the splitting \eqref{split1} and, without loss of generality, that $\dim V_1^1$ is odd and  equal to $2l+1$, $l<k$. Then it is easy to see on the normal form that the first minimal index of any plane in $\Omega_{\mathfrak g}$ is not greater than $l$. On the other hand, a generic plane in a generic $(n-m)$-dimensional subspace of $\wedge^2 V_1^*$ has first minimal index $k$. This proves the statement of the theorem in this case.\smallskip

\noindent	{\bf 3(b) The subcase when $\dim V_1$ is even.}
First, assume that $m=\dim V_1>4$. Then by item 2(b) a generic $(n-m)$-dimensional subspace of $\wedge ^2 V_1$ contains an indecomposable plane, therefore the original $(n-m)$-dimensional subspace is indecomposable.

Now assume that $m=\dim V_1=4$. Then a generic $(n-m)$-dimensional subspace of $\wedge ^2 V_1$ either contains an indecomposable plane or contains two planes such that the canonical decomposition of $V_1$ corresponding to these planes, as defined in Remark \ref{kerrem},  do not coincide.  This implies that generic $(n-m)$-dimensional subspaces of $\wedge^2 V_1$ are indecomposable. This case ends the proof.
\end{proof}

\appendix


\section{Proof of Proposition~\ref{le:LCpairs} on projective equivalence of Levi-Civita pairs}
\label{se:proof_LCpairs}
Let $(g_1,g_2)$ be a Levi-Civita pair on a distribution $D$ of a manifold $M$,
and  fix a point $q_0 \in M$. In local coordinates, the metrics $g_1, g_2$ take the form~\eqref{met1} and the distribution $D$ is the product distribution $D=D_1 \times \cdots \times D_N$ on $\R^n= \R^{n_1}\times \cdots \times \R^{n_N}$ defined by~\eqref{eq:product_distrib}.
	
	Let us construct a frame adapted to $(g_1, g_2)$. For any integer $1\leq \ell \leq N$, we choose vector fields $Y_1^\ell, \dots, Y^\ell_{k_\ell}$, where $k_\ell = \dim D_\ell$, of the form $Y^\ell_i = \sum_{j=1}^{n_\ell} a^\ell_{ij}(x^\ell)\partial_{x_j^\ell}$ such that $\{ Y_1^\ell, \dots, Y^\ell_{k_\ell}\}$ is a frame of $D_\ell$ and is orthonormal with respect to $\bar{g}_\ell$.
	We complete $\{ Y_1^\ell, \dots, Y^\ell_{k_\ell}\}$ into a frame adapted to the flag $D_\ell \subset D_\ell^2 \subset \dots \subset T\R^{n_\ell}$ by adding vector fields $ X_{k_\ell+1}^\ell, \dots, X^\ell_{n_\ell}$ of the form $[Y^\ell_{i_1}, \dots , [Y_{i_{k-1}}^\ell, Y^\ell_{i_k}]]$. Moreover, setting $X^\ell_i = \frac{1}{\sqrt{\gamma_\ell}}Y^\ell_i$ for $i=1, \dots, k_\ell$, we obtain a $g_1$-orthonormal frame $\{ X_1^\ell, \dots, X^\ell_{k_\ell}\}$ of $D_\ell$.
	
	Grouping all together, we have obtained a frame $\{X^\ell_i, \ 1\leq \ell \leq N, \ i=1, \dots, k_\ell\}$ of $D$ which is $g_1$-orthonormal and $g_2$-orthogonal, and a frame $\{X^\ell_i, \ 1\leq \ell \leq N, \ i=1, \dots, n_\ell\}$ of $T\R^n$ which is adapted to the pair $(g_1, g_2)$. To simplify the notations we denote by $\{X_1,\dots,X_m\}$ the frame of $D$ and by $\{X_1,\dots,X_n\}$ the frame of $T\R^n$. For $i=1,\dots,n$, we denote by $\ell(i)$ the integer in $\{ 1, \dots, N\}$ such that $X_i$ is of the form $X^{\ell(i)}_j$.
	
	The special form of the constructed adapted frame and the form of \eqref{met2} imply the following properties of the structure coefficients $c^k_{ij}$:
	\begin{itemize}
		\item if $\ell(i) \neq \ell(j)$, then $c_{ij}^k=0$ if $k \neq i$ or $j$; moreover,
\begin{equation}
\label{eq:cijk_LC}
c^j_{ij} =\left\{
		\begin{array}{ll} 
		\frac{\alpha^2_{\ell(j)} X_i(\alpha^2_{\ell(i)}) }{4 \alpha_{\ell(i)}^2 \left( \alpha_{\ell(j)}^2 - \alpha_{\ell(i)}^2 \right)}, & \hbox{if } j\leq m; \\[3mm]
		0, & \hbox{if } j>m;
		\end{array}
		\right.
\end{equation}
		
\item if $\ell(i)= \ell(j) \leq \ell(k)$, then $c_{ij}^k=0$.
\end{itemize}
	Notice also that we can obtain the following relationship from~\eqref{met2},
\begin{equation}
\label{eq:X_ialpha_LC}
		X_i(\alpha_{\ell(j)}^2) = \frac{\alpha^2_{\ell(j)}}{2 \alpha_{\ell(i)}^2} X_i(\alpha_{\ell(i)}^2), \qquad \text{if } {\ell(i)} \neq {\ell(j)}.
\end{equation}
	
	These formulas permit us to simplify the equations \eqref{eq:220} and \eqref{eq:221} which characterize an orbital diffeomorphism. To simplify \eqref{eq:220}, we have to compute $R_j$. For this, we first show that the first divisibility condition holds for our choice of adapted frame (it results directly from the use of~\eqref{eq:cijk_LC} and \eqref{eq:X_ialpha_LC} in the computation of $\vec{h}_1(\mathcal{P})$). Then we use the following formula (see \cite[Lemma\ 3]{Z}),
	\begin{equation*}
		\begin{split}
			R_j  = & \sum_{i=1}^{m} (1-\delta_{ij})\left((\alpha_j^2 - \alpha_i^2)c^i_{ji} - \frac{X_j(\alpha_i^2)}{2}\right)u^2_i +  \sum_{i=1}^{m}(1-\delta_{ij})\frac{\alpha_i^2}{2 \alpha_j^2}X_i\left(\frac{\alpha_j^4}{\alpha_i^2}\right)u_i u_j\\
			&+ \sum_{i=1}^{m} \sum_{k=1}^{m} (1-\delta_{ik})(\alpha_j^2 - \alpha_k^2)c^k_{ji}u_i u_k + \alpha_j^2 \sum_{i=1}^{m}\sum_{k=m+1}^{n}c^k_{ji}u_i u_k.
		\end{split}
	\end{equation*}
	We substitute the structure coefficients by the expressions shown above and use the property of functions $\beta_\ell(\bar x_\ell)$ to be constant if $x_\ell$ is of dimension more then one. We get
	$R_j = \alpha_j^2 \sum_{i=1}^{m} \sum_{k = m+1}^{n} c^k_{ij} u_i u_k.$
	We finally obtain a simplified form of \eqref{eq:220},
	\begin{equation} \label{(1.a)}
		\sum_{k=m+1}^{n}q_{jk}\Phi_k = \frac{\alpha_j^2}{\aa} \sum_{k=m+1}^{n} q_{jk} u_k \qquad 1 \leq j \leq m.
	\end{equation}
	To simplify  \eqref{eq:221}, it is sufficient to notice that $X^s_i(\alpha^2_i) = 0$ if  $\left| I_s \right| > 1$. Setting $\Phi_i = \frac{\alpha_i^2 u_i}{\aa}$ for $i = 1 , \dots, m$ as in \eqref{eq:first.m.coord}, we obtain
$$
 \vec{h}_1 (\Phi_s) = \sum_{k=1}^{n} q_{sk} \Phi_k.
 $$

To summarize, there exists an orbital diffeomorphism between $\vec{h}_1$ and $\vec{h}_2$ if the following equations have a solution:
	\begin{equation}
		\sum_{k=m+1}^{n}q_{jk}\Phi_k = \frac{\alpha_j^2}{\aa} \sum_{k=m+1}^{n} q_{jk} u_k \qquad 1 \leq j \leq m,
	\end{equation}
	\begin{equation} \label{(2.a)}
		\vec{h}_1(\Phi_s) = \sum_{k=1}^{n}q_{sk} \Phi_k  \qquad m+1 \leq s \leq n.
	\end{equation}	
It appears that $\Phi_k = \frac{\alpha_k^2u_k}{\aa}$, $k= m+1, \dots, n$, obviously satisfy this system. Thus $\vec{h}_1$ and $\vec{h}_2$ are orbitally diffeomorphic and, by Proposition \ref{th:proj.to.orb}, $g_1, g_2$ are projectively equivalent.
	
In the case of a pair with constant coefficients, all $\alpha_i$ are constant and thus $\vec{h}_1(\alpha^2) = 0$. Applying again Proposition~\ref{th:proj.to.orb}, we deduce that the metrics of a Levi-Civita pair with constant coefficients are affinely equivalent. Conversely, if the metrics of a Levi-Civita pair are affinely equivalent, then by Proposition~\ref{le:constant_alphai} all factors $\alpha_i$ are constant, which implies that all $\beta_i$ are constant. Thus the pair has constant coefficients.
 This ends the proof of Proposition~\ref{le:LCpairs}.


\section{Proof of Proposition~\ref{le:quadratic} on quadratic first-integrals}
\label{se:proof_quadratic}

Proposition~\ref{le:quadratic} is the generalization to sub-Riemannian metrics of a result stated for Riemannian metrics in \cite{Kruglikov-Matveev2015}, namely Corollary 3 of Theorem 1. It is then sufficient to show the following result, which is the exact generalization to the sub-Riemannian case of that Theorem 1 (in the case of polynomials of degree $d=2$).

\begin{Prop}
\label{le:KM}
Let $D$ be a Lie-bracket generating distribution on an open ball $B \subset \R^n$ and $g$ be a smooth metric on $D$. Then, for any $\eps >0$ there exists a metric $\widetilde{g}$ on $D$ which is $\eps$-close to $g$ in the $C^\infty$-topology, and $\eps'>0$ such that for any $C^2$ metric $g'$ on $D$ which is $\eps'$-close to $\widetilde{g}$ in the $C^2$-topology, the normal extremal flow of $g'$ does not admit a non-trivial quadratic first-integral (non-trivial means non proportional to the Hamiltonian $h_{g'}$ associated with $g'$).
\end{Prop}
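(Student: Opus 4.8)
The plan is to follow the proof of Theorem~1 of \cite{Kruglikov-Matveev2015} step by step, replacing at each stage the Levi--Civita connection of the Riemannian metric by the structural data of the bracket generating distribution, and checking that the bracket generating hypothesis is precisely what makes the argument go through.

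First I would translate the problem into a linear overdetermined PDE system. Fix a point $q_0\in B$, a frame $\{X_1,\dots,X_n\}$ adapted to $D$ with $X_1,\dots,X_m$ being $g$-orthonormal, and the induced fibre coordinates $(u_1,\dots,u_n)$ on $T^*B$, so that $h_g=\tfrac12\sum_{i=1}^m u_i^2$ and $\vec h_g$ has the form \eqref{eq:vech}. A fibrewise quadratic function on $T^*B$ is $K=\sum_{1\le i\le j\le n}K^{ij}(q)u_iu_j$, and the condition $\{h_g,K\}=\vec h_g(K)=0$ becomes, after collecting the coefficient of each cubic monomial in $u$, a first order linear system $\mathcal D_g K=0$ on the $\binom{n+1}{2}$ unknown functions $K^{ij}$, whose coefficients depend polynomially on the structure coefficients $c^k_{ij}$ of the adapted frame (hence on $g$ together with one derivative of the frame), exactly as the Killing tensor equation in the Riemannian case. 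Its solution space always contains the line $\{c\,h_g:c\in\R\}$; the claim to be proved is that for generic $g$ it contains nothing else.

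The second step is to show that $\mathcal D_g$ is of finite type, as in \cite{Kruglikov-Matveev2015}: there are integers $N=N(n)$ and $D_0=D_0(n)$ such that every solution $K$ on a connected open set is determined by its $N$-jet at one point and the solution space has dimension $\le D_0$. Where \cite{Kruglikov-Matveev2015} uses non-degeneracy of $g$, here one uses that $D$ is bracket generating: the successive prolongations of the symbol of $\mathcal D_g$ involve repeated application of $\vec h_g$ and of its brackets with the lifts $Y_i$, and by Lemma~\ref{le:jacobi} together with bracket generation these recover all the missing fibre directions after finitely many steps — this is the same mechanism that makes generic geodesics ample, Theorem~\ref{th:Agr}. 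Consequently, whether $g$ admits a nontrivial quadratic integral near $q$ is detected by the rank at $q$ of a matrix $M_g(q)$ assembled polynomially from the coefficients of the prolonged system, i.e.\ from a fixed-order jet of $g$ at $q$: a nontrivial integral exists near $q$ precisely when $\operatorname{rank}M_g(q)$ fails to be maximal, "maximal" being the value forced by the line of trivial solutions. This is a closed semialgebraic condition on the relevant jet of $g$. The genericity argument is then purely formal and identical to \cite[\S3]{Kruglikov-Matveev2015}: let $\Sigma$ be the set of jets, in the appropriate jet bundle $J^{p}(B,\mathrm{Sym}^2 D^*)$, at which $M$ is not of maximal rank; then $\Sigma$ is a semialgebraic subset, and it is a \emph{proper} subset as soon as one exhibits a single bracket generating sub-Riemannian metric on $\R^n$ having no formal quadratic integral other than multiples of its Hamiltonian — this is the analogue in the sub-Riemannian setting of the generic-curvature example used in \cite{Kruglikov-Matveev2015}. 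Granted properness, $\Sigma$ is nowhere dense, so by Thom's jet transversality theorem the set of metrics whose $p$-jet misses $\Sigma$ over all of $B$ is residual, hence dense; and for such $\widetilde g$ the rank condition persists for every $g'$ in a suitable $C^2$-neighbourhood, the reduction of the rank condition to one involving only low-order jets of $g$ being carried out exactly as in \cite{Kruglikov-Matveev2015}. This proves Proposition~\ref{le:KM}, and with it Proposition~\ref{le:quadratic}.

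The main obstacles I expect are two. First, verifying carefully that the prolongation of $\mathcal D_g$ terminates (finite type) from bracket generation alone: unlike the Riemannian case the principal symbol of $\vec h_g$ is degenerate on the fibre, so one must track how the missing fibre directions are recovered through iterated brackets, and this is where the adapted frame and Lemma~\ref{le:jacobi} do the work previously done by the metric being non-degenerate. Second, producing the explicit example showing $\Sigma\neq J^{p}$; in \cite{Kruglikov-Matveev2015} this rests on a curvature computation which has to be replaced by a computation adapted to sub-Riemannian structures. Everything else — the jet transversality step and the passage to the $C^2$-topology — transfers essentially verbatim from \cite{Kruglikov-Matveev2015}.
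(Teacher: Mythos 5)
Your outline is not the argument of \cite{Kruglikov-Matveev2015}, and it is not the one the paper uses. The proof of \cite[Theorem~1]{Kruglikov-Matveev2015} -- and its adaptation here -- is a perturbation argument on finitely many geodesics: one chooses $k>4$ collections of $N=n(n+1)/2$ points, uses that a fibrewise quadratic function is determined by its values on a $2$-decisive set of covectors, and destroys the resulting compatibility constraints by perturbations of the metric localized along the geodesics joining the points. In the sub-Riemannian setting the only genuinely new ingredients are: (i) the submersion property of the map $\widetilde g\mapsto e^{\vec h_{\widetilde g}}(\lambda_0)$ at an \emph{ample} covector $\lambda_0$ (Lemma~\ref{le:subm}, proved via the extensions $J^{(1)}_{\lambda(s)}$ of the Jacobi curve and the very definition of ampleness), and (ii) the construction of point sets satisfying the three hypotheses, which uses density of ample covectors (Theorem~\ref{th:Agr}) and local openness of the exponential map near ample covectors. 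You have instead proposed a prolongation/finite-type/jet-transversality scheme, which is a different route, and as it stands it has genuine gaps.

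Concretely: (a) the finite-type property of the sub-Riemannian ``Killing equation'' is asserted, not proved. Collecting cubic coefficients of $\vec h_g(K)=0$ only produces the \emph{horizontal} derivatives $X_iK^{jk}$, $i\le m$, of the unknowns, and showing that iterated prolongation recovers the missing directions and terminates at a uniform order is itself a substantial claim with no proof in the literature for general bracket generating $D$; Lemma~\ref{le:jacobi} concerns the Jacobi curve of a single extremal and does not do this work for you. (b) Properness of your bad set $\Sigma$ requires exhibiting one metric on the given $(B,D)$ with no nontrivial quadratic integral -- essentially the statement to be proved -- and no such example is supplied. (c) Most seriously, your obstruction is a rank condition on a jet of $g$ of the (unknown) prolongation order $p$, so it is stable only under $C^p$-perturbations, whereas the proposition requires stability under $C^2$-perturbations of $\widetilde g$. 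Your claim that the reduction to low-order jets is ``carried out exactly as in \cite{Kruglikov-Matveev2015}'' has no referent: they obtain $C^2$-stability precisely because their obstruction is phrased through endpoint maps of finitely many geodesics and values of the putative integral at finitely many covectors, not through a jet-bundle rank condition. As written, the proposal does not yield the statement.
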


Note that we work on an open subset of $\R^n$ and not in a general manifold since, as noticed in \cite{Kruglikov-Matveev2015}, it is sufficient to prove the result locally. Thus we identify $T^*B$ to $B \times \R^n$ and we write a covector $\lambda \in T^*B$ as a pair $(x,p)$, where $x=\pi(\lambda)$.

The proof of Theorem 1 in \cite{Kruglikov-Matveev2015} goes as follows.
Choose $k$ sets of $N$ points\footnote{In \cite{Kruglikov-Matveev2015}, the sets of points are labelled $A=\{A_1, \dots, A_N\}$, $B_\ell=\{B_{\ell,1}, \dots, B_{\ell,N}\}$, $\ell=1, \dots, \kappa$, $C=\{C_1, \dots, C_N\}$, with $\kappa=k-2$ greater than $2$.} in $B$, $S_\ell=\{x_{\ell,1}, \dots, x_{\ell,N}\}$, $\ell=1, \dots, k$, where $N=n(n+1)/2$ and $k$ is an integer larger than $4$. Then consider the initial covectors associated with all the geodesics joining the points in different sets.
The existence of a quadratic first-integral implies strong constraints on these covectors. If the points are in ``general'' position, small and localized perturbations of the metric along the geodesics make these constraints incompatible, which prevents the existence of a quadratic first-integral.

This argument is very general, it is not specific to Riemannian geometry. It only requires the following assumption on the $kN$ points:
\begin{description}
  \item[(H.1)] no three of the points $x_{1,1}, \dots, x_{k,N}$ lie on one normal geodesic;

  \item[(H.2)] for every sets $S_i \neq S_j$ and every point $x \in S_i$, there exists a 2-decisive set (see below) $p_1,\dots,p_N \in T_x^*B \simeq \R^n$ such that
      \[
      S_j=\{ \pi \circ e^{\vec{h}_g}(x,p_1),\dots, \pi \circ e^{\vec{h}_g}(x,p_N) \};
      \]

  \item[(H.3)] for every pair of sets $S_i \neq S_j$ and every pair of points $x \in S_i$ and $y \in S_j$, let $p \in T_{x}^*B \simeq \R^n$ be the covector such that $y = \pi \circ e^{\vec{h}_g}(x,p)$; then perturbations $\widetilde{g}$ of the metric $g$ localized near one point of the geodesic $\pi \circ e^{t\vec{h}_g}(x,p)$, $t\in (0,1)$, generate a neighborhood of $e^{\vec{h}_{g}}(x,p)$ in $T^*B$, i.e.\  the map
$$
\widetilde{g} \mapsto  e^{\vec{h}_{\widetilde{g}}}(x,p)
$$
is a submersion at $\widetilde{g}=g$.
\end{description}
As a consequence, if any sub-Riemannian metric $g$ admits $kN$ points satisfying \textbf{(H.1)}--\textbf{(H.3)}, then Proposition~\ref{le:KM} can be proved in the same way as \cite[Theorem 1]{Kruglikov-Matveev2015}. Thus we are reduced to proving the existence of such sets of points.

\begin{rk}
\label{rk:2decisif}
A set of $N=n(n+1)/2$ vectors of $\R^n$ is called \emph{2-decisive} if the values of any quadratic polynomial on this set determine the polynomial. Clearly, the set of 2-decisive sets is open and dense in the set of $N$-tuples of vectors of $\R^n$.
\end{rk}

Let us first study the perturbation property of \textbf{(H.3)}. We denote by $\mathcal{G}$ the set of sub-Riemannian $C^2$ metrics on $D$. Locally $\mathcal{G}$ can be identified with an open subset of the Banach space $\mathcal{S}$ of $C^2$ maps from $B$ to the set of symmetric $(m \times m)$ matrices.

\begin{Lemma}
\label{le:subm}
Let $g$ be a sub-Riemannian metric and $\lambda_0 \in T^*B$ be an ample covector with respect to $g$. Then the map
$$
\psi : \widetilde{g} \in \mathcal{G} \mapsto  e^{\vec{h}_{\widetilde{g}}}(\lambda_0) \in T^*B
$$
is a submersion at $\widetilde{g}=g$.
\end{Lemma}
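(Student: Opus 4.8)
The plan is to show that the differential $d\psi_g\colon\mathcal S\to T_{\psi(g)}(T^*B)$ is surjective; since the target is finite‑dimensional, $\psi$ is then automatically a submersion at $g$ (the kernel of $d\psi_g$ is closed of finite codimension, hence complemented). First I would fix the notation $\lambda(t)=e^{t\vec h_g}(\lambda_0)$, $\gamma(t)=\pi(\lambda(t))$ for $t\in[0,1]$; note $h_g(\lambda_0)\neq0$ because an ample geodesic is non‑constant, so $\gamma$ is an immersion near $t=0$, hence an embedding on $[0,\varepsilon]$ for some $\varepsilon>0$. Fix a $g$‑orthonormal frame $X_1,\dots,X_m$ of $D$, completed to a frame $\{X_1,\dots,X_n\}$ adapted to $D$ along $\gamma$, with the associated frame $\{Y_1,\dots,Y_n,\partial_{u_1},\dots,\partial_{u_n}\}$ of $T(T^*B)$ as in \eqref{eq:vech}. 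Writing $g_s=g+s\,\delta g$ for $\delta g\in\mathcal S$, the Hamiltonian varies as $h_{g_s}=h_g+s\,\delta h+O(s^2)$ with
\[
\delta h(q,p)=-\tfrac12\sum_{i,j=1}^m(\delta g)_{ij}(q)\,u_i(q,p)u_j(q,p),
\]
where $(\delta g)_{ij}$ is the matrix of $\delta g$ in the frame $X_1,\dots,X_m$ — an arbitrary $C^2$ symmetric‑matrix‑valued function on $B$ of arbitrary compact support, since the $X_i$ are $g$‑orthonormal. Applying the variation‑of‑constants (Duhamel) formula to the flow of $\vec h_{g_s}=\vec h_g+s\,\vec{\delta h}+O(s^2)$ gives the key identity
\[
d\psi_g(\delta g)=\int_0^1\bigl(e^{(1-t)\vec h_g}\bigr)_*\,\vec{\delta h}(\lambda(t))\,dt ,
\]
so surjectivity amounts to showing that these integrals fill $T_{\psi(g)}(T^*B)$ as $\delta g$ varies.

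Next I would determine which vectors $\vec{\delta h}(\lambda)$ can occur. From the form of a Hamiltonian vector field in the adapted frame, for $\delta h=\sum_{i,j\le m}b_{ij}(q)u_iu_j$ one has $\pi_*\vec{\delta h}\in D$, and the vertical component of $\vec{\delta h}(\lambda)$ equals $-\sum_{k=1}^n Y_k(\delta h)(\lambda)\,\partial_{u_k}$ plus terms carrying a factor $b_{ij}(\pi\lambda)$. Hence at a fixed covector $\lambda$, letting $b=(b_{ij})$ and its first derivatives at $\pi\lambda$ vary freely, the vectors $\vec{\delta h}(\lambda)$ span all of $J^{(1)}_{\lambda}=\mathcal V_\lambda\oplus\mathrm{span}\{Y_1(\lambda),\dots,Y_m(\lambda)\}$ (cf.\ Lemma~\ref{le:jacobi}): the horizontal part covers $\mathrm{span}\{Y_1(\lambda),\dots,Y_m(\lambda)\}$ because $(u_1,\dots,u_m)(\lambda)\neq0$ along $\gamma$ (as $h_g(\lambda(t))\equiv h_g(\lambda_0)\neq0$), and the vertical part covers $\mathcal V_\lambda$ by freely prescribing the $Y_k(\delta h)(\lambda)$. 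Using a partition of unity along the embedded sub‑arc $\gamma([t_0-\tau,t_0+\tau])$, $t_0\in(0,1)$, $\tau$ small, I would then produce, for any prescribed smooth field $t\mapsto W(t)\in\mathcal V_{\lambda(t)}$ supported in $(t_0-\tau,t_0+\tau)$, a perturbation $\delta g$ supported in a thin tube around this sub‑arc with $\vec{\delta h}(\lambda(t))=W(t)$ and $\vec{\delta h}(\lambda(t))=0$ outside. Letting $W$ concentrate at $t_0$, and using that $\mathrm{Image}(d\psi_g)$ is a finite‑dimensional, hence closed, subspace, I would conclude that $\mathrm{Image}(d\psi_g)$ contains $\bigl(e^{(1-t_0)\vec h_g}\bigr)_*\xi$ for every $t_0\in(0,1)$ and every $\xi\in\mathcal V_{\lambda(t_0)}$, i.e.
\[
\bigl(e^{-\vec h_g}\bigr)_*\mathrm{Image}(d\psi_g)\ \supseteq\ \Sigma:=\sum_{t\in(0,1)}\bigl(e^{-t\vec h_g}\bigr)_*\mathcal V_{\lambda(t)}=\sum_{t\in(0,1)}J_{\lambda_0}(t),
\]
where $J_{\lambda_0}(\cdot)$ is the Jacobi curve of $\gamma$. (If $\gamma$ is not injective on $[0,1]$, one first localizes near a point $\gamma(t_0)$ with $\gamma^{-1}(\gamma(t_0))=\{t_0\}$; this is a routine adjustment.)

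Finally I would invoke ampleness. The space $\Sigma$ is a linear subspace of the finite‑dimensional space $T_{\lambda_0}(T^*B)$, hence closed; and since for any smooth selection $l(t)\in J_{\lambda_0}(t)$ the vector $l(0)$ and all its derivatives $\tfrac{d^j}{dt^j}l(0)$ are limits of finite differences of the values $l(t)$, $t\in(0,\varepsilon)$, every extension $J^{(k)}_{\lambda_0}$ of the Jacobi curve is contained in $\Sigma$. As $\lambda_0$ is ample, $J^{(k_0)}_{\lambda_0}=T_{\lambda_0}(T^*B)$ for some $k_0$, whence $\Sigma=T_{\lambda_0}(T^*B)$; therefore $d\psi_g$ is onto and $\psi$ is a submersion at $\widetilde g=g$.

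I expect the main obstacle to be the realizability step of the second paragraph: constructing, along the embedded piece of the extremal, a perturbation $\delta g$ with a prescribed value $\vec{\delta h}(\lambda(t))=W(t)$ and no contribution elsewhere — this is where the adapted frame and the non‑degeneracy $h_g(\lambda(t))\neq0$ are used — together with the bookkeeping needed when the geodesic self‑intersects. Everything else is soft: the Duhamel formula, the finite‑codimension/complemented‑kernel remark, and the passage from the pointwise span of $\vec{\delta h}(\lambda(t))$ to the integral. The inclusion $\Sigma\supseteq J^{(k_0)}_{\lambda_0}$ combined with ampleness is the precise sub‑Riemannian substitute for the Jacobi‑field non‑degeneracy argument used in the Riemannian case of \cite{Kruglikov-Matveev2015}.
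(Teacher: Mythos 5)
Your overall strategy coincides with the paper's: the Duhamel formula for $d\psi_g$, the identification of the pointwise image of $\widetilde g\mapsto\frac{\partial \vec h_g}{\partial g}\cdot\widetilde g$ with $J^{(1)}_\lambda$, and the final step that the closed span of the Jacobi-curve subspaces $J_{\lambda_0}(t)$ contains every extension $J^{(k)}_{\lambda_0}$ (you argue in primal form via finite differences, the paper dually via an annihilating covector; these are equivalent in finite dimension). The genuine gap is in your realizability step, and it is not just a technicality to be checked: the construction you propose is impossible. If $\delta g$ is such that $\vec{\delta h}(\lambda(t))$ is vertical along the whole extremal (which is what ``$\vec{\delta h}(\lambda(t))=W(t)$ on the sub-arc and $=0$ outside'' demands), then $\pi_*\vec{\delta h}(\lambda(t))=0$ forces $\sum_j(\delta g)_{ij}(\gamma(t))u_j(\lambda(t))=0$ for all $i$, hence $\delta h(\lambda(t))\equiv 0$ along the extremal, hence
\begin{equation}
dh_g\bigl(\vec{\delta h}(\lambda(t))\bigr)=\vec{\delta h}(h_g)(\lambda(t))=-\vec h_g(\delta h)(\lambda(t))=-\frac{d}{dt}\,\delta h(\lambda(t))=0 .
\end{equation}
So the vertical fields you can realize in this way are confined to the hyperplane $\mathcal V_{\lambda(t)}\cap\ker dh_g$, never all of $\mathcal V_{\lambda(t)}$. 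Worse, since $h_g$ is invariant under its own flow, $e^{-t\vec h_g}_*\bigl(\mathcal V_{\lambda(t)}\cap\ker dh_g\bigr)\subset\ker dh_g(\lambda_0)$ for every $t$, so everything reachable by this route lies in the fixed hyperplane $\ker dh_g(\lambda_0)$ and your claimed inclusion $\mathrm{Image}(d\psi_g)\supseteq e^{(1-t_0)\vec h_g}_*\mathcal V_{\lambda(t_0)}$ cannot be obtained from purely vertical perturbations; the argument as written cannot conclude surjectivity.

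To close the gap you must use more than the vertical part of $J^{(1)}$: either the horizontal directions $\mathrm{span}\{Y_1,\dots,Y_m\}$, produced by perturbations with $\delta g(\gamma(t))u(\lambda(t))\neq 0$, or, equivalently, perturbations supported near an endpoint of the extremal, for which $dh_g\bigl(d\psi_g(\delta g)\bigr)=\delta h(\lambda_0)-\delta h(\lambda(1))$ can be made nonzero. The paper's proof avoids the obstruction by working with the full subspaces $e^{-s\vec h_g}_*J^{(1)}_{\lambda(s)}$ and a single annihilating covector: a $p$ annihilating the image must annihilate every $e^{-s\vec h_g}_*J^{(1)}_{\lambda(s)}$, hence every $J_{\lambda_0}(s)\subset e^{-s\vec h_g}_*J^{(1)}_{\lambda(s)}$, hence every derivative of a smooth selection, hence $J^{(k_0)}_{\lambda_0}=T_{\lambda_0}(T^*B)$. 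If you recast your localization in that dual form (or prove $\mathrm{Image}(d\psi_g)\supseteq e^{(1-t_0)\vec h_g}_*J^{(1)}_{\lambda(t_0)}$ using both components of $J^{(1)}$), your finite-difference/ampleness ending goes through unchanged. The remaining ingredients of your proposal --- the Duhamel identity, the formula for $\delta h$, the pointwise span $J^{(1)}_\lambda$, and the caveat about self-intersections --- are consistent with the paper.
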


\begin{proof}
From standard results on the dependance of differential equations with respect to a parameter, the differential of  $\psi$ at $g$ can be written as
\begin{equation*}
D_g \psi  : \widetilde{g} \in \mathcal{S} \mapsto {e_*^{\vec{h}_g}} \int_0^1 {e_*^{-s \vec{h}_g}} \left( \frac{\partial \vec{h}_g (\lambda(s))}{\partial g} (g) \cdot \widetilde{g}\right) ds,
\end{equation*}
where $\lambda(s) = e^{s \vec{h}_g} \lambda_0$, $s \in [0,1]$. Now, we can easily verify that, for a given $\lambda \in T^*B$, the image of the partial differential of $\vec{h}_g$ with respect to $g$ is
$$
\mathrm{Im} \left( \frac{\partial \vec{h}_g (\lambda)}{\partial g} (g)\right) = \left\{ v \in T_{\lambda}(T^*B) \ : \ \pi_* v \in D \right\} = J_\lambda^{(1)},
$$
where the last equality comes from Lemma~\ref{le:jacobi}. As a consequence, the image of the linear map $D_g \psi$ satisfies
$$
\mathrm{Im} D_g \psi = {e_*^{\vec{h}_g}} \int_0^1 {e_*^{-s \vec{h}_g}} J_{\lambda(s)}^{(1)} ds,
$$
and $\psi$ is a submersion at $g$ if and only if
\begin{equation}
\label{eq:spanJ1}
\mathrm{span} \left\{ {e_*^{-s \vec{h}_g}} J_{\lambda(s)}^{(1)} \ : \ s \in [0,1] \right\} = T_{\lambda_0}(T^*B).
\end{equation}

Assume by contradiction that \eqref{eq:spanJ1} does not hold. Then there exists $p \in T^*_{\lambda_0}(T^*B)$ such that $\langle p, {e_*^{-s \vec{h}_g}} J_{\lambda(s)}^{(1)} \rangle = 0$ for all $s \in [0,1]$. Note that $J_{\lambda_0}(s) \subset {e_*^{-s \vec{h}_g}} J_{\lambda(s)}^{(1)}$ (see Definition~\ref{def:jacobi}). Hence, for all smooth curve $l(\cdot)$ such that $l(s) \in J_{\lambda_0}(s)$ for all $s \in [0,1]$, we have $\langle p, l(s) \rangle \equiv 0$. Taking the derivatives with respect to $s$ at $0$, we get
$$
 \langle p, \frac{d^j}{dt^j}l(0) \rangle = 0 \quad \hbox{for all integer } j.
$$
From Definition~\ref{def:extension} this implies $\langle p, J_{\lambda_0}^{(k)} \rangle = 0$ for any integer $k$, which contradicts the fact that $\lambda_0$ is ample. Thus \eqref{eq:spanJ1} holds and $\psi$ is a submersion at $g$.
\end{proof}
As a direct consequence of this lemma, if \textbf{(H.2)} is satisfied with ample covectors $p_i$, then \textbf{(H.3)} is satisfied as well.

Let $x$ be a point in $B$ and $\exp_x$ be the exponential mapping at $x$, $\exp_x : p \in T^*_xB \to \pi \circ e^{\vec{h}_g} (x, p) \in B$. Since conjugate times are isolated from $0$ along a geodesic which is ample at $t=0$ (see for instance~\cite[Cor.\ 8.47]{ABB}), for any ample covector $p$ the map $\exp_x$ is locally open near $tp$ for $t$ small enough. Let us denote by $\mathcal{A}_x$ the set of $N$-tuples of ample covectors $(p_1,\dots,p_N)$ in $(T_x^*B)^N$ which are $2$-decisive, and set
$$
S(x) = \{ (\exp_x(p_1),\dots , \exp_x(p_N) ) \in B^N \ : \ (p_1,\dots,p_N) \in  \mathcal{A}_x \}.
$$
By Remark~\ref{rk:2decisif} and Theorem~\ref{th:Agr}, the set $\mathcal{A}_x$ is open and dense in $(T_x^*B)^N$. It results then from the local openness of the exponential map that $S(x)$ has a non empty interior with $(x, \dots,x)\in \overline{\mathrm{int}S(x)}$.
%

We are now in a position to give the construction of sets of $N$ points $S_\ell$, $\ell=1, \dots, k$, satisfying \textbf{(H.1)}--\textbf{(H.3)}. The properties above ensure that we can choose $S_1=\{x_{1,1}, \dots, x_{1,N}\} \in B^N$ such that no three points are aligned and such that the intersection
$$
\bigcap_{i=1}^N \mathrm{int} S(x_{1,i})
$$
is non empty. We then choose $S_2=\{x_{2,1}, \dots, x_{2,N}\}$ in this intersection such that no three points in $S_1 \cup S_2$ are aligned and such that the intersection of all sets $\mathrm{int}  S(x_{1,i}) \cap \mathrm{int} S(x_{2,i})$ is non empty. Iterating this construction we obtain $k$ sets of $N$ points satisfying \textbf{(H.1)}--\textbf{(H.3)}. This together with the argument in \cite{Kruglikov-Matveev2015} shows Proposition~\ref{le:KM} and then Proposition~\ref{le:quadratic}.

\end{document}